\documentclass[11pt,reqno]{amsart}

\usepackage[letterpaper, margin=1in]{geometry}
\usepackage{amsfonts}
\usepackage{amssymb}
\usepackage{amsmath}
\usepackage{mathrsfs}
\usepackage{color}
\usepackage[nospace,noadjust]{cite}
\usepackage{verbatim}
\usepackage{dsfont}
\usepackage{bbm}
\usepackage{enumerate}
\usepackage{graphicx}
\usepackage{xcolor} 
\usepackage{slashed}
\usepackage[titletoc,title]{appendix}
\usepackage{wrapfig}
\usepackage{marginnote}
\usepackage{bm}
\usepackage{enumitem}
\usepackage{hyperref}

\vfuzz2pt 
\hfuzz2pt 


\newcommand{\R}{{\mathbb R}}

\newcommand{\C}{{\mathbb C}}

\newcommand{\N}{{\mathbb N}}

\newcommand{\eps}{{\varepsilon}}

\newcommand{\vertiii}[1]{{\left\vert\kern-0.25ex\left\vert\kern-0.25ex\left\vert #1 
\right\vert\kern-0.25ex\right\vert\kern-0.25ex\right\vert}}

\DeclareMathOperator*{\supp}{supp}

\DeclareMathOperator*{\Span}{span}

\definecolor{deepgreen}{cmyk}{1,0,1,0.5}


\setlength{\marginparwidth}{2cm}

\newcommand{\Del}[1]{}

\numberwithin{equation}{section}

\newtheorem{theorem}{Theorem}[section]
\newtheorem{lemma}[theorem]{Lemma}
\newtheorem{proposition}[theorem]{Proposition}
\newtheorem{assumption}[theorem]{Assumption}
\newtheorem{remark}[theorem]{Remark}
\newtheorem{definition}[theorem]{Definition}

\newcommand{\mwhere}{{\ \ \text{where} \ \ }}
\newcommand{\mand}{{\ \ \text{and} \ \  }}

\newcommand{\mfor}{{\ \ \text{for} \ \ }}


\newcommand{\jap}[1]{\langle #1\rangle}

















\renewcommand{\hbar}{{\underline h}}






\newcommand{\calC}{\mathcal C}
\newcommand{\calD}{\mathcal D}
\newcommand{\calE}{\mathcal E}
\newcommand{\calF}{\mathcal F}
\newcommand{\calG}{\mathcal G}
\newcommand{\calH}{\mathcal H}

\newcommand{\calJ}{\mathcal J}

\newcommand{\calL}{\mathcal L}
\newcommand{\calM}{\mathcal M}
\newcommand{\calN}{\mathcal N}
\newcommand{\calO}{\mathcal O}

\newcommand{\calQ}{\mathcal Q}
\newcommand{\calR}{\mathcal R}
\newcommand{\calS}{\mathcal S}

\newcommand{\calV}{\mathcal V}
\newcommand{\calW}{\mathcal W}



\newcommand{\frakL}{\mathfrak L}




\newcommand{\wtilg}{{\widetilde{g}}}

\newcommand{\tilr}{{\tilde{r}}}

\newcommand{\wtilB}{{\widetilde{B}}}

\newcommand{\wtilM}{{\widetilde{M}}}


\newcommand{\wtilPhi}{{\widetilde{\Phi}}}




\newcommand{\hatf}{{\hat{f}}}




\newcommand{\checkf}{{\check{f}}}










\newcommand{\vecf}{{\vec f}}
\newcommand{\vecg}{{\vec g}}

\newcommand{\vecr}{{\vec r}}

\newcommand{\vecu}{{\vec u}}
\newcommand{\vecv}{{\vec v}}


\newcommand{\vecPsi}{\vec{\Psi}}



\newcommand{\ud}{\mathrm{d}}

\DeclareMathOperator{\sech}{sech}

\DeclareMathOperator{\spec}{spec}
\DeclareMathOperator{\Ran}{Ran}
\DeclareMathOperator{\Tr}{Tr}

\setcounter{tocdepth}{2}

\AtBeginDocument{
	\label{CorrectFirstPageLabel}
	
}

\begin{document}
\title{Dispersive estimates for 1D matrix Schr\"odinger operators with threshold resonance}
	\author{Yongming Li}
	\address{Department of Mathematics \\ Texas A\&M University \\ College Station, TX 77843, USA}
	\email{liyo0008@tamu.edu}
   \thanks{The author was partially supported by NSF grants DMS-1954707
	and DMS-2235233.}
	\begin{abstract}
We establish dispersive estimates and local decay estimates for the time evolution of non-self-adjoint matrix Schr\"odinger operators with threshold resonances in one space dimension. In particular, we show that the decay rates in the weighted setting are the same as  in the regular case after subtracting a finite rank operator corresponding to the threshold resonances. Such matrix Schr\"odinger operators naturally arise from linearizing a focusing nonlinear Schr\"odinger equation around a solitary wave. It is known that the linearized operator for the 1D focusing cubic NLS equation exhibits a threshold resonance. We also include an observation of a favorable structure in the quadratic nonlinearity of the evolution equation for perturbations of solitary waves of the 1D focusing cubic NLS equation. 
	\end{abstract}
\maketitle 
	\tableofcontents
%
\section{Introduction}
In this article, we establish dispersive estimates and local decay estimates  for the (non-self-adjoint) matrix Schr\"odinger operators
\begin{equation}\label{eqn: matrix operator H}
	\calH = {\calH}_0 + \calV = \begin{bmatrix}
		-\partial_x^2 + \mu & 0 \\ 0 & \partial_x^2 - \mu
	\end{bmatrix} + \begin{bmatrix}
		-V_1 & -V_2 \\ V_2 & V_1
	\end{bmatrix}\quad \text{on $L^2(\R) \times L^2(\R)$},
\end{equation}
where $\mu$ is a positive constant and $V_1$, $V_2$ are real-valued sufficiently decaying potentials. The operator $\calH$ is closed on the domain $D(\calH) = H^2(\R) \times H^2(\R)$. 

These matrix operators arise when linearizing a focusing nonlinear Schr\"odinger equation around a solitary wave. By our assumptions on $V_1$ and $V_2$, Weyl's criterion implies that the essential spectrum of $\calH$ is the same as that of $\calH_0$, given by $(-\infty,-\mu] \cup [\mu,\infty)$. As a core assumption in this paper, we suppose that the edges $\pm \mu$ of the essential spectrum are irregular in the sense of Definition~\ref{def: regular points}. This implies that there exist non-trivial bounded solutions to the equation $\calH  \vec\Psi_\pm = \pm \mu \vec\Psi_\pm$, see Lemma~\ref{lemma: threshold resonance characterization}. The dispersive estimates for $\calH$ when the thresholds $\pm \mu$ are regular have been obtained in Section 7-8 of the paper by Krieger-Schlag \cite{06KriegerSchlag}, building on the scattering theory developed by Buslaev-Perel'man \cite{95BusleavPerelman}. See also the recent work of Collot-Germain \cite{23CollotGermain}. Our proof is instead based on the unifying approach to resolvent expansions first initiated by Jensen-Nenciu \cite{00JensenNenciu}, and then further refined in Erdogan-Schlag \cite{06ErdoganSchlag} for matrix Schr\"odinger operators. We also adopt techniques from Erdogan-Green \cite{22ErdoganGreen}, where the authors prove  similar dispersive estimates for one-dimensional Dirac operators. 

\subsection{Motivation} Our interest in developing dispersive estimates for \eqref{eqn: matrix operator H} stems from the asymptotic stability problem for solitary wave solutions to nonlinear Schr\"odinger (NLS) equations. The NLS equation
\begin{equation}\label{eqn: NLS1}
	\begin{split}
		i\partial_t \psi + \partial_x^2 \psi + F(\vert \psi \vert^2)\psi &= 0, \mwhere \psi\colon \R_t \times \R_x \rightarrow \C,
	\end{split}
\end{equation}
appears in many important physical contexts such as the propagation of a laser beam, the envelope description of water waves in an ideal fluid, or the propagation of light waves in nonlinear optical fibers. See, e.g., Sulem-Sulem \cite{sulem2007nonlinear} for physics background. 

Under certain general conditions on the nonlinearity $F(\cdot)$ (see, e.g., \cite{83BerestyckiLions}),  the equation \eqref{eqn: NLS1} admits a parameterized family of localized, finite energy, traveling solitary waves of the form $\psi(t,x) = e^{it\alpha^2}\phi(x;\alpha)$, where $\phi(\cdot;\alpha)$ is a ground state, i.e., a positive, decaying, real-valued solution to the (nonlinear) elliptic equation
\begin{equation}
	- \partial_x^2 \phi + \alpha^2 \phi = F(  \phi^2)\phi.
\end{equation}
The existence and uniqueness of these ground state solutions are well-understood, see, e.g., \cite{83BerestyckiLions}, \cite{89Kwong}.

The solitary wave solutions (or simply, \textit{solitons}) are of importance due to the special role they play for the long-time dynamics of the Cauchy problem \eqref{eqn: NLS1}. Consequently, over the last few decades there has been a significant interest in the study of stability (or instability) of such solitary waves under small perturbations. The primary notion of stability is that of orbital stability, and it is by now well-understood for the NLS equation. The pioneering works in this direction were due to Cazenave-Lions \cite{82CazenaveLions}, Shatah-Strauss \cite{85ShatahStrauss}, and Weinstein \cite{85Weinstein}; see also \cite{87Grillakis} for the general theory. On the other hand, a stronger notion of stability is that of asymptotic stability. There are two general approaches for the asymptotic stability problem. The first approach is to use integrability techniques, when the underlying partial differential equation is completely integrable and inverse scattering is available. A second approach is perturbative, which means that one studies the dynamics of the nonlinear flow in the neighborhood of the solitary wave, on a restricted set of the initial data. Generally, one starts by decomposing the perturbed solution into a sum of a solitary wave and a dispersive remainder term. For the perturbative approach, dispersive estimates for the linear flow are key. 

Let us briefly describe the perturbative approach for the NLS equation. To keep our exposition short, we will not take into account any modulation aspects related to the Galilean invariance of the equation. For small $\alpha>0$, consider the perturbation ansatz $\psi(t,x) = e^{it\alpha^2}(\phi(x) + u(t,x))$ with the ground state $\phi(\cdot) = \phi(\cdot;\alpha)$ and the dispersive remainder term $u(t,x)$. The linearization of \eqref{eqn: NLS1} around the solitary wave $e^{it\alpha^2}\phi(x)$ then leads to the following nonlinear partial differential equation 
\begin{equation*}
	i \partial_t u = (- \partial_x^2  + \alpha^2 - V)u + W \overline{u}+ N,
\end{equation*}
where $N = N(\phi,u,\overline{u})$ is nonlinear in the variables $(u,\overline{u})$, and $V = F(\phi^2) + F'(\phi^2)\phi^2$ and $W = F'(\phi^2)\phi^2$ are real-valued potentials related to the ground state $\phi$. Equivalently, the above equation can be recast as a system for the vector $U := (u,\overline{u})^\top$, which is given by 
\begin{equation}\label{eqn: linearized NLS}
	i\partial_t U - \calH U = \calN, 
\end{equation}
where $\calN$ is a nonlinear term, and $\calH$ is a matrix Schr\"odinger operator of the form \eqref{eqn: matrix operator H} with the parameters $\mu = \alpha^2$, $V_1 = V$, and $V_2 = W$. 

For the study of asymptotic stability of solitary waves for NLS, it is thus crucial to fully understand the spectral properties of the matrix operator $\calH$ as well as to derive dispersive estimates for the linear evolution operator $e^{it\calH}$. One of the key steps in a perturbative analysis is to prove that the dispersive remainder \eqref{eqn: linearized NLS} decays to zero in a suitable topology. Let us consider for example, the 1D focusing NLS with a pure power nonlinearity, i.e.
\begin{equation}\label{eqn: NLS}
	i\partial_t \psi + \partial_x^2 \psi + \vert \psi \vert^{2\sigma}\psi = 0,\mwhere \sigma>0.
\end{equation}
The ground state $\phi(x;1)$ has an explicit formula for all $\sigma > 0$ given by
\begin{equation}
	\phi(x;1) = (\sigma + 1)^{\frac{1}{2\sigma}}\sech^{\frac{1}{\sigma}}(\sigma x),
\end{equation}
and the linearized operator around $e^{it}\phi(x;1)$ takes the form 
\begin{equation*}
	\calH_\sigma = \begin{bmatrix}
		-\partial_x^2 - (\sigma+1)^2\sech^2(\sigma x) + 1 & - \sigma(\sigma+1)\sech^2(\sigma x) \\ \sigma(\sigma+1)\sech^2(\sigma x) & \partial_x^2 + (\sigma+1)^2\sech^2(\sigma x) - 1
	\end{bmatrix}.
\end{equation*}
For monomial nonlinearities, we may obtain $\phi(x;\alpha)$ from rescaling by $\phi(x;\alpha) = \alpha^{\frac{1}{\sigma}}\phi(\alpha x,1)$. The matrix operators when linearizing around $e^{it\alpha^2}\phi(x;\alpha)$ are also equivalent to the matrix operator $\calH_\sigma$ by rescaling. The spectra for these matrix operators were investigated in \cite{07ChangGustafsonNakanishiTsai}; see also Section 9 of \cite{06KriegerSchlag}. For $\sigma \geq 2$, Krieger-Schlag \cite{06KriegerSchlag} were able to construct finite co-dimensional center-stable manifolds around the solitary waves and prove asymptotic stability using dispersive and Strichartz estimates developed for the evolution operator $e^{it\calH}$. However, for the completely integrable case ($\sigma =1$), it was shown in \cite{07ChangGustafsonNakanishiTsai} that the matrix operator $\calH_1$ exhibits the threshold resonance $\Psi(x) = \big(\tanh^2(x),-\sech^2(x) \big)^\top$ at $\lambda = 1$. The dispersive estimates developed in \cite{06KriegerSchlag} do not apply in this case. Furthermore, we note that a key assumption in the papers \cite{95BusleavPerelman}, \cite{05GangSigal}, \cite{06KriegerSchlag}, \cite{23CollotGermain} is that the linearized matrix operator $\calH$ does not possess threshold resonances at the edges of the essential spectrum. In these ``generic" (regular) cases, it can be shown that the evolution operator enjoy improved decay estimates in weighted spaces; see, e.g., Proposition~8.1 in \cite{06KriegerSchlag}. Thus, a meaningful motivation for this paper is to prove dispersive estimates in the presence of threshold resonances under some general spectral assumptions on the matrix operator $\calH$, which are applicable to the 1D cubic NLS case ($\sigma=1$). We will discuss this particular case briefly in Section~\ref{section: cubic linearized sect}.

\subsection{Main result} We are now in the position to state the main result of this paper. We begin by specifying some spectral assumptions on $\calH$. 
\begin{assumption} \label{assumption: spectral}\ 
	\begin{enumerate}
		\item [(A1)] $-\sigma_3 \calV$ is a positive matrix, where $\sigma_3$ is one of the Pauli matrices (c.f.~\eqref{eqn: pauli matrices}), \label{assumption (A1)}
		\item [(A2)]$L_- := -\partial_x^2 + \mu - V_1 + V_2$ is non-negative,\label{assumption (A2)}
		\item [(A3)] there exists $\beta>0$ such that $\vert V_1 (x) \vert + \vert V_2 (x) \vert \lesssim e^{-(\sqrt{2\mu}+\beta)\vert x \vert} $ for all $x \in \R$,\label{assumption (A3)}
		\item [(A4)] there are no embedded eigenvalues in $(-\infty,-
		\mu)\cup(\mu,\infty)$. \label{assumption (A4)}
	\end{enumerate}
\end{assumption}
Under these assumptions, we recall the general spectral theory for $\calH$ from \cite{06ErdoganSchlag}.\footnote{The results in Section 2 of \cite{06ErdoganSchlag} are stated for dimension 3, but they in fact hold for all dimensions. Moreover, only a polynomial decay on $V_1$ and $V_2$ is assumed in \cite{06ErdoganSchlag}. See also \cite[Theorem 1.3]{07HundertmarkLee}.}
\begin{lemma}\cite[Lemma 3]{06ErdoganSchlag}\label{lemma: 06ESLem3}
	Suppose Assumption \ref{assumption: spectral} holds. The essential spectrum of $\calH$ equals $(-\infty,-\mu] \cup [\mu,\infty)$. Moreover, 
	\begin{equation}\label{eqn: symmetry of H}
		\spec(\calH) = -\spec(\calH) = \overline{\spec(\calH)} = \spec(\calH^*),
	\end{equation}
and $\spec(\calH) \subset \R \cup i\R$. The discrete spectrum of $\calH$ consists of eigenvalues $\{z_j\}_{j=1}^N$, $0\leq N < \infty$, of finite multiplicity. For each $z_j \neq 0$, the algebraic and geometric multiplicities coincide and $\Ran(\calH-z_j)$ is closed. The zero eigenvalue has finite algebraic multiplicity, i.e., the generalized eigenspace $\cup_{k=1}^\infty \ker(\calH^k)$ has finite dimension. In fact, there is a finite $m \geq 1$ so that $\ker(\calH^k) = \ker(\calH^{k+1})$ for all $k \geq m$. 
\end{lemma}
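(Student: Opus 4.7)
The plan is to combine Weyl's theorem for the essential spectrum, explicit conjugation identities for the three spectral symmetries, a factorization identity for the inclusion $\spec(\calH)\subset\R\cup i\R$, and analytic Fredholm theory for the discrete part, in the spirit of the template for non-self-adjoint matrix Hamiltonians of NLS type established in \cite[Lemma 3]{06ErdoganSchlag}.

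First, the exponential decay in (A3) is far more than enough to make $\calV(\calH_0\pm i)^{-1}$ compact on $L^2(\R)\times L^2(\R)$: one approximates $\calV$ in operator norm by compactly supported multipliers and invokes Rellich--Kondrachov on each approximant. Hence $\calV$ is relatively compact with respect to $\calH_0$, and the version of Weyl's theorem valid for closed operators gives $\sigma_{\mathrm{ess}}(\calH)=\sigma_{\mathrm{ess}}(\calH_0)=(-\infty,-\mu]\cup[\mu,\infty)$, the latter equality being immediate from the Fourier representation of $\calH_0$. For the spectral symmetries, I would record conjugation identities with the Pauli matrices $\sigma_1=\bigl(\begin{smallmatrix}0&1\\1&0\end{smallmatrix}\bigr)$ and $\sigma_3=\bigl(\begin{smallmatrix}1&0\\0&-1\end{smallmatrix}\bigr)$: a short block computation shows $\sigma_1\calH\sigma_1=-\calH$ and $\sigma_3\calH\sigma_3=\calH^{\ast}$, while the real-valuedness of the entries of $\calH$ gives $\overline{\calH\vec\Psi}=\calH\overline{\vec\Psi}$. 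These three identities respectively produce $\spec(\calH)=-\spec(\calH)=\overline{\spec(\calH)}=\spec(\calH^{\ast})$.

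For $\spec(\calH)\subset\R\cup i\R$, the plan is to use the standard NLS factorization. The change of basis $(u,\bar u)\mapsto(u+\bar u,u-\bar u)$ conjugates $\calH$ to the block-off-diagonal operator $\bigl(\begin{smallmatrix}0&L_-\\L_+&0\end{smallmatrix}\bigr)$, where $L_\pm=-\partial_x^{2}+\mu-V_1\mp V_2$ are the usual self-adjoint Schr\"odinger operators; squaring shows that $\calH^{2}$ is similar to $\mathrm{diag}(L_-L_+,L_+L_-)$. An eigenvalue $z$ of $\calH$ therefore produces $z^{2}\in\spec(L_-L_+)$, and assumption (A2) allows forming $L_-^{1/2}$ and conjugating onto the self-adjoint operator $L_-^{1/2}L_+L_-^{1/2}$, with $\ker(L_-)$ treated as a separate case that forces $z=0$. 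Hence $z^{2}\in\R$ and $z\in\R\cup i\R$.

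Finally, the discrete-spectrum statements come from a Riesz-projection and analytic-Fredholm argument. Compactness of the resolvent difference implies that every point of $\spec(\calH)\setminus\sigma_{\mathrm{ess}}(\calH)$ is an isolated eigenvalue with finite-rank Riesz projection $P_j=-\frac{1}{2\pi i}\oint(\calH-\zeta)^{-1}\,d\zeta$, and hence of finite algebraic multiplicity; finiteness of their total number and exclusion of accumulation at $\pm\mu$ then follow from (A4) combined with an analytic Fredholm / Birman--Schwinger analysis of $I+\calV(\calH_0-\zeta)^{-1}$ in a neighborhood of the thresholds, with (A3) providing the analytic extension. For $z_j\neq 0$, the equality of algebraic and geometric multiplicities and the closedness of $\Ran(\calH-z_j)$ follow by a Jordan-chain argument using the indefinite pairing $[\cdot,\cdot]:=\langle\cdot,\sigma_3\cdot\rangle$, under which $\calH$ is symmetric since $\sigma_3\calH$ is self-adjoint. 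The main obstacle is the finite algebraic multiplicity at the zero eigenvalue: the plan there is to Laurent-expand $(\calH-\zeta)^{-1}$ at $\zeta=0$ via the Jensen--Nenciu / Erdogan--Schlag framework that is the technical heart of the paper, and to observe that the principal-part coefficients are finite-rank operators whose ranges are spanned by (generalized) bounded solutions of $\calH\vec\Psi=0$, so the pole has finite order and $\ker(\calH^{k})$ eventually stabilizes, yielding the claimed $m\geq 1$.
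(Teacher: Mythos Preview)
The paper does not supply its own proof of this lemma: it is quoted directly from \cite[Lemma~3]{06ErdoganSchlag}, and the only argument the paper adds is the remark immediately following the statement that the symmetry \eqref{eqn: symmetry of H} comes from the Pauli-matrix conjugation identities $\sigma_3\calH\sigma_3=\calH^{\ast}$ and $\sigma_1\calH\sigma_1=-\calH$. Your sketch is a faithful outline of the standard proof in the cited reference: Weyl's theorem for the essential spectrum, the Pauli conjugations and real-valuedness for the spectral symmetries, the $L_\pm$ factorization combined with (A2) for $\spec(\calH)\subset\R\cup i\R$, and analytic Fredholm theory together with the indefinite $\sigma_3$-pairing for the discrete spectrum.

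One small overcomplication: for the finite algebraic multiplicity at $0$ you invoke the Jensen--Nenciu / Erdo\u{g}an--Schlag threshold expansion, but in the present paper that machinery is developed at the edges $\pm\mu$, not at $0$. Since $0\in(-\mu,\mu)\subset\rho(\calH_0)$, the origin (if it is an eigenvalue at all) is an isolated point of the discrete spectrum, so the Riesz projection $P_0=-\tfrac{1}{2\pi i}\oint_{|\zeta|=\epsilon}(\calH-\zeta)^{-1}\,d\zeta$ is already finite rank by the same relative-compactness / analytic Fredholm argument you use for the nonzero $z_j$; this immediately gives $\bigcup_k\ker(\calH^k)=\Ran(P_0)$ finite-dimensional and the stabilization of the kernel chain, without any special threshold analysis.
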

The symmetry \eqref{eqn: symmetry of H} is due to the following commutation properties of $\calH$,
\begin{equation}\label{eqn: symmetry property}
	\calH^* = \sigma_3 \calH \sigma_3, \qquad -\calH =\sigma_1 \calH \sigma_1,
\end{equation}
with the Pauli matrices
\begin{equation}\label{eqn: pauli matrices}
	\sigma_1 = \begin{bmatrix}
		0 & 1 \\ 1 & 0
	\end{bmatrix},\quad \sigma_2 = \begin{bmatrix}
		0 & -i \\ i & 0
	\end{bmatrix},\quad\sigma_3 = \begin{bmatrix}
		1 & 0 \\ 0 & -1
	\end{bmatrix}.
\end{equation}
As a core assumption in this paper, we impose that the thresholds $\pm \mu$ of the essential spectrum are irregular.  
\begin{assumption}\label{assumption: threshold}\ 
	\begin{enumerate}
		\item [(A5)] The thresholds $\pm \mu$ are irregular in the sense of Definition~\ref{def: regular points}. This implies that there exist non-trivial  bounded solutions $\vec{\Psi}_\pm = (\Psi_1^\pm,\Psi_2^\pm)^\top$ to the equation $\calH \vec{\Psi}_\pm = \pm \mu \vec{\Psi}_\pm$. \label{assumption (A5)}
		\item [(A6)] The vanishing (bilateral)-Laplace transform condition holds \label{assumption (A6)}
		\begin{equation}
			\frakL[V_2\Psi_1^+ + V_1 \Psi_2^+](\pm \sqrt{2\mu}) = \int_{-\infty}^\infty e^{\mp \sqrt{2\mu}} (V_2 \Psi_1^+ + V_1 \Psi_2^+)(y)\, \ud y = 0.
		\end{equation}
	\end{enumerate}
\end{assumption}
For details about the characterization of the threshold functions $\vec{\Psi}$, we refer the reader to Definition~\ref{def: regular points} and Lemma~\ref{lemma: threshold resonance characterization} in Section 4. Due to the commutation identity \eqref{eqn: symmetry property}, we have the relation $\vec\Psi_+ = \sigma_1 \vec\Psi_-$. We emphasize that assumption (A6) is used to infer that (non-trivial) bounded solutions $\vec{\Psi}_\pm = (\Psi_1^\pm,\Psi_2^\pm)$ to the equation $\calH\vec\Psi_\pm = \pm \mu \vec\Psi_\pm$ satisfy $\Psi_1^+ = \Psi_2^- \in L^\infty(\R)\setminus L^2(\R)$. 

Let $P_\mathrm{d}\colon L^2(\R)\times L^2(\R) \to L^2(\R) \times L^2(\R)$ be the Riesz projection corresponding to the discrete spectrum of $\calH$, and define $P_{\mathrm{s}} := I - P_\mathrm{d}$. We now state the main theorem of this article.
\begin{theorem}\label{theorem: local decay estimate} Suppose assumptions (A1) -- (A6) hold, and let $\vec{\Psi} = (\Psi_1,\Psi_2)$ be the $L^\infty(\R)\times L^\infty(\R)\setminus L^2(\R) \times L^2(\R)$ distributional solution to
	\begin{equation}
		\calH\vec{\Psi} = \mu \vec{\Psi},
	\end{equation}
	with the normalization 
	\begin{equation}\label{eqn: normalization condition}
		\lim_{x \to \infty}\left( \vert \Psi_1(x)\vert^2 + \vert \Psi_1(-x) \vert^2 \right) = 2.
	\end{equation}
	Then, for any $\vec{f}=(f_1,f_2) \in \calS(\R) \times \calS(\R)$, we have
	\begin{enumerate}
		\item the unweighted dispersive estimate 
		\begin{equation}
			\left\Vert e^{it\calH}P_{\mathrm{s}} \vec{f}\, \right\Vert_{L^\infty(\R)\times L^\infty(\R)} \lesssim \vert t \vert^{-\frac{1}{2}} \left\Vert \vec{f} \, \right\Vert_{L^1(\R) \times L^1(\R)}, \quad \forall\, \vert t \vert \geq 1,
		\end{equation}
		\item and the weighted dispersive estimate
		\begin{equation}\label{eqn: theorem local decay estimate}
			\left\Vert \langle x \rangle^{-2} (e^{it\calH}P_{s} - F_t)\vec{f}\, \right\Vert_{L^\infty(\R)\times L^\infty(\R)} \lesssim \vert t \vert^{-\frac{3}{2}} \left\Vert \langle x \rangle^{2} \vec{f} \,\right\Vert_{L^1(\R) \times L^1(\R)}, \quad \forall \, \vert t\vert  \geq 1,
		\end{equation}
		where
		\begin{equation}\label{eqn: def of F_t}
			F_t\vec{f} := \frac{ e^{it\mu}}{\sqrt{-4 \pi i t}} \langle \sigma_3\vec{\Psi},\vec{f}\, \rangle \vec{\Psi} - \frac{e^{-it\mu}}{\sqrt{4\pi i t}}\langle \sigma_3 \sigma_1\vec{\Psi}, \vecf\, \rangle \sigma_1\vec{\Psi}.
		\end{equation}
	\end{enumerate}
\end{theorem}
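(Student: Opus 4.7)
The plan is to establish Theorem~\ref{theorem: local decay estimate} via a spectral representation using Stone's formula. Writing
\[
    e^{it\calH}P_{\mathrm{s}}\vec f = \frac{1}{2\pi i}\int_{\Sigma} e^{it\lambda}\bigl[\calR(\lambda+i0)-\calR(\lambda-i0)\bigr]\vec f\,d\lambda,\qquad \Sigma = (-\infty,-\mu]\cup[\mu,\infty),
\]
with $\calR(\lambda)=(\calH-\lambda)^{-1}$, the symmetry $-\calH=\sigma_1\calH\sigma_1$ from \eqref{eqn: symmetry property} gives $\calR(-\lambda)=-\sigma_1\calR(\lambda)\sigma_1$, allowing a reduction to the branch $\lambda\in[\mu,\infty)$; the contribution from $(-\infty,-\mu]$ then produces, via $\sigma_1$-conjugation, the second term in \eqref{eqn: def of F_t}. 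A smooth cutoff $\chi(\lambda)$ localizing near $\lambda=\mu$ splits the positive-spectrum integral into a low-energy part $I_{\mathrm{low}}(t)$ and a high-energy part $I_{\mathrm{high}}(t)$.

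For $I_{\mathrm{high}}(t)$, iterate the resolvent identity $\calR=\calR_0-\calR_0\calV\calR$ to obtain a finite Born series plus a remainder of the form $\calR_0\calV\calR\calV\calR_0$. The explicit one-dimensional free matrix resolvent has an exponentially decaying kernel away from the thresholds: after the substitution $\lambda=\mu+z^2$, the $(2,2)$-entry is smooth in $z$ while the $(1,1)$-entry has the familiar form $\tfrac{i}{2z}e^{iz|x-y|}$. Standard oscillatory-integral estimates combined with Agmon-type bounds on the middle factor $\calV\calR(\mu+z^2)\calV$, supplied by the exponential decay (A3) and the absence of embedded eigenvalues (A4), deliver the $|t|^{-1/2}$ bound in $L^1\to L^\infty$; integration by parts in $z$ --- with the resulting factors of $z^{-1}$ absorbed by two $\langle x\rangle$-weights via $|\partial_z e^{iz|x-y|}|\lesssim|x-y|$ --- upgrades this to $|t|^{-3/2}$ in the weighted setting of \eqref{eqn: theorem local decay estimate}.

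The heart of the argument is $I_{\mathrm{low}}(t)$, where the threshold resonance obstructs the naive Born-series inversion. Following the Jensen--Nenciu scheme \cite{00JensenNenciu}, as adapted to matrix Schr\"odinger operators by Erdogan--Schlag \cite{06ErdoganSchlag} and to Dirac operators by Erdogan--Green \cite{22ErdoganGreen}, I would symmetrize the operator $M(z):=I+\calV\calR_0(\mu+z^2)$ and perform a Feshbach--Grushin decomposition to isolate its finite-dimensional kernel at $z=0$. Assumption~(A5) guarantees this kernel is non-trivial and directly tied to the resonance $\vec\Psi$, while assumption~(A6) --- the vanishing Laplace transform of $V_2\Psi_1^+ + V_1\Psi_2^+$ at $\pm\sqrt{2\mu}$ --- forces $\Psi_2^+$ to remain in $L^\infty\setminus L^2$, thereby ruling out a threshold $L^2$-eigenvalue and pinning the kernel to be exactly rank one. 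Inverting the non-degenerate complement in a Neumann series then produces a Laurent expansion
\[
    \calR(\mu+z^2) \;=\; z^{-1}\calB_{-1} \;+\; \calB_0 \;+\; z\,\calB_1 \;+\; O(z^2) \mas z\to 0,
\]
where the commutation identity $\calH^*=\sigma_3\calH\sigma_3$ forces $\calB_{-1}$ to take the form $c\,\langle\sigma_3\vec\Psi,\cdot\,\rangle\vec\Psi$, and the normalization \eqref{eqn: normalization condition} fixes the constant $c$ explicitly. Inserting this expansion into Stone's formula and evaluating the Gaussian-type oscillatory integral $\int e^{itz^2}\chi(z)\,dz\sim\sqrt{\pi/(-it)}$ shows that the $z^{-1}\calB_{-1}$ piece reproduces exactly the leading term $\tfrac{e^{it\mu}}{\sqrt{-4\pi it}}\langle\sigma_3\vec\Psi,\vec f\,\rangle\vec\Psi$ subtracted off in $F_t$, while the $\calB_0$, $z\calB_1$, and $O(z^2)$ terms contribute only $O(|t|^{-3/2})$ in the weighted norm after one integration by parts in $z$. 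The main technical obstacle is the rigorous inversion of $M(z)$ near $z=0$ with sufficient regularity --- roughly $C^2$ in $z$ --- to justify the Laurent expansion and to identify the precise algebraic structure of $\calB_{-1}$; the exponential decay (A3) is precisely what provides the smoothness in $z$ needed at the threshold to extract the required remainder estimates.
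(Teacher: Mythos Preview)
Your outline matches the paper's proof in all its structural choices: Stone's formula, the $\sigma_1$-symmetry reduction to $[\mu,\infty)$, a low/intermediate/high energy split, Jensen--Nenciu inversion of a symmetrized $M(z)=I+v_2\calR_0 v_1$ at the threshold, and iterated resolvent identities away from it. Two points need correction or sharpening.

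First, the sentence ``the commutation identity $\calH^*=\sigma_3\calH\sigma_3$ forces $\calB_{-1}$ to take the form $c\langle\sigma_3\vec\Psi,\cdot\rangle\vec\Psi$'' is where the real work hides, and it is not a structural consequence of symmetry. The paper's expansion of $M(z)^{-1}$ has \emph{four} distinct leading finite-rank pieces ($z^{-1}S_1$, $PTS_1$, $S_1TP$, and $zP$), and the free resolvent contributes its own $z^{-1}$ term $\tfrac{i}{2z}\underline{e}_{11}$ that is \emph{not} the $\vec\Psi$-projection. The paper computes the leading $|t|^{-1/2}$ contribution of each piece separately (producing operators $F_t^0,\ldots,F_t^4$), keeping the oscillatory factors $e^{iz|x-x_1|}$ and using orthogonality to trade them for powers of $z$; only after an explicit algebraic recombination --- in which the free term $F_t^0$ cancels exactly against a piece of the $zP$-contribution, and the rest collapses via $\calG_0(v_1\Phi)=c_0\underline{e}_1-\vec\Psi$ together with the normalization $|c_0|^2+|c_1|^2=1$ --- does the sum reduce to $F_t^+$. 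Your Laurent-expansion framing is fine as a summary, but you should expect this identification of the residue to be the longest computation in the argument, not an afterthought.

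Second, a factual slip: assumption (A6) is used to show that $\Psi_1\notin L^2$ (it forces the limits $\lim_{x\to\pm\infty}\Psi_1(x)=c_0\mp c_1$ with $c_0,c_1$ not both zero), not $\Psi_2$. The lower component $\Psi_2$ is always bounded and in fact decays, since it is given by convolution with the exponentially decaying kernel $e^{-\sqrt{2\mu}|x-y|}$.
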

We proceed with some remarks on the main theorem:
 \begin{enumerate}
		\item The estimate \eqref{eqn: theorem local decay estimate} is an analogue of the weighted dispersive estimates obtained by Goldberg \cite{07Goldberg} for the scalar Sch\"odinger operator $H = -\partial_x^2 + V$ on the real line for non-generic potentials $V$; see \cite[Theorem~2]{07Goldberg}. The local decay estimate \eqref{eqn: theorem local decay estimate} shows that the bulk of the free wave $e^{it\calH}P_\mathrm{s}$ enjoys improved local decay at the integrable rate $\calO(\vert t \vert^{-\frac{3}{2}})$, and that the slow $\calO(\vert t \vert^{-\frac{1}{2}})$ local decay can be pinned down to the contribution of the finite rank operator $F_t$. Such sharp information can be useful for nonlinear asymptotic stability problems, see also Section~\ref{section: cubic linearized sect} below. 

		\item We make some comments on the spectral hypotheses. The assumptions (A1)--(A4) are known to be satisfied by the linearized operator around the solitary wave for the 1D focusing power-type NLS \eqref{eqn: NLS}. In the case of the 1D focusing cubic NLS ($\sigma = 1$),  the linearized operator $\calH_1$ satisfies the assumptions (A1)--(A6); see Section~\ref{section: A6 for cubic} below. More generally, in Lemma~\ref{lemma: threshold resonance characterization}, we show for matrix operators $\calH$ of the form \eqref{eqn: matrix operator H} satisfying assumptions (A1)--(A6) that the edges $\pm \mu$ of the essential spectrum of $\calH$ cannot be eigenvalues, and that the non-trivial bounded solutions $\vecPsi_\pm = (\Psi_1^\pm,\Psi_2^\pm)^\top$ to $\calH \vec\Psi_\pm = \pm \mu \vec \Psi_\pm$ belong to $L^\infty \setminus L^2$ since $\Psi_1(x)$ has a non-zero limit as $x \to \pm \infty$. In this sense, we characterize the solutions $\vec\Psi_\pm$ as threshold resonances. However, it is not yet clear to the author whether  assumption (A6) is strictly needed to show that non-trivial bounded solutions $\vec\Psi_\pm$ to $\calH\vec\Psi_\pm = \pm \mu \vec\Psi_\pm$ cannot be eigenfunctions. Moreover, an inspection of the proof of Lemma~\ref{lemma: threshold resonance characterization} reveals that the strong exponential decay assumption (A3) and the vanishing condition assumption (A6) are only used in a Volterra integral equation argument. In all other proofs, we only use some polynomial decay  of the potentials $V_1$ and $V_2$.

		\item It might be possible to prove  Theorem~\ref{theorem: local decay estimate} using the scattering theory developed by \cite{95BusleavPerelman}. However, one major difficulty for this approach is due to the fact that the matrix Wronskian associated with the vector Jost solutions is not invertible at the origin for  cases where the matrix operators $\calH$ exhibit threshold resonances. Hence, the vector-valued distorted Fourier basis functions are not immediately well-defined at zero frequency. See Corollary~5.21 and Section~6 in \cite{06KriegerSchlag} for further details.
\end{enumerate}  

\subsection{Previous works}
In this subsection, we collect references related to dispersive estimates for Schr\"odinger operators and to the study of the stability of solitary waves.

For dispersive estimates for the matrix Schr\"odinger operator $\calH$, we refer to Section~5-9 of \cite{06KriegerSchlag} in dimension 1, and to \cite{06ErdoganSchlag, 09Marzuola, 12Green, 13ErdoganGreen, 17Toprak} in higher dimensions. A comprehensive study on the spectral theory for the matrix operator arising from pure-power type NLS is given in \cite{07ChangGustafsonNakanishiTsai}. See also \cite{10Vougalter, 11CostinHuangSchlag, 20Masaki} for related analytical and numerical studies. For dispersive estimates for the scalar Schr\"odinger operators, pioneering works include \cite{82Murata, 91Journe, 00Weder}, and we refer to \cite{04ErdoganSchlag, 04GoldbergSchlag, 05Schlag, 06Schlag, 07Goldberg, 10Goldberg, 11Mizutani, 12Green, 14ErdoganGoldbergGreen, 15GoldbergGreen, 16Beceanu, 17GoldbergGreen}
for a sample of recent works. Finally, we mention the papers \cite{bolle1985complete, 00JensenNenciu} on resolvent expansions for the scalar Schr\"odinger operator. 

On the general well-posedness theory for the NLS Cauchy problem \eqref{eqn: NLS1}, we refer to the pioneering works \cite{79GinibreVelo, 87Kato, 87Tsutsumi}. Results on the orbital stability (or instability) of solitary waves for the NLS equation were first obtained by \cite{81BerestckiCazenave, 82CazenaveLions, 85ShatahStrauss, 85Weinstein, 86Weinstein}, and a general theory was established in \cite{87Grillakis}. Subsequent developments for general nonlinearities were due to \cite{88Grillakis, 90Grillakis, 95Ohta, 03ComechPelinovsky, 08Maeda}. Regarding the asymptotic stability of solitary waves, the first results were due to Buslaev-Perel'man \cite{92BuslaevPerelman, 95BusleavPerelman}. Subsequent works in this direction were due to \cite{05GangSigal, 06KriegerSchlag, 08Beceanu, 09Schlag, 14Cuccagna, 21Martel, 23CollotGermain}. For  surveys on the stability of solitary waves, we refer to the reviews \cite{17KMM, 21CuccagnaMaeda} and the monographs \cite{cazenave2003semilinear, sulem2007nonlinear}.

\subsection{On the solitary wave for the 1D focusing cubic NLS}\label{section: cubic linearized sect}
In this subsection, we present two observations related to the asymptotic stability problem for the solitary wave of the 1D focusing cubic NLS. First, we verify that the assumption (A6) holds for the linearized operator around the solitary wave of the 1D focusing cubic NLS. Second, we use the local decay estimate \eqref{eqn: theorem local decay estimate} to shed some light on the leading order structure of the quadratic nonlinearity in the perturbation equation for the solitary wave of the 1D focusing cubic NLS. 

We note that a proof for the asymptotic stability problem has been given by Cuccagna-Pelinovsky \cite{14CuccagnaPelinovsky} via inverse scattering techniques. On the other hand, a perturbative proof that does not explicitly rely on the integrable structure has not yet appeared in the literature to the best of the author's knowledge. We now briefly discuss the evolution equation for perturbations of the solitary wave for the 1D focusing cubic NLS. To keep our exposition short, we do not discuss the modulation aspects for the solitary wave. For simplicity, consider the perturbation ansatz 
$$\psi(t,x) = e^{it}(Q(x)+u(t,x))$$ 
for the equation \eqref{eqn: NLS} ($\sigma = 1$). The ground state has the explicit formula
\begin{equation*}
	Q(x) := \phi(x;1) = \sqrt{2}\sech(x).
\end{equation*}
The evolution equation for the perturbation in vector form $\vec{u} = (u_1,u_2) :=(u,\bar{u})$ is given by 
\begin{equation}\label{eqn: linearized cubic NLS in complex-conjugate coordinates}
	\begin{split}
	&i \partial_t \vec{u} - \mathcal{H}_1\vec{u} = \calQ(\vecu) + \calC(\vec u),		
	\end{split}
\end{equation}
where 
\begin{equation}
	\calH_1 = \mathcal{H}_0 + \mathcal{V}_1 = \begin{bmatrix}-\partial_x^2 + 1 & 0 \\ 0 & \partial_x^2 - 1\end{bmatrix} + \begin{bmatrix}
		-4\sech^2(x) & - 2\sech^2(x) \\ 2\sech^2(x) & 4\sech^2(x)
	\end{bmatrix}, 
\end{equation}
and
\begin{equation}
	\calQ(\vecu) := \begin{bmatrix}
		- Qu_1^2 -  2Qu_1u_2 \\ Qu_2^2 + 2Qu_1u_2
	\end{bmatrix}, \mand \calC(\vecu) :=  \begin{bmatrix}
		- u_1^2u_2 \\  u_1u_2^2
	\end{bmatrix}.
\end{equation}
Recall from \cite{07ChangGustafsonNakanishiTsai} that the matrix operator $\calH_1$ has the essential spectrum $(-\infty,-1]\cup[1,\infty)$, and a four-dimensional generalized nullspace
\begin{equation}\label{eqn: generalized kernel}
	\calN_{\mathrm{g}}(\calH_1) = \Span\left\{\begin{bmatrix}Q \\ - Q\end{bmatrix}, \begin{bmatrix}
		(1+x\partial_x)Q \\ (1+x\partial_x)Q
	\end{bmatrix}, \begin{bmatrix}
	\partial_x Q \\ \partial_x Q
\end{bmatrix}, \begin{bmatrix}
x Q \\ - xQ
\end{bmatrix}\right\},
\end{equation} 
as well as a threshold resonance at $+1$ given by 
\begin{equation}
	\vec\Psi \equiv \vec\Psi_+ := \begin{bmatrix}
		\Psi_1 \\ \Psi_2
	\end{bmatrix} =  \begin{bmatrix}
		1-\frac{1}{2}Q^2\\ -\frac{1}{2}Q^2
	\end{bmatrix} = \begin{bmatrix}
	\tanh^2(x) \\ -\sech^2(x)
\end{bmatrix}.
\end{equation}
By symmetry, there is also a  threshold resonance function at $-1$ given by 
\begin{equation}
	\vec\Psi_- = \sigma_1\vec\Psi_+ = \begin{bmatrix}
		-\sech^2(x) \\ \tanh^2(x)
	\end{bmatrix}.
\end{equation}
The eigenfunctions listed in \eqref{eqn: generalized kernel} are related to the underlying symmetries for the NLS equation. Note that we have normalized the resonance function $\vec\Psi$ to satisfy the condition \eqref{eqn: normalization condition} stated in Theorem~\ref{theorem: local decay estimate}. 
\subsubsection{On assumption (A6) for the 1D focusing cubic NLS}\label{section: A6 for cubic}
Our first observation is that the assumption (A6) is satisfied by the matrix operator $\calH_1$.
\begin{lemma}\label{lemma: vanishing laplace transform} Let $V_1(x) = 4\sech^2(x)$, $V_2(x) = 2\sech^2(x)$, and $(\Psi_1(x),\Psi_2(x)) = (\tanh^2(x),-\sech^2(x))$. Then, we have
	\begin{equation}\label{eqn: cubic nls c_2}
		\int_{\R} e^{\pm \sqrt{2}y} \big(V_2(y) \Psi_1(y) + V_1(y)\Psi_2(y)\big) \, \ud y = 0.
	\end{equation}
	\begin{proof}
	We denote the (two-sided) Laplace transform by
	\begin{equation}
		\frakL[f](s) = \int_{-\infty}^\infty e^{-sy}f(y) \,\ud y, \quad s \in \C,
	\end{equation}
	which is formally related to the Fourier transform by 
	\begin{equation*}
		\frakL[f](s) = \sqrt{2\pi} \calF[f](is).
	\end{equation*}
	By direct computation, 
	\begin{equation*}
		(V_1\Psi_2+V_2\Psi_1)(x)  = 2\sech^2(x) - 6 \sech^4(x),
	\end{equation*}
	and
	\begin{equation}\label{identity: sech4}
		\sech^4(x) = \frac{2}{3}\sech^2(x)-\frac{1}{6}\partial_x^2(\sech^2(x)).
	\end{equation}
	Recall from \cite[Corollary 5.7]{21LS} that as equalities in $\calS(\R)$,
	\begin{equation}
		\calF[\sech^2](\xi) = \sqrt{\frac{\pi}{2}}\frac{\xi}{\sinh(\tfrac{\pi}{2}\xi)}.
	\end{equation}
	Hence, using the basic property $\calF[-\partial_x^2 f](\xi) = \xi^2 \calF[f](\xi)$ and \eqref{identity: sech4}, we obtain  
	\begin{equation}
		\calF[\sech^4](\xi) = \frac{1}{6}\sqrt{\frac{\pi}{2}} \frac{\xi(4+\xi^2)}{\sinh(\tfrac{\pi}{2}\xi)}.
	\end{equation}
	As complex functions, we recall that $\sinh(iz) = i \sin(z)$ and that $z \mapsto \frac{z}{\sin(z)}$ is analytic\footnote{to be pedantic, there is a removable singularity at $z=0$ which we can remove by setting the function $\frac{z}{\sin(z)}$ equal to $1$ at $z=0$.} in the strip $\{s+i\sigma: s \in (-\pi,\pi), \,\sigma \in \R\}$. Thus, by analytic continuation,
	\begin{equation*}
		\frakL[V_1 \Psi_2 + V_2 \Psi_1](s) = \sqrt{2\pi}\left(2\, \calF[\sech^2](is) - 6 \calF[\sech^4](is) \right) =  \frac{\pi s(-2+s^2)}{\sin(\tfrac{\pi s}{2})},
	\end{equation*}	
	for any $s \in \C$ with $\Re (s) \in (-2,2)$, which in particular proves the vanishing condition \eqref{eqn: cubic nls c_2}.
	\end{proof}
\end{lemma}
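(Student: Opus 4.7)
The plan is to first simplify the integrand and then reduce the claim to an algebraic identity after Laplace transform. Using $\tanh^2 = 1 - \sech^2$ together with $V_1 = 4\sech^2$, $V_2 = 2\sech^2$, $\Psi_2 = -\sech^2$, I would first compute
\[ V_2\Psi_1 + V_1\Psi_2 = 2\sech^2(1-\sech^2) - 4\sech^4 = 2\sech^2 - 6\sech^4, \]
so the task becomes showing that the two-sided Laplace transform $\frakL[\,2\sech^2 - 6\sech^4\,](s)$ vanishes at $s = \pm\sqrt{2}$.

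The key idea is to extract an explicit factor of $(s^2 - 2)$ by exploiting a second-order ODE satisfied by $\sech^2$. A direct differentiation gives $(\sech^2)'' = 4\sech^2 - 6\sech^4$, hence
\[ 2\sech^2(x) - 6\sech^4(x) = (\sech^2)''(x) - 2\sech^2(x). \]
Since the Laplace transform intertwines $\partial_x^2$ with multiplication by $s^2$ (after an integration by parts which is justified by the rapid decay of $\sech^2$ and its derivatives), this yields
\[ \frakL[\,2\sech^2 - 6\sech^4\,](s) = (s^2 - 2)\,\frakL[\sech^2](s), \]
which vanishes at $s = \pm\sqrt{2}$ provided $\frakL[\sech^2](\pm\sqrt{2})$ is finite.

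The remaining step is to verify this finiteness, which I expect to be the only mildly nontrivial point. Since $\sech^2(y)$ decays like $e^{-2|y|}$, the Laplace transform $\frakL[\sech^2](s)$ converges absolutely in the strip $\{|\Re s| < 2\}$, which comfortably contains $\pm\sqrt{2}$; alternatively one may analytically continue the classical identity $\calF[\sech^2](\xi) = \sqrt{\pi/2}\,\xi/\sinh(\pi\xi/2)$ via $s = i\xi$ to obtain the explicit meromorphic representative $\frakL[\sech^2](s) = \pi s/\sin(\pi s/2)$ on the same strip. Either way the value at $\pm\sqrt{2}$ is finite, so the proof reduces entirely to the algebraic factorization above, and the main potential obstacle — making sure no pole of $\frakL[\sech^2]$ coincides with $\pm\sqrt{2}$ — is resolved by the strip of absolute convergence.
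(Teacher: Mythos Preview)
Your proof is correct and follows essentially the same route as the paper: simplify the integrand to $2\sech^2 - 6\sech^4$, use the differential identity $(\sech^2)'' = 4\sech^2 - 6\sech^4$ (equivalently the paper's $\sech^4 = \tfrac{2}{3}\sech^2 - \tfrac{1}{6}(\sech^2)''$), and pass to the Laplace transform on the strip $\lvert \Re s\rvert < 2$ to extract the factor $s^2 - 2$. Your organization is slightly cleaner in that you write the integrand directly as $(\partial_x^2 - 2)\sech^2$, so the vanishing at $s=\pm\sqrt{2}$ is immediate from the factor $(s^2-2)$ and you only need finiteness of $\frakL[\sech^2](\pm\sqrt{2})$ rather than its explicit value; the paper instead computes $\calF[\sech^4]$ separately and then combines, arriving at the same closed form $\pi s(s^2-2)/\sin(\pi s/2)$.
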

The other assumptions (A1)--(A5) for $\calH_1$ are also satisfied by either checking directly or invoking the results from Section~9 in \cite{06KriegerSchlag}. 

\subsubsection{Null structure for perturbations of the solitary wave of the 1D focusing cubic NLS}
Due to the slow local decay of the Schr\"odinger waves in the presence of a threshold resonance, the spatially localized quadratic nonlinearity in  \eqref{eqn: linearized cubic NLS in complex-conjugate coordinates} may pose significant difficulties for proving decay of small solutions to \eqref{eqn: linearized cubic NLS in complex-conjugate coordinates}. The weighted dispersive estimate \eqref{eqn: theorem local decay estimate} shows that the slow local decay is only due to the finite rank projection $F_t$. To shed some light on the expected leading order behavior of the quadratic nonlinearity $\calQ(\vecu)$ in \eqref{eqn: linearized cubic NLS in complex-conjugate coordinates}, it is instructive to insert a free Schr\"odinger wave 
\begin{equation*}
	\vecu_\mathrm{free}(t) := e^{-it\calH}P_\mathrm{s}\vecf,
\end{equation*}
for some fixed $\vecf \in \calS(\R) \times \calS(\R)$. By Theorem~\ref{theorem: local decay estimate}, we have
\begin{equation}\label{eqn: ufree}
	\vecu_\mathrm{free}(t) = c_- \frac{e^{-it}}{\sqrt{t}}\begin{bmatrix}
		\Psi_1 \\ \Psi_2
	\end{bmatrix} + c_+ \frac{e^{it}}{\sqrt{t}} \begin{bmatrix}
		\Psi_2 \\ \Psi_1
	\end{bmatrix} + \vecr(t),
\end{equation}
with
\begin{equation}
	c_- = \frac{1}{\sqrt{-4 \pi i}} \langle \sigma_3 \vec\Psi,\vecf\,\rangle, \quad c_+ = -\frac{1}{\sqrt{4\pi i}} \langle \sigma_3 \sigma_1 \vec\Psi,\vecf \,\rangle,
\end{equation}
and where the remainder $\vecr(t)$ satisfies
\begin{equation}
	\left \Vert \langle x \rangle^{-2} \vecr(t) \right \Vert_{L_x^\infty(\R) \times L_x^\infty(\R)} \lesssim \vert t \vert^{-\frac{3}{2}} \left \Vert \langle x \rangle^2 \vecf\, \right \Vert_{L_x^1(\R) \times L_x^1(\R)}.
\end{equation}
Thus, owing to the spatial localization of the quadratic nonlinearity, we have 
\begin{equation}\label{eqn: quadratic resonant term}
	 \calQ(\vecu_\mathrm{free}(t)) = \frac{c_+^2e^{2it}}{t} \mathcal{Q}_{1}(\vec{\Psi}) + \frac{c_+c_-}{t} \mathcal{Q}_{2}(\vec{\Psi}) + \frac{c_-^2e^{-2it}}{t} \mathcal{Q}_{3}(\vec{\Psi}) + \calO_{L^\infty}(\vert t \vert^{-2}),
\end{equation}
where
\begin{align}
	\mathcal{Q}_{1}(\vec{\Psi}) &=  \begin{bmatrix}
		-Q\Psi_2^2 - 2Q\Psi_1\Psi_2 \\ Q \Psi_1^2 + 2Q\Psi_1\Psi_2 
	\end{bmatrix},\\
\mathcal{Q}_{2}(\vec{\Psi}) &= \begin{bmatrix}
	- 2Q\Psi_1\Psi_2 - 2Q(\Psi_1^2+\Psi_2^2)\\
	2Q\Psi_1\Psi_2 + 2Q(\Psi_1^2+\Psi_2^2)
\end{bmatrix},\\
	\mathcal{Q}_{3}(\vec{\Psi}) &= -\sigma_1\calQ_1(\vec{\Psi}) = \begin{bmatrix}
	-Q\Psi_1^2 - 2Q\Psi_1\Psi_2 \\ Q \Psi_2^2 + 2Q\Psi_1\Psi_2 
\end{bmatrix}\label{calQ_3}.
\end{align}
Due to the critical $\calO(\vert t \vert^{-1})$ decay of the leading order terms on the right-hand side of \eqref{eqn: quadratic resonant term}, it is instructive to analyze the long-time behavior of small solutions to the inhomogeneous matrix Schr\"odinger equation with such a source term 
\begin{equation}\label{eqn: inhomg eqn}\left\{
	\begin{aligned}
		i\partial_t \vecu_{\mathrm{src}}- \calH_1 \vecu_{\mathrm{src}} &= P_\mathrm{s}\left(\frac{c_+^2e^{2it}}{t} \mathcal{Q}_{1}(\vec{\Psi}) + \frac{c_+c_-}{t} \mathcal{Q}_{2}(\vec{\Psi}) + \frac{c_-^2e^{-2it}}{t} \mathcal{Q}_{3}(\vec{\Psi}) \right), \quad t \geq 1,\\
		\vecu_{\mathrm{src}}(1) &= \vec0.
	\end{aligned}\right.
\end{equation}
To this end, it will be useful to exploit a special conjugation identity for the matrix Schr\"odinger operator $\calH_1$. It was recently pointed out by Martel, see \cite[Section~2.3]{21Martel}, that the matrix operator $\calH_1$ can be conjugated to the flat matrix Schr\"odinger operator $\calH_0$. By first conjugating $\calH_1$ with the unitary matrix $\calJ= \frac{1}{\sqrt{2}} \begin{bmatrix}
	1 & i \\ 1 & - i
\end{bmatrix}$,
we obtain the equivalent matrix Schr\"odinger operator
\begin{equation*}
	\calL_1 = -i \calJ^{-1}\calH_1 \calJ := \begin{bmatrix}
		0 & L_- \\ -L_+ & 0
	\end{bmatrix} = \calL_0 + \calW := \begin{bmatrix}
	0 & -\partial_x^2 + 1 \\ \partial_x^2 -1 & 0 
\end{bmatrix} + \begin{bmatrix}
0 & - 2\sech^2(x) \\ 6\sech^2(x) & 0
\end{bmatrix}.
\end{equation*}
Introducing the operator
\begin{equation}
	\calD := \begin{bmatrix}
		0 & (-\partial_x^2+1)S^2 \\ -S^2L_+ & 0
	\end{bmatrix}, \mwhere S := Q \cdot \partial_x \cdot Q^{-1} = \partial_x + \tanh(x),
\end{equation}
one has the conjugation identity (see also \cite[Section~3.4]{07ChangGustafsonNakanishiTsai})
\begin{equation}
	\calD \calL_1 = \calL_0 \calD.
\end{equation}
We then transfer the above identity to the matrix operator $\calH$ by setting $\widetilde\calD := \calJ \calD \calJ^{-1}$ to obtain the conjugation identity 
\begin{equation}\label{eqn: mirror conjugate identity}
	\widetilde\calD \calH_1 = \calH_0 \widetilde\calD.
\end{equation}
Moreover, it can be checked directly that $\widetilde\calD \vec\eta = 0$ for any generalized eigenfunction $\vec\eta \in \calN_\mathrm{g}(\calH_1)$, and this implies that $\widetilde\calD P_\mathrm{d} \equiv 0$, which is equivalent to saying that $\widetilde\calD = \widetilde\calD P_\mathrm{s}$. Hence, by applying the transformation $\widetilde\calD$ to the equation \eqref{eqn: inhomg eqn}, we obtain the transformed equation
\begin{equation}
	i\partial_t \vecv_{\mathrm{src}} - \calH_0 \vecv_{\mathrm{src}} = \widetilde\calD\left(\frac{c_+^2e^{2it}}{t} \mathcal{Q}_{1}(\vec{\Psi}) + \frac{c_+c_-}{t} \mathcal{Q}_{2}(\vec{\Psi}) + \frac{c_-^2e^{-2it}}{t} \mathcal{Q}_{3}(\vec{\Psi}) \right),
\end{equation}
where $\vecv_{\mathrm{src}} :=   \widetilde\calD \vecu_\mathrm{src}$ is the transformed variable. Note that the above equation features the flat operator $\calH_0$ on the left. The Duhamel formula for $\vecv_{\mathrm{src}}(t)$ at times $t \geq 1$ reads
\begin{equation}
	\vecv_{\mathrm{src}}(t) =  -i\int_1^t e^{-i(t-s)\calH_0}\widetilde\calD\left(\frac{c_+^2e^{2is}}{s} \mathcal{Q}_{1}(\vec{\Psi}) + \frac{c_+c_-}{s} \mathcal{Q}_{2}(\vec{\Psi}) + \frac{c_-^2e^{-2is}}{s} \mathcal{Q}_{3}(\vec{\Psi})\right) \,\ud s.
\end{equation}

The flat, self-adjoint, matrix operator $\calH_0$ has the benefit that the semigroup $e^{-it\calH_0}$ can be represented in terms of the standard Fourier transform by the formula
\begin{equation}
	\left(e^{-it\calH_0}\vecg\right) (x) = \frac{1}{\sqrt{2\pi}} \int_{\R} e^{-it(\xi^2+1)}\widehat{g_1}(\xi)e^{ix\xi}\,\ud \xi\,  \underline{e}_1 + \frac{1}{\sqrt{2\pi}} \int_{\R} e^{it(\xi^2+1)}\widehat{g_2}(\xi)e^{ix\xi}\,\ud\xi\,  \underline{e}_2,
\end{equation}
where $\vecg = (g_1,g_2)^\top$ and $\underline{e}_1,\underline{e}_2$ are the standard unit vectors in $\R^2$. The profile of $\vecv_{\mathrm{src}}(t)$ is given by
\begin{equation}
	\vecf_{\mathrm{src}}(t) := e^{it\calH_0}\vecv_{\mathrm{src}}(t).
\end{equation}
Setting $$\widetilde{\calD}\calQ_j(\vec{\Psi}) =: (G_{j,1},G_{j,2})^\top \mfor 1\leq j \leq 3,$$
we have for times $t \geq 1$ that 
\begin{equation}
	\begin{split}
		&\mathcal{F}[\vecf_\mathrm{src}(t)](\xi)\\
		&=c_+^2 \int_1^t  \frac{e^{is(\xi^2+3)}}{s} \widehat{G_{1,1}}(\xi) \,\ud s \,\underline{e}_1 + c_+c_- \int_1^t  \frac{e^{is(\xi^2+1)}}{s} \widehat{G_{2,1}}(\xi) \,\ud s \,\underline{e}_1 + c_-^2 \int_1^t  \frac{e^{is(\xi^2-1)}}{s} \widehat{G_{3,1}}(\xi)  \,\ud s \,\underline{e}_1\\
		&\quad +  c_+^2 \int_1^t \frac{e^{-is(\xi^2-1)}}{s} \widehat{G_{1,2}}(\xi)  \,\ud s \,\underline{e}_2  +  c_+c_-  \int_1^t  \frac{e^{-is(\xi^2+1)}}{s} \widehat{G_{2,2}}(\xi)   \,\ud s\, \underline{e}_2 + c_-^2  \int_1^t  \frac{e^{-is(\xi^2+3)}}{s} \widehat{G_{3,2}}(\xi)  \,\ud s \, \underline{e}_2.
	\end{split}
\end{equation}
The uniform-in-time boundedness in $L_\xi^\infty$ of the Fourier transform of the profile $\mathcal{F}[\vecf_\mathrm{src}(t)](\xi)$ is related to recovering the free decay rate for $\vecv_{\mathrm{src}}(t)$. However, in view of the critical decay of the integrand, this requires favorable time oscillations. Observe that the above terms with time phases $e^{\pm is(\xi^2+1)}$, $e^{\pm is(\xi^2+3)}$ are non-stationary for any $s \in \R$ which implies that they have a better decay rate using integration by parts in the variable $s$. On the other hand, the terms with the phases  $e^{\pm is(\xi^2-1)}$ are stationary at the points $\xi = \pm 1$. Thus, it is important to know if the Fourier coefficients $\widehat{G_{3,1}}(\pm1)$ and $\widehat{G_{1,2}}(\pm 1)$ vanish. Indeed, this is true due to the following lemma.
\begin{lemma}\label{lemma: null structure}	It holds that
	\begin{equation}\label{eqn: non-resonance in time}
		\widehat{G_{3,1}}(\pm1) = \widehat{G_{1,2}}(\pm1)= 0.	
	\end{equation}
	\begin{proof}
First, to ease notation, we write 
\begin{equation}
	\widetilde{\calD} = \frac{i}{2} \begin{bmatrix}
		(-D_1  - D_2)  &  (D_1-D_2)   \\  (-D_1 + D_2) & (D_1 + D_2)
	\end{bmatrix},
\end{equation}
where
\begin{equation}
	\begin{split}
		&D_1 := (-\partial_x^2+1)S^2 = (-\partial_x^2+1)(\partial_x + \tanh(x))(\partial_x + \tanh(x)), \\
		&D_2 := S^2L_+ =  (\partial_x + \tanh(x))(\partial_x + \tanh(x))(-\partial_x^2 - 6\sech^2(x) + 1).
	\end{split}
\end{equation}
Since $\sigma_1 \widetilde{\calD} = - \widetilde{\calD}\sigma_1$ and $\calQ_3(\vec\Psi)= -\sigma_1 \calQ_1(\vec\Psi)$ (c.f. \eqref{calQ_3}), it follows that $G_{3,1} \equiv G_{1,2}$ as functions. Note that
\begin{equation}
	G_{3,1} = \frac{i}{2}\left(   D_1(Q\Psi_1^2) + D_1(Q\Psi_2^2) + 2D_1(2Q\Psi_1\Psi_2) + D_2(Q\Psi_1^2) - D_2(Q\Psi_2^2)\right),
\end{equation}
where 
\begin{equation*}
	\begin{split}
		(Q\Psi_1^2)(x) &= \sqrt{2} \sech(x)\tanh^4(x),\\
		(Q\Psi_1\Psi_2)(x) &= -\sqrt{2}\sech^3(x)\tanh^2(x),\\
		(Q\Psi_2^2)(x) &= \sqrt{2}\sech^5(x).
	\end{split}
\end{equation*}
By using the trigonometric identity $\sech^2(x) + \tanh^2(x) =1$, we may simplify the expression for $G_{3,1}$ into
\begin{equation*}
	G_{3,1}(x) =  \frac{i \sqrt{2}}{2}\Big(D_1\big(\sech(x)-6\sech^3(x)+6\sech^5(x)\big) + D_2\big(\sech(x) - 2\sech^3(x)\big) \Big).
\end{equation*}
By patient direct computation, we find
\begin{equation}\label{D1 term1}
\begin{split}
	F_1(x) &:= D_1\big(\sech(x)-6\sech^3(x)+6\sech^5(x)\big) 	 \\ &=192\sech^3(x) - 3456\sech^5(x) + 9720 \sech^7(x) - 6720\sech^9(x)
\end{split}
	\end{equation}
and
\begin{equation}\label{D2 term1}
	\begin{split}
		F_2(x) :=D_2\big(\sech(x)-2\sech^3(x)\big) = 48\sech^3(x)  -264\sech^5(x) + 240\sech^7(x).
	\end{split}
\end{equation}
Moreover, using  the identities
\begin{equation}
	\begin{split}
		(\partial_x^2\sech)(x) &= \sech(x) - 2\sech^3(x),\\
		(\partial_x^4\sech)(x) &= \sech(x) - 20\sech^3(x)+24\sech^5(x),\\
		(\partial_x^6\sech)(x) &= \sech(x) -182\sech^3(x)+840\sech^5(x)-720\sech^7(x),\\
		(\partial_x^8\sech)(x) &= \sech(x) - 1640\sech^3(x) +23184\sech^5(x)-60480\sech^7(x) + 40320\sech^9(x),\\
	\end{split}
\end{equation}
we obtain
\begin{equation}\label{D1 term2}
	 F_1(x) = - \frac{1}{6}\left(-\partial_x^2 + 3\partial_x^4 - 3\partial_x^6 + \partial_x^8\right)\sech(x)= -\frac{1}{6}(-\partial_x^2+1)^3(-\partial_x^2)\sech(x),
\end{equation}
and
\begin{equation}\label{D2 term2}
	F_2(x) = \frac{1}{3}\left(-\partial_x^2 + 2\partial_x^4 - \partial_x^6\right)\sech(x) = \frac{1}{3}(-\partial_x^2+1)^2(-\partial_x^2)\sech(x).
\end{equation}
Thus, using the property $\calF[-\partial_x^2 f] = \xi^2 \calF[f](\xi)$ and the fact that
\begin{equation*}
	\widehat{\sech}(\xi)= \sqrt{\frac{\pi}{2}} \sech\left(\frac{\pi \xi}{2}\right),
\end{equation*}
we compute that
\begin{equation}
	\widehat{G_{3,1}}(\xi) = \frac{i\sqrt{2}}{{2}}\left(\widehat{F_1}(\xi)+\widehat{F_2}(\xi)\right)= -\frac{i \sqrt{\pi} }{12 } (\xi^2-1)\xi^2(\xi^2+1)^2\sech\left(\frac{\pi\xi}{2}\right),
\end{equation}
which implies \eqref{eqn: non-resonance in time} as claimed.
\end{proof}
\end{lemma}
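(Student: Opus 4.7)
The plan is a direct calculation that exhibits $\widehat{G_{3,1}}(\xi)$ as an explicit polynomial times $\sech(\pi\xi/2)$, with a factor of $(\xi^2-1)$ that makes the vanishing at $\xi=\pm 1$ manifest. First I would note the block anticommutation $\sigma_1\widetilde{\calD}=-\widetilde{\calD}\sigma_1$, which is immediate from the displayed form of $\widetilde{\calD}$. Combined with $\calQ_3(\vec\Psi)=-\sigma_1\calQ_1(\vec\Psi)$ from \eqref{calQ_3}, this yields $\widetilde{\calD}\calQ_3=\sigma_1\widetilde{\calD}\calQ_1$, and hence $G_{3,1}\equiv G_{1,2}$ as functions. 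This halves the work: only $\widehat{G_{3,1}}(\pm 1)=0$ remains to be shown.

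Next, I would expand $G_{3,1}$ by unpacking the block structure of $\widetilde{\calD}$ and inserting the explicit products $Q\Psi_1^2=\sqrt{2}\sech\tanh^4$, $Q\Psi_1\Psi_2=-\sqrt{2}\sech^3\tanh^2$, $Q\Psi_2^2=\sqrt{2}\sech^5$, and then converting every occurrence of $\tanh^2$ to $1-\sech^2$. After collecting like terms, the identity to prove collapses to
\[
G_{3,1}=\tfrac{i\sqrt{2}}{2}(F_1+F_2),\quad F_1:=D_1\bigl(\sech-6\sech^3+6\sech^5\bigr),\quad F_2:=D_2\bigl(\sech-2\sech^3\bigr),
\]
with the scalar operators $D_1=(-\partial_x^2+1)S^2$ and $D_2=S^2L_+$.

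The crucial observation is that the range of each of $D_1,D_2$ on these polynomial inputs lies in the finite-dimensional subspace spanned by $\{\partial_x^{2k}\sech\}_{k\ge 0}$. Using the closed-form identities for $\partial_x^{2j}\sech$ listed in the statement, a patient calculation yields
\[
F_1=-\tfrac{1}{6}(-\partial_x^2+1)^3(-\partial_x^2)\sech,\qquad F_2=\tfrac{1}{3}(-\partial_x^2+1)^2(-\partial_x^2)\sech,
\]
which combine algebraically into $F_1+F_2=\tfrac{1}{6}(-\partial_x^2+1)^2(-\partial_x^2)(1+\partial_x^2)\sech$. Passing to the Fourier side via $\widehat{\sech}(\xi)=\sqrt{\pi/2}\sech(\pi\xi/2)$, the operator $1+\partial_x^2$ contributes the symbol $1-\xi^2=-(\xi^2-1)$, producing
\[
\widehat{G_{3,1}}(\xi)=-\tfrac{i\sqrt{\pi}}{12}(\xi^2-1)\,\xi^2\,(\xi^2+1)^2\,\sech\bigl(\tfrac{\pi\xi}{2}\bigr),
\]
which visibly vanishes at $\xi=\pm 1$ as desired.

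The main obstacle is the algebraic bookkeeping in computing $F_1$ and $F_2$, since $D_2=S^2L_+$ combines five derivatives with the variable-coefficient potential $-6\sech^2$, producing many intermediate $\sech^k$ terms. I would control this by exploiting $S=Q\partial_x Q^{-1}$, hence $S^2=Q\partial_x^2 Q^{-1}$, which reduces applying $S^2$ to any polynomial in $\sech$ to dividing by $Q$, differentiating twice, and multiplying back by $Q$. Each intermediate expression stays inside $\mathrm{span}\{\sech^k:1\le k\le 9\}$, and since $\{\sech^{2k+1}\}_{k\ge 0}$ is in bijective correspondence with $\{(-\partial_x^2)^k\sech\}_{k\ge 0}$ via the higher-derivative identities, the rewriting of $F_1+F_2$ as an operator polynomial in $-\partial_x^2$ applied to $\sech$ can be verified by matching coefficients at each power of $\sech$.
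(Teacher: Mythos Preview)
Your proposal is correct and follows essentially the same route as the paper's proof: the reduction $G_{3,1}\equiv G_{1,2}$ via $\sigma_1\widetilde{\calD}=-\widetilde{\calD}\sigma_1$, the simplification to $F_1,F_2$ using $\tanh^2=1-\sech^2$, the rewriting of $F_1,F_2$ as operator polynomials in $-\partial_x^2$ acting on $\sech$, and the Fourier transform to reveal the $(\xi^2-1)$ factor. Your explicit combination $F_1+F_2=\tfrac{1}{6}(-\partial_x^2+1)^2(-\partial_x^2)(1+\partial_x^2)\sech$ and the shortcut $S^2=Q\partial_x^2 Q^{-1}$ are nice touches that streamline the bookkeeping but do not change the underlying argument.
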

\begin{remark}
	We determined the identities \eqref{D1 term1} -- \eqref{D2 term2} with the aid of the Wolfram Mathematica software.
\end{remark} 
The above lemma shows that the localized quadratic resonant terms are well-behaved for the nonlinear perturbation equation \eqref{eqn: linearized cubic NLS in complex-conjugate coordinates}. The presence of this null structure is potentially a key ingredient for a perturbative proof of the asymptotic stability of the solitary wave solutions to the 1D focusing cubic NLS. We end this subsection with the following closing remark.
\begin{remark}
The motivation for analyzing the quadratic nonlinearity in the perturbation equation \eqref{eqn: linearized cubic NLS in complex-conjugate coordinates} and for uncovering the null structure for the localized quadratic resonant terms in Lemma~\ref{lemma: null structure} is due to the recent work by L\"uhrmann-Schlag \cite{21LS}, where the authors investigate the asymptotic stability of kink solutions to the 1D sine-Gordon equation under odd perturbations. In \cite{21LS}, the authors employ a similar conjugation identity like the one we used in \eqref{eqn: mirror conjugate identity} to transform the scalar Schr\"odinger operator $H_1 := -\partial_x^2 -2\sech^2(x)$ to the flat operator $H_0 := -\partial_x^2$ for the perturbation equation. In fact, it is easy to check that one has the conjugation identity $SH_1 = H_0 S$, where $S = \partial_x + \tanh(x)$. Moreover, an analogue of Lemma~\ref{lemma: null structure} on the non-resonant property for the localized quadratic resonant terms in the perturbation equation for the sine-Gordon kink was first obtained in \cite[Remark 1.2]{23LLSS}. This remarkable null structure for the sine-Gordon model played a key role in the asymptotic stability proof in \cite{21LS}. In \cite{23LS}, the same authors obtained long-time decay estimates for even perturbation of  the soliton of the  1D focusing  cubic Klein-Gordon equation. The absence of the null structure in the nonlinearity of the perturbation equation in the focusing cubic Klein-Gordon model is a major  obstruction to full co-dimension one asymptotic stability result under even perturbations.
 
Our short discussion on the effects of the threshold resonance on the quadratic term for \eqref{eqn: linearized cubic NLS in complex-conjugate coordinates} suggests that the localized quadratic resonant terms are well-behaved for the perturbation equation in the 1D cubic NLS model. However, note that a full perturbative proof of the asymptotic stability problem for this model has to encompass the modulation theory associated to the moving solitary wave, and take into account the long-range (modified) scattering effects due to the non-localized cubic nonlinearities in the perturbation equation. We point out that Collot-Germain \cite{23CollotGermain} recently obtained general such asymptotic stability results for solitary waves for 1D nonlinear Schr\"odinger equations under the assumption that the linearized matrix Schr\"odinger operator does not exhibit threshold resonances. 
\end{remark}

\subsection{Organization of the article}
The remaining sections of this paper are devoted to the proof of Theorem~\ref{theorem: local decay estimate}. In Section~2, we state a few stationary phase lemmas, which will be heavily utilized in Sections~5 and 6, and we will also provide an analogue of Theorem~\ref{theorem: local decay estimate} for the free matrix operator $\calH_0$. In Section~3, we employ the symmetric  resolvent expansion following the framework in  \cite{06ErdoganSchlag}, and in Section~4, we carefully extract the leading operators for these resolvent expansions. A characterization of the threshold resonance is stated in Lemma~\ref{lemma: threshold resonance characterization} under the spectral assumptions (A1)--(A6). Then, in Section~5, we prove dispersive estimates for the evolution operator $e^{it\calH}$ in the low energy regime. The approach taken in Section~5 largely follows the techniques employed in \cite{22ErdoganGreen} for one-dimensional Dirac operators. In Section 6, we prove dispersive estimates for the remaining energy regimes and finish the proof of Theorem~\ref{theorem: local decay estimate}.

\subsection{Notation}
For any $\vecf = (f_1,f_2)^\top,\ \vecg = (g_1,g_2)^\top \in L^2(\R) \times L^2(\R)$, we use the inner product
\begin{equation}
	\langle \vecf,\vecg \rangle := \int_{\R}\vec{f}^* \vec{g}\ \ud x =   \int_{\R} \left(\bar{f}_1g_1 + \bar{f}_2 g_2\right) \ud x,\mwhere \vec{f}^* := (\bar{f}_1,\bar{f}_2).
\end{equation}
The Schwartz space is denoted by $\calS(\R)$ and we use the weighted $L^2$-spaces
\begin{equation}
	X_\sigma := \langle x \rangle^{-\sigma}L^2(\R) \times \langle x \rangle^{-\sigma}L^2(\R), \quad \Vert \vecf \Vert_{X_\sigma} := \Vert \jap{x}^\sigma \vecf \Vert_{L^2(\R)\times L^2(\R)}, \mwhere \sigma \in \R.
\end{equation}
Note that for any $\alpha > \beta > 0$, one has the continuous inclusions
\begin{equation}
	X_\alpha \subset X_\beta \subset X_0 = L^2(\R)\times L^2(\R) \subset X_{-\beta} \subset X_{-\alpha},
\end{equation}
and the duality $X_\alpha^* = X_{-\alpha}$. 
Our convention for the Fourier transform is
\begin{equation*}
	\calF[f](\xi) = \hatf(\xi) =  \frac{1}{\sqrt{2\pi}} \int_\R e^{-ix\xi}f(x)\ud x, \quad 	\calF^{-1}[f](x) = \checkf(x) = \frac{1}{\sqrt{2\pi}} \int_\R e^{ix\xi}f(\xi) \ud \xi.
\end{equation*}
We denote by $C>0$ an absolute constant whose value is allowed to change from line to line. In order to indicate that the constant depends on a parameter, say $\theta$, we will use the notation $C_\theta$ or $C(\theta)$. For non-negative $X$, $Y$ we write $X \lesssim Y$ if $X \leq CY$. We use the Japanese bracket notation $\langle x \rangle = (1+x^2)^{\frac{1}{2}}$ for $x \in \R$. The standard tensors on $\R^2$ are denoted by
\begin{equation}\label{eqn: def e1e2}
	\begin{split}
		\underline{e}_{1} = \begin{bmatrix}1 \\ 0\end{bmatrix}, \quad \underline{e}_2 = \begin{bmatrix}0 \\ 1 \end{bmatrix},\quad  \underline{e}_{11} = \underline{e}_1\underline{e}_1^\top =\begin{bmatrix}
			1 & 0 \\ 0 & 0
		\end{bmatrix},\quad \underline{e}_{22} = \underline{e}_2\underline{e}_2^\top= \begin{bmatrix}
			0 & 0 \\ 0 & 1
		\end{bmatrix}.		
	\end{split}
\end{equation}
\textit{Acknowledgments.} The author would like to thank his Ph.D. advisor Jonas L\"uhrmann for suggesting the problem and patiently checking the manuscript. The author is grateful to Andrew Comech, Wilhelm Schlag, Gigliola Staffilani, and Ebru Toprak for helpful discussions.

\section{Free matrix Schr\"odinger estimates}
In this section, we derive dispersive estimates for the free evolution semigroup $e^{it\calH_0}$. We recall that the free matrix Sch\"odinger operator
\begin{equation*}
	\calH_0 = \begin{bmatrix}
		-\partial_x^2 + \mu & 0 \\ 0 & \partial_x^2 - \mu
	\end{bmatrix},
\end{equation*}
has a purely continuous spectrum
$$\spec(\calH_0) = \sigma_{\mathrm{ac}}(\calH_0) = (-\infty,-\mu] \cup [\mu,\infty),$$
and the resolvent operator of $\calH_0$ is given by 
\begin{equation}\label{eqn: R_0 1}
	(\calH_0 - \lambda)^{-1} = \begin{bmatrix}
		R_0(\lambda - \mu) & 0 \\ 0 & -R_0(-\lambda-\mu)
	\end{bmatrix}, \quad \lambda \in \C \setminus (-\infty,-\mu] \cup [\mu,\infty),
\end{equation}
where $R_0$ is the resolvent operator for the one-dimensional Laplacian, with an integral kernel given by
\begin{equation}\label{eqn: R_0 2}
	R_0(\zeta^2)(x,y) :=  (-\partial^2 - \zeta^2)^{-1}(x,y) = \frac{-e^{i \zeta \vert x - y\vert }}{2i \zeta}, \quad \zeta \in \C_+,
\end{equation}
where $\C_+$ is the upper half-plane. We obtain from the scalar resolvent theory due to Agmon \cite{agmon1975spectral} that the limiting resolvent operators
\begin{equation*}
	\big(\calH_0 - (\lambda \pm i0) \big)^{-1} = \lim_{\eps \downarrow  0}\, \big(\calH_0 - (\lambda \pm i\eps)\big)^{-1}, \quad \lambda \in (-\infty,-\mu) \cup (\mu,\infty),
\end{equation*}
are well defined as operators from $X_\sigma  \to X_{-\sigma} $ for any $\sigma > \frac{1}{2}$. Here, the matrix operator $\calH_0$ is self-adjoint and Stone's formula applies: 
\begin{equation}\label{eqn: free evolution1}
	e^{it\calH_0} = \frac{1}{2 \pi i}\int_{\vert \lambda \vert \geq \mu} e^{it\lambda}\left[\big(\calH_0 - (\lambda + i0)\big)^{-1} - \big(\calH_0 - (\lambda - i0)\big)^{-1}\right]\,\ud \lambda.
\end{equation}
Let us focus on the spectrum on the positive semi-axis $[\mu,\infty)$, as the negative part can be treated using the symmetric properties of $\calH$ (c.f. Remark~\ref{remark: Ps^-}). By invoking the change of variables $\lambda \mapsto \lambda = \mu + z^2$ with $0< z <\infty$, the kernel of $e^{it\calH_0}P_{\mathrm{s}}^+$ is then given by
\begin{equation*}
	e^{it\calH_0}P_{\mathrm{s}}^+(x,y) = \frac{e^{it\mu}}{\pi i} \int_0^\infty e^{itz^2}z \left[\big(\calH_0 - (\mu+z^2+ i0)\big)^{-1} - \big(\calH_0 - (\mu+z^2- i0)\big)^{-1} \right](x,y)\,\ud z.
\end{equation*}
Here, the notation $P_\mathrm{s}^+$ means that we restrict the free evolution $e^{it\calH_0}$ to  the positive semi-axis in the integral representation \eqref{eqn: free evolution1}. By \eqref{eqn: R_0 1} and \eqref{eqn: R_0 2}, we have
\begin{equation}\label{eqn: def resolvent H_0}
	\big(\calH_0 - (\mu+z^2\pm i0)\big)^{-1}(x,y) = \begin{bmatrix}
		\frac{\pm ie^{\pm i z \vert x - y \vert}}{2 z} & 0 \\ 0 & -\frac{e^{-\sqrt{z^2+2\mu}\vert x-y\vert}}{2\sqrt{z^2+2\mu}}
	\end{bmatrix}, \quad 0 < z < \infty,
\end{equation}
and thus,
\begin{equation}\label{eqn: exp itH_0 1}
	e^{it \calH_0}P_{\mathrm{s}}^+(x,y) = \frac{e^{it \mu}}{2\pi}\int_\R e^{it z^2} e^{i z \vert x - y \vert}\underline{e}_{11}\, \ud z.
\end{equation}
Note that the above integral is to be understood in the principal value sense, due to the pole in \eqref{eqn: def resolvent H_0}.  To this end, we recall the following standard stationary phase results. The first lemma is a direct consequence of the classic van der Corput lemma.
\begin{lemma}\label{lemma: van der corput estimate}
Let $r \in \R$, and let $\psi(z)$ be a compactly supported smooth function. Then for any $\vert t\vert > 0$,
	\begin{equation}\label{eqn: VDC1}
		\left \vert \int_{\R} e^{itz^2 + i z r}\psi (z)\, \ud z \right \vert \leq C    \vert t \vert^{-\frac{1}{2}}\Vert \partial_{z} \psi\Vert_{L_z^1(\R)}.
	\end{equation}
	Moreover, if $\psi(z)$ is supported away from zero, then for all $\vert t \vert > 0 $, 
	\begin{equation}\label{eqn: VDC2}
		\left \vert \int_{\R} e^{itz^2 + i z r}\psi (z)\  \ud z \right \vert \leq C    \vert t  \vert^{-\frac{3}{2}} \left\Vert [\partial_z^2 + i r \partial_{z}]\big(\tfrac{\psi }{z}\big) \right\Vert_{L_z^1(\R)}.
	\end{equation}
	\begin{proof}
		The bound \eqref{eqn: VDC1} follows from the van der Corput lemma (see e.g. \cite[VIII Proposition~2]{stein1993harmonic}) by observing that the phase $\phi(z) = z^2 + \frac{zr}{t}$ satisfies $\vert \partial_z^2 \phi(z)\vert = 2>0$.  The last bound follows by first integrating by parts 
		\begin{equation*}
			\int_{\R} e^{itz^2}e^{i z r}\psi (z) \,\ud z  = -\frac{1}{2 i t}\int_\R e^{itz^2}\partial_{z}\left [e^{iz r}\frac{\psi(z)}{z}\right] \ud z  =-\frac{1}{2 i t}\int_\R e^{itz^2 + iz r}[ir + \partial_{z}]\left[\frac{\psi (z)}{z}\right] \ud z, 
		\end{equation*}
		and then invoking the van der Corput lemma.
	\end{proof}
\end{lemma}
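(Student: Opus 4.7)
The plan is to deduce both bounds from the classical second-derivative van der Corput lemma applied to the quadratic phase $\phi(z) = tz^2 + zr$, which satisfies $|\phi''(z)| = 2|t|$ uniformly in $z$. The first bound is then a one-step integration by parts, while the stronger second bound is obtained by first performing an additional integration by parts that trades one power of $z^{-1}$ for a factor of $|t|^{-1}$ (legal because $\psi$ is supported away from the origin), and then applying the first bound.

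For \eqref{eqn: VDC1}, I would set $F(z) := \int_{-\infty}^z e^{i\phi(s)}\,\ud s$. The second-derivative van der Corput estimate (together with monotonicity of $\phi'$) yields the uniform bound $\|F\|_{L^\infty(\R)} \le C|t|^{-1/2}$. Since $\psi$ is compactly supported, integration by parts gives
\begin{equation*}
\int_\R e^{i\phi(z)}\psi(z)\,\ud z = -\int_\R F(z)\,\psi'(z)\,\ud z,
\end{equation*}
with no boundary contribution, and taking absolute values produces the asserted $C|t|^{-1/2}\|\psi'\|_{L^1}$ bound.

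For \eqref{eqn: VDC2}, I would exploit the identity $e^{itz^2} = (2itz)^{-1}\partial_z(e^{itz^2})$, valid for $z \neq 0$. Integration by parts in $z$, which is legitimate because $\psi/z$ is smooth and compactly supported by hypothesis, gives
\begin{equation*}
\int_\R e^{itz^2+izr}\psi(z)\,\ud z = -\frac{1}{2it}\int_\R e^{itz^2+izr}\,[\partial_z + ir]\!\left(\frac{\psi(z)}{z}\right)\ud z.
\end{equation*}
The remaining integral has exactly the form treated in \eqref{eqn: VDC1}, with new amplitude $[\partial_z + ir](\psi/z)$, itself smooth and compactly supported away from zero. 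Applying \eqref{eqn: VDC1} contributes a factor $C|t|^{-1/2}\|[\partial_z^2 + ir\partial_z](\psi/z)\|_{L^1}$, and combining with the $(2t)^{-1}$ prefactor yields the claimed $|t|^{-3/2}$ decay.

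There is no genuine obstacle: the argument is a textbook stationary-phase computation. The only point worth flagging is that the preliminary integration by parts in the second bound must not generate boundary contributions or divide by zero, and both dangers are precisely excluded by the hypothesis that $\supp \psi$ be compact and disjoint from the origin.
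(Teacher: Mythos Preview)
Your proof is correct and follows essentially the same route as the paper: both invoke the second-derivative van der Corput lemma for \eqref{eqn: VDC1}, and both obtain \eqref{eqn: VDC2} by first integrating by parts via $e^{itz^2} = (2itz)^{-1}\partial_z(e^{itz^2})$ and then applying \eqref{eqn: VDC1}. Your treatment of \eqref{eqn: VDC1} spells out the primitive-plus-integration-by-parts step a bit more explicitly than the paper, but this is exactly the content of the standard van der Corput corollary the paper cites.
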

We will also need the following sharper stationary phase lemma, which may be found in many text on oscillatory integrals with a Fresnel phase.
\begin{lemma}\label{lemma: stationary phase fresnel} 
	Let $\chi(z)$ be a smooth, non-negative, even cut-off function such that $\chi(z) = 1$ for $z \in [-1,1]$ and $\chi(z) = 0$ for $\vert z \vert \geq 2$. For $r, t \in \R $, define
	\begin{equation}
		G_t(r) := 	\int_\R e^{itz^2+izr}\chi(z^2)\ \ud z. 
	\end{equation}
	Then there exists $C = C\big(\Vert \chi(z^2)\Vert_{W^{4,1}(\R)}\big) >0$ such that for any $r \in \R$ and for any $ \vert t \vert > 0 $,
	\begin{equation}\label{eqn: G_t(r) error1}
		\left \vert G_t(r) - \frac{\sqrt{\pi}}{{\sqrt{-it}}}  e^{-i\frac{r^2}{4t}} \right \vert \leq C \vert t \vert^{-\frac{3}{2}}\langle r \rangle.
	\end{equation}
	Moreover, if $r_1, r_2 \geq 0$, then
	\begin{equation}\label{eqn: G_t(r) error2}
		\left \vert G_t(r_1+r_2) - \frac{\sqrt{\pi}}{{\sqrt{-it}}} e^{-i\frac{r_1^2}{4t}}e^{-i\frac{r_2^2}{4t}} \right \vert \leq C \vert t \vert^{-\frac{3}{2}}\langle r_1 \rangle \langle r_2 \rangle.
	\end{equation}
\end{lemma}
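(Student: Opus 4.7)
The plan is to compare $G_t(r)$ directly to the exact Fresnel integral by Fourier-expanding the cut-off $\chi(z^2)$, thereby reducing the error to elementary pointwise estimates on $|e^{iu}-1|$. First, I would record the basic identity $\int_\R e^{itz^2+izs}\,\ud z = \sqrt{\pi/(-it)}\,e^{-is^2/(4t)}$ for $s\in\R$ and $t\neq 0$, obtained by completing the square. Setting $\eta := \calF[\chi(\cdot^2)]$, which is a Schwartz function of the dual variable, I write $\chi(z^2) = (2\pi)^{-1/2}\int_\R \eta(\xi) e^{iz\xi}\,\ud\xi$, and Fubini together with the Fresnel identity yields
\[
G_t(r) = \frac{1}{\sqrt{-2it}}\int_\R \eta(\xi)\, e^{-i(r+\xi)^2/(4t)}\,\ud\xi.
\]

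Next, using $\int_\R \eta(\xi)\,\ud\xi = \sqrt{2\pi}\,\chi(0) = \sqrt{2\pi}$ (so that $\sqrt{\pi}/\sqrt{-it} = (\sqrt{-2it})^{-1}\int_\R \eta(\xi)\,\ud\xi$) and expanding the square $(r+\xi)^2 = r^2 + 2r\xi + \xi^2$, I isolate the main term:
\[
G_t(r) - \frac{\sqrt{\pi}}{\sqrt{-it}}\,e^{-ir^2/(4t)} \;=\; \frac{e^{-ir^2/(4t)}}{\sqrt{-2it}}\int_\R \eta(\xi)\Bigl[e^{-ir\xi/(2t) - i\xi^2/(4t)} - 1\Bigr]\,\ud\xi.
\]
The factorization $e^{A+B}-1 = (e^A-1)e^B + (e^B-1)$ combined with $|e^{iu}-1|\leq |u|$ for $u\in\R$ gives the pointwise bound
\[
\Bigl|e^{-ir\xi/(2t) - i\xi^2/(4t)} - 1\Bigr| \;\leq\; \frac{|r||\xi|}{2|t|} + \frac{\xi^2}{4|t|}.
\]
The hypothesis $\chi(\cdot^2)\in W^{4,1}(\R)$ yields the pointwise decay $|\eta(\xi)| \leq C\|\chi(\cdot^2)\|_{W^{4,1}}\langle\xi\rangle^{-4}$, so that $\int_\R (|\xi|+\xi^2)|\eta(\xi)|\,\ud\xi \leq C(\|\chi(\cdot^2)\|_{W^{4,1}})$. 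Multiplying through and integrating thus estimates the inner integral above by $\calO(\langle r\rangle/|t|)$, which combined with the prefactor $(2|t|)^{-1/2}$ proves \eqref{eqn: G_t(r) error1}.

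Finally, \eqref{eqn: G_t(r) error2} follows by applying \eqref{eqn: G_t(r) error1} at $r = r_1 + r_2$ and invoking the triangle inequality, together with the elementary identity
\[
\Bigl|e^{-i(r_1+r_2)^2/(4t)} - e^{-ir_1^2/(4t)}e^{-ir_2^2/(4t)}\Bigr| \;=\; \bigl|e^{-ir_1 r_2/(2t)} - 1\bigr| \;\leq\; \frac{r_1 r_2}{2|t|},
\]
and the inequalities $\langle r_1+r_2\rangle \leq 2\langle r_1\rangle\langle r_2\rangle$ and $r_1 r_2 \leq \langle r_1\rangle\langle r_2\rangle$ valid for $r_1,r_2\geq 0$. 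I expect no serious obstacle: the only mild bookkeeping is verifying the weighted $L^1$ bounds on $\eta$, which is precisely where the $W^{4,1}$ norm of $\chi(\cdot^2)$ enters the final constant. Because the Fourier representation effectively separates the cut-off from the oscillation, no genuine stationary-phase analysis (van der Corput, multiple integrations by parts near the critical point $z = -r/(2t)$) is required.
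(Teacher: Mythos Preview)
Your proof is correct and follows essentially the same approach as the paper: both arguments pass to the Fourier side via the known transform of the Fresnel phase and then bound the error using $|e^{iu}-1|\le|u|$ together with the $W^{4,1}$-controlled decay of $\eta=\calF[\chi(\cdot^2)]$. The only organizational difference is that the paper first shifts to the critical point $z_*=-r/(2t)$ and then applies Plancherel, which splits the error into two pieces (one from $e^{-i\xi^2/(4t)}-1$ and a separate one from $1-\chi(z_*^2)$), whereas your Fourier expansion of the cutoff bypasses the shift and captures both contributions in the single remainder $e^{-ir\xi/(2t)-i\xi^2/(4t)}-1$; the deduction of \eqref{eqn: G_t(r) error2} from \eqref{eqn: G_t(r) error1} is identical in both.
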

\begin{proof}
	First, the phase $\phi(z) := z^2 + \frac{zr}{t}$ has a critical point at $z_* = -\frac{r}{2t} \in \R$ with $\phi''(z) = 2 > 0$. We use Taylor expansion of $\phi(z)$ and shift the integral by the change of variables $z \mapsto z + z^*$ to obtain
	\begin{equation}\label{eqnproof: G_t(r) 1}
		G_t(r) = \int_{\R} e^{it\phi(z)}\chi(z^2)\,\ud z = \int_R e^{it\phi(z^*)+ \phi''(z_*)(z-z_*)^2}\chi(z^2) \,\ud z = e^{-i\frac{r^2}{4t}}\int_\R e^{itz^2}\chi\big((z+z_*)^2\big) \,\ud z.
	\end{equation}
	Using the Fourier transform of the free Schr\"odinger group and the Plancherel's identity, we have
	\begin{equation*}
		\begin{split}
		\int_\R e^{itz^2}\chi\big((z+z_*)^2\big) \,\ud z 	&= \frac{1}{\sqrt{-2 i t}}\int_\R  e^{-i\frac{\xi^2}{4t}} \calF_{z \to \xi}\left[\chi\big((z+z_*)^2\big)\right](\xi) \,\ud \xi \\
		&=	\frac{1}{\sqrt{-2 i t}}\int_\R \calF_{z \to \xi}\left[\chi\big((z+z_*)^2\big)\right](\xi) \,\ud\xi \\
		&\qquad + \frac{1}{\sqrt{-2 i t}}\int_\R  \Big(e^{-i\frac{\xi^2}{4t}}-1\Big) \calF_{z \to \xi}\left[\chi\big((z+z_*)^2\big)\right](\xi) \,\ud\xi \\
		&= \frac{\sqrt{2\pi}}{\sqrt{-2it}}\chi(z_*^2) + 		\frac{1}{\sqrt{-2 i t}}\int_\R  \Big(e^{-i\frac{\xi^2}{4t}}-1\Big) e^{iz_*\xi} \calF \left[\chi\big((z+z_*)^2\big)\right](\xi) \,\ud\xi.
		\end{split}
	\end{equation*}
	Using the bound $\vert e^{i\frac{\xi^2}{4t}}-1\vert \leq C\vert t \vert^{-1}\xi^2 $ and the H\"older's inequality, we bound the remainder term by
	\begin{equation*}
		\begin{split}
		\left \vert \frac{1}{\sqrt{-2 i t}}\int_\R  \Big(e^{-i\frac{\xi^2}{4t}}-1\Big) e^{iz_*\xi} \calF  \left[\chi\big((z+z_*)^2\big)\right](\xi) \,\ud\xi \right \vert   &\leq C \vert t\vert^{-\frac{3}{2}} \int_\R \vert \xi^2 \calF[\chi(z^2)](\xi) \vert \,\ud\xi \\
		& \leq C \vert t \vert^{-\frac{3}{2}}\Vert \chi(z^2)\Vert_{W^{4,1}(\R)} \leq C \vert t \vert^{-\frac{3}{2}}.
		\end{split}
	\end{equation*}
	Next, we use the fact that $\vert 1 - \chi(z^2) \vert \leq C \vert z \vert$ for all $z \in \R$ and  for some $C>0$ large enough so that 
	\begin{equation}\label{eqnproof: G_t(r) 4}
		\vert 1-\chi(z_*^2)\vert \leq C \vert z_* \vert \leq C \vert t \vert^{-1}\langle r \rangle.
	\end{equation}
	Then \eqref{eqn: G_t(r) error1} follows \eqref{eqnproof: G_t(r) 1}--\eqref{eqnproof: G_t(r) 4}. Finally, we use the estimate \eqref{eqn: G_t(r) error1} to obtain
	\begin{equation*}
		\left \vert G_t(r_1+r_2)  - \frac{\sqrt{2\pi}}{{\sqrt{-2it}}} e^{-i\frac{(r_1-r_2)^2}{4t}} \right \vert \leq C \vert t \vert^{-\frac{3}{2}}\langle r_1 - r_2 \rangle \leq C \vert t \vert^{-1} \langle r_1 \rangle \langle r_2 \rangle.
	\end{equation*}
	Thus, by the triangle inequality and the bound
	\begin{equation*}
		\left \vert e^{-i\frac{(r_1-r_2)^2}{4t}} - e^{-i\frac{r_1^2}{4t}}e^{-i\frac{r_2^2}{4t}} \right \vert = \left\vert e^{-i\frac{r_1^2}{4t}}e^{-i\frac{r_2^2}{4t}} \right\vert \left\vert e^{i\frac{r_1 r_2}{2t}} - 1 \right\vert \leq C \vert t \vert^{-1} \langle r_1 \rangle \langle r_2 \rangle,
	\end{equation*}
	we conclude \eqref{eqn: G_t(r) error2}.
\end{proof}
Next, we prove the analogue of Theorem~\ref{theorem: local decay estimate} for the free evolution. We emphasize that the free matrix Schr\"odinger operator $\calH_0$ has threshold resonances $\calH_0 \underline{e}_{1} = \mu \underline{e}_1$ and $\calH_0 \underline{e}_2 = -\mu \underline{e}_2$.
\begin{proposition}\label{prop: free estimate}
	For any $\vec{u} = (u_1,u_2) \in \calS(\R) \times \calS(\R)$ and for any $\vert t\vert  \geq 1$, we have
	\begin{equation}\label{eqn: free unweighted estimate}
		\left\Vert e^{it\calH_0} P_{\mathrm{s}}^+ \vec{u}\, \right\Vert_{L_x^\infty \times L_x^\infty} \lesssim \vert t \vert^{-\frac{1}{2}}\Vert \vec{u}\, \Vert_{L_x^1 \times L_x^1},
	\end{equation}	
	and
	\begin{equation}\label{eqn: free weighted estimate}
		\left\Vert \langle x \rangle^{-1} \left( e^{it\calH_0}P_{\mathrm{s}}^+   - F_t^0\right)\vec{u}\,  \right\Vert_{L_x^\infty \times L_x^\infty`} \lesssim \vert t \vert^{-\frac{3}{2}}\Vert \langle x \rangle \vec{u} \, \Vert_{L_x^1 \times L_x^1},
	\end{equation}
	where
	\begin{equation}\label{eqn: formula F_t0}
		F_t^0(x,y) := \frac{e^{it \mu} }{\sqrt{-4\pi i t}}e^{-i\frac{x^2}{4t}}\underline{e}_1e^{-i\frac{y^2}{4t}}\underline{e}_1^\top.
	\end{equation}
\end{proposition}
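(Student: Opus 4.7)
The strategy is to evaluate the oscillatory integral in \eqref{eqn: exp itH_0 1} in closed form via the standard Fresnel identity, and then compare the resulting kernel directly with $F_t^0$. Completing the square in the phase $tz^2 + zr$ yields
\begin{equation*}
\frac{1}{2\pi}\int_\R e^{itz^2 + izr}\,\ud z = \frac{1}{\sqrt{-4\pi it}}\,e^{-ir^2/(4t)}, \qquad r \in \R, \ t \neq 0,
\end{equation*}
so substituting $r = |x-y|$ in \eqref{eqn: exp itH_0 1} produces the explicit representation
\begin{equation*}
e^{it\calH_0}P_{\mathrm{s}}^+(x,y) = \frac{e^{it\mu}}{\sqrt{-4\pi it}}\,e^{-i(x-y)^2/(4t)}\,\underline{e}_{11}.
\end{equation*}

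The unweighted bound \eqref{eqn: free unweighted estimate} is then immediate: the kernel has modulus at most $(4\pi|t|)^{-1/2}$, and since $\underline{e}_{11}$ only picks out the first component of $\vec u$, a trivial $L^1$--$L^\infty$ kernel bound gives $\|e^{it\calH_0}P_{\mathrm s}^+\vec u\|_{L^\infty \times L^\infty} \lesssim |t|^{-1/2}\|u_1\|_{L^1} \lesssim |t|^{-1/2}\|\vec u\|_{L^1 \times L^1}$.

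For the weighted bound \eqref{eqn: free weighted estimate}, I would expand $(x-y)^2 = x^2 - 2xy + y^2$ to factor the kernel as
\begin{equation*}
e^{it\calH_0}P_{\mathrm{s}}^+(x,y) = \frac{e^{it\mu}}{\sqrt{-4\pi it}}\,e^{-ix^2/(4t)}\,e^{-iy^2/(4t)}\,e^{ixy/(2t)}\,\underline{e}_{11},
\end{equation*}
whose first three exponential factors reproduce exactly the kernel of $F_t^0$ in \eqref{eqn: formula F_t0}. Hence
\begin{equation*}
\big(e^{it\calH_0}P_{\mathrm s}^+ - F_t^0\big)(x,y) = \frac{e^{it\mu}}{\sqrt{-4\pi it}}\,e^{-ix^2/(4t)}\,e^{-iy^2/(4t)}\,\big(e^{ixy/(2t)} - 1\big)\,\underline{e}_{11}.
\end{equation*}
The elementary bound $|e^{i\theta}-1| \leq |\theta|$ applied with $\theta = xy/(2t)$ gives $|e^{ixy/(2t)}-1| \leq |xy|/(2|t|)$, so the difference kernel is pointwise bounded by $C|t|^{-3/2}|x||y|$. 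Dividing by $\langle x \rangle$ and using $|x|/\langle x \rangle \leq 1$ together with $|y| \leq \langle y \rangle$, we conclude
\begin{equation*}
\big\|\langle x \rangle^{-1}\big(e^{it\calH_0}P_{\mathrm s}^+ - F_t^0\big)\vec u\big\|_{L^\infty \times L^\infty} \lesssim |t|^{-3/2}\|\langle y \rangle u_1\|_{L^1} \lesssim |t|^{-3/2}\|\langle x \rangle \vec u\|_{L^1 \times L^1}.
\end{equation*}

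In this free setting the argument is essentially algebraic and I do not anticipate any serious obstacle. The stationary phase lemmas \lem{lemma: van der corput estimate} and \lem{lemma: stationary phase fresnel} are not needed here, but they are tailored for the perturbed analysis, where the integrand will carry additional slowly varying factors from the resolvent expansion. The factorization $(x-y)^2 = x^2 - 2xy + y^2$ used above also motivates the form of \eqref{eqn: G_t(r) error2}, which will be the mechanism for isolating the analogous rank-one leading term $F_t$ in the full proof of \thm{theorem: local decay estimate}.
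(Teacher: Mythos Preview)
Your proof is correct and takes a genuinely more elementary route than the paper. You evaluate the oscillatory integral \eqref{eqn: exp itH_0 1} in closed form via the Fresnel identity, obtaining the exact Mehler-type kernel $\frac{e^{it\mu}}{\sqrt{-4\pi it}}e^{-i(x-y)^2/(4t)}\underline{e}_{11}$; the two estimates then drop out from the algebraic factorization $(x-y)^2 = x^2 + y^2 - 2xy$ and the bound $|e^{i\theta}-1|\le|\theta|$. The paper instead splits the integral into low and high energy pieces via a cutoff $\chi(z^2)$, handles the high-energy part with a Plancherel/truncation argument, and treats the low-energy part with the van der Corput bound \eqref{eqn: VDC1} for the unweighted estimate and the stationary phase expansion \eqref{eqn: G_t(r) error2} for the weighted one. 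Your approach is cleaner and essentially optimal for the free operator, where the integrand has no extra $z$-dependence beyond the phase. The paper's route is deliberately over-engineered here: it rehearses exactly the decomposition and lemmas that become indispensable once the perturbed resolvent factors $\calR_0(z)v_1 M(z)^{-1}v_2\calR_0(z)$ enter, since those carry nontrivial amplitude functions in $z$ that preclude a closed-form Fresnel evaluation. You already anticipate this in your closing remark, which is apt.
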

\begin{proof} 
We first begin by splitting the evolution operator into low and high energy parts\footnote{Symbols like $\chi(\calH_0 - \mu I)$ are only used in a formal way to represent the cut-off $\chi(z^2)$ in the $z$-integrals, where they arise.}:
\begin{equation}\label{eqn: free evolution high low split}
	\begin{split}
		e^{it\calH_0}P_{\mathrm{s}}^+(x,y) &= e^{it\calH_0}\chi(\calH_0 - \mu I)P_{\mathrm{s}}^+(x,y) 		+e^{it\calH_0}(1-\chi(\calH_0 - \mu I))P_{\mathrm{s}}^+(x,y)\\
		& = \frac{e^{it\mu}}{2\pi}\int_\R e^{itz^2+iz\vert x - y \vert} \chi(z^2) \,\ud z \underline{e}_{11} + \frac{e^{it\mu}}{2\pi}\int_\R e^{itz^2+iz\vert x - y \vert} (1-\chi(z^2)) \,\ud z \underline{e}_{11},
	\end{split}
\end{equation}
where $\chi(z)$ is a standard smooth, even, non-negative cut-off function satisfying $\chi(z) = 1$ for $\vert z \vert \leq 1$ and $\chi(z) =0 $ for $ \vert z \vert \geq 2$. 

In the high energy part in \eqref{eqn: free evolution high low split}, following the ideas from \cite{04GoldbergSchlag} \cite{07Goldberg}, we prove the estimate
\begin{equation}\label{eqn: free high energy estimate}
	\left \vert \int_\R e^{itz^2+iz\vert x - y \vert} (1-\chi(z^2))dz \right\vert \lesssim \min\{\vert t \vert^{-\frac{1}{2}},\vert t \vert^{-\frac{3}{2}}\langle x \rangle \langle y \rangle\}.
\end{equation}
For a more rigorous treatment, we instead use a truncated cutoff $\chi_L(z) = (1-\chi(z^2))\chi(z/L)$, where $L \geq 1$, and we prove the uniform estimate
\begin{equation}\label{eqn: free evolution high energy bound}
 \sup_{L \geq 1} \left \vert \int_\R  e^{itz^2 + iz\vert x - y \vert} \chi_L(z) \, \ud z \right\vert \leq C\min\{\vert t \vert^{-\frac{1}{2}},\vert t \vert^{-\frac{3}{2}}\langle x \rangle \langle y \rangle\},
\end{equation}
with a constant $C>0$ independent of $L$. This estimate will imply \eqref{eqn: free high energy estimate}. 
Indeed for any $\vert t \vert >0$, by the Plancherel's identity, we have 
\begin{equation*}
		\sup_{a \in \R}\left\vert \int_\R  e^{itz^2+iaz} \chi_L(z) \,\ud z \right \vert = \sup_{a \in \R} \left \vert \int_{\R} \calF^{-1}{[e^{itz^2+iaz}]} (\xi ) \calF{[\chi_L(z)]}(\xi)  \,\ud \xi \right \vert \leq C\vert t \vert^{-\frac{1}{2}} \Vert \calF{[\chi_L]} \Vert_{L_\xi^1(\R)}.
\end{equation*}
Here, we use that the Fourier transform of the tempered distribution $e^{itz^2+iaz}$ has $\vert t \vert^{-\frac{1}{2}}$ decay. Using the definition of $\chi_L$, the scaling properties of the Fourier transform, and Young's convolution inequality, we obtain
\begin{equation}
	\begin{split}
		\Vert \calF{[\chi_L]} \Vert_{L_\xi^1(\R)} & \leq \Vert \calF[\chi(z/L)]\Vert_{L_\xi^1(\R)} + \Vert \calF[\chi(z/L)]\Vert_{L_\xi^1(\R)}\Vert \calF[\chi(z^2)]\Vert_{L_\xi^1(\R)}\\
		&\leq C \Vert L \calF[\chi](L\xi)\Vert_{L_\xi^1(\R)} =  C \Vert \calF[\chi](\xi)\Vert_{L_\xi^1(\R)} \leq C \Vert \chi \Vert_{W^{2,1}(\R)}\lesssim 1.
	\end{split}
\end{equation}
For the high-energy weighted dispersive estimate, we use integration by parts to find that 
\begin{equation*}
	\begin{split}
		&\left \vert \int_\R e^{itz^2}e^{iz \vert x - y \vert)}\chi_L(z) \,\ud z \right \vert \leq C \vert t \vert^{-1} \left \vert \int_{\R} e^{itz^2}\partial_z\left( e^{iz\vert x - y \vert} z^{-1}\chi_L(z) \right) \,\ud z\right \vert.
	\end{split}
\end{equation*}
When the derivative falls onto $e^{iz \vert x - y \vert}$, the weights $\langle x \rangle\langle y \rangle$ appear, whereas the term $z^{-1}\chi_L(z)$ is smooth since $\chi_L$ is compactly supported away from the interval $[-1,1]$. By following the previous argument, we conclude the $\calO(\vert t \vert^{-\frac{3}{2}}\langle x \rangle \langle y \rangle)$ bound for \eqref{eqn: free evolution high energy bound} in the high-energy regime. 

Next we turn to the low-energy estimates. For the low-energy unweighted estimate, we employ Lemma~\ref{lemma: van der corput estimate} to obtain
\begin{equation}\label{eqn: free low energy unweighted estimate}
	\left \vert \int_{\R} e^{itz^2+iz\vert x -y \vert}\chi(z^2) \,\ud z \right \vert \leq C \vert t \vert^{-\frac{1}{2}} \Vert \partial_z \chi(z^2) \Vert_{L^1(\R)} \leq C \vert t \vert^{-\frac{1}{2}}.
\end{equation}
On the other hand, for the low-energy weighted estimate, we observe that by  Lemma~\ref{lemma: stationary phase fresnel}, 
\begin{equation*}
	\left \vert	\int_{\R} e^{itz^2+iz\vert x -y \vert}\chi(z^2) \,\ud z - \frac{\sqrt{2\pi}}{\sqrt{-2it}} e^{-i\frac{x^2}{4t}}e^{-i\frac{y^2}{4t}}  \right \vert \leq C \vert t \vert^{-\frac{3}{2}} \langle x \rangle \langle y \rangle.
\end{equation*}
Hence, using that $\underline{e}_{11} = \underline{e}_1\underline{e}_1^\top$, we arrive at the kernel estimate
\begin{equation}\label{eqn: free low energy weighted estimate}
	\left \vert e^{it\calH_0}\chi(\calH_0 - \mu)P_\mathrm{s}^+(x,y) - F_t^0(x,y) \right \vert \leq C \vert t \vert^{-\frac{3}{2}}\langle x \rangle \langle y \rangle,
\end{equation}
where $F_t^0$ is given by \eqref{eqn: formula F_t0}. Thus, by combining the high energy bounds \eqref{eqn: free high energy estimate} and the low energy bounds \eqref{eqn: free low energy unweighted estimate} - \eqref{eqn: free low energy weighted estimate}, we conclude the dispersive estimates  \eqref{eqn: free unweighted estimate} and \eqref{eqn: free weighted estimate}.
\end{proof} 
\section{Symmetric resolvent identity}
By assumption (A1), we can factorize the matrix potential
\begin{equation}\label{eqn: def of V,v_1,v_2}
	\calV = -\sigma_3 v v = v_1 v_2,
\end{equation}
with
\begin{equation*}
	v_1 = -\sigma_3 v := \begin{bmatrix}
		-a & -b \\ b & a
	\end{bmatrix}\mand 
	 v_2 = v := \begin{bmatrix}
		a & b \\ b & a
	\end{bmatrix},
\end{equation*}
where 
\begin{equation*}
	a := \frac{1}{2}\Big(\sqrt{V_1+V_2} + \sqrt{V_1 - V_2}\Big) \mand 	b := \frac{1}{2}\Big(\sqrt{V_1+V_2} - \sqrt{V_1 - V_2}\Big).
\end{equation*}
It will be helpful in later sections to keep in mind that 
\begin{equation}\label{identity V_1 V_2}
	V_1 = a^2 + b^2, \quad V_2 = 2ab.
\end{equation}
We denote the resolvent of $\calH = \calH_0 + \calV$ by $(\calH-z)^{-1}$ for $z \in \rho(\calH)$. The resolvent identity states that 
\begin{equation*}
 	(\calH - z)^{-1} = \big(I+(\calH_0 -z)^{-1}\calV\big)^{-1}(\calH_0-z)^{-1}, \quad \forall z \in \rho(\calH_0) \cap \rho(\calH).
\end{equation*}
This identity was used in \cite{06ErdoganSchlag} to establish that there is a limiting absorption principle for the resolvent of $\calH$ on the semi-axes $(-\infty,-\mu)\cup (\mu,\infty)$ in the  weighted $L^2$-spaces $X_{\sigma} \rightarrow  X_{-\sigma}$, $\sigma>\frac{1}{2}$. Note that the lemma below applies in any spatial dimension. 
\begin{lemma}(\cite[Lemma 4-Corollary 6]{06ErdoganSchlag}, see also the proof in \cite[Lemma 6.8]{06KriegerSchlag})\label{lemma: LAP} Suppose assumptions (A1) -- (A4) hold. Then, the following holds.
	\
	\begin{enumerate}
		\item For $\sigma > \frac{1}{2}$, and $\vert \lambda \vert > \mu$, the operator
		\begin{equation}
			\big(\calH_0 - (\lambda \pm i0)\big)^{-1}\calV: X_{-\sigma} \to  X_{-\sigma}
		\end{equation}
		is compact and $I + \big(\calH_0 - (\lambda \pm i0)\big)^{-1}\calV$ is boundedly invertible on $X_{-\sigma}$. 
		\item For $\sigma>\frac{1}{2}$ and $\lambda_0>\mu$ arbitrary, we have
		\begin{equation}
			\sup_{\vert \lambda \vert \geq \lambda_0, \eps > 0} \vert \lambda \vert^{\frac{1}{2}} \left\Vert \big(\calH - (\lambda \pm i\eps)\big)^{-1} \right\Vert_{X_\sigma \to X_{-\sigma}}<\infty. 
		\end{equation}
		\item For $\vert \lambda \vert > \mu$, define
		\begin{equation}\label{eqn: resolvent identity 1}
			\big(\calH - (\lambda \pm i0) \big)^{-1} := \Big(I+ \big(\calH_0 - (\lambda \pm i0)\big)^{-1}\calV\Big)^{-1} \big(\calH_0 -(\lambda \pm i0) \big)^{-1}.
		\end{equation}
		Then, as $\eps \searrow 0$,
		\begin{equation}
			\left\Vert \big(\calH - (\lambda\pm i\eps) \big)^{-1} - \big( \calH - (\lambda \pm i0) \big)^{-1} \right\Vert_{X_\sigma \to X_{-\sigma}} \longrightarrow 0
		\end{equation}
		for any $\sigma > \frac{1}{2}$.
	\end{enumerate}
\end{lemma}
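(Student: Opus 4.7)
The strategy is to transfer the scalar Agmon-type limiting absorption principle (LAP) for $-\partial_x^2$ to the block-diagonal free operator $\calH_0$ via \eqref{eqn: R_0 1}--\eqref{eqn: R_0 2}, and then push through to $\calH$ using the resolvent identity \eqref{eqn: resolvent identity 1}. First, I would record the scalar LAP: for $\sigma>\tfrac12$ the limits $R_0(\zeta^2\mp i0)$ exist in $B(\jap{x}^{-\sigma}L^2,\jap{x}^{\sigma}L^2)$, depend norm-continuously on $\zeta\in\R\setminus\{0\}$, and obey a $\jap{\zeta}^{-1}$ bound. Because \eqref{eqn: R_0 1} expresses $(\calH_0-z)^{-1}$ as a block-diagonal combination of two scalar resolvents---with exactly one block requiring a $\pm i0$ boundary value for $\vert\lambda\vert>\mu$ and the other holomorphic at a real negative shifted spectral parameter---this immediately yields $\Vert(\calH_0-(\lambda\pm i0))^{-1}\Vert_{X_\sigma\to X_{-\sigma}}\lesssim \vert\lambda\vert^{-1/2}$ for $\vert\lambda\vert>\mu$, together with norm-continuity in $\eps\geq 0$.

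For part (1), the exponential decay (A3) lets $\calV$ map $X_{-\sigma}\to X_\sigma$ for every $\sigma$, so $(\calH_0-(\lambda\pm i0))^{-1}\calV$ is bounded on $X_{-\sigma}$. Compactness follows by factoring through $H^2_{\mathrm{loc}}$ (elliptic regularity in each scalar block) combined with Rellich-type compactness against the decay weights. Bounded invertibility of $I+(\calH_0-(\lambda\pm i0))^{-1}\calV$ then reduces by Fredholm alternative to injectivity: any kernel element $\vec{\psi}\in X_{-\sigma}$ would solve $\calH\vec{\psi}=\lambda\vec{\psi}$ distributionally while inheriting an outgoing/incoming radiation condition from the kernel formula \eqref{eqn: R_0 2}. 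A componentwise bootstrap---the scalar block at negative shifted parameter is exponentially damped for free, and the oscillatory block is promoted to $L^2$ by the one-dimensional version of Agmon's absence-of-positive-resonances argument---would place $\vec{\psi}\in L^2\times L^2$, contradicting (A4).

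For parts (2) and (3) I would split the range into a high-energy regime $\vert\lambda\vert\geq\Lambda$ and an intermediate regime $\lambda_0\leq \vert\lambda\vert\leq\Lambda$. In the high-energy regime, $\Vert(\calH_0-(\lambda\pm i\eps))^{-1}\calV\Vert_{X_{-\sigma}\to X_{-\sigma}}\to 0$ as $\vert\lambda\vert\to\infty$ uniformly in $\eps\geq 0$, so a convergent Neumann series furnishes a uniform bound on the inverse, and composing with $\Vert(\calH_0-(\lambda\pm i\eps))^{-1}\Vert_{X_\sigma\to X_{-\sigma}}\lesssim \vert\lambda\vert^{-1/2}$ delivers the claimed weighted bound. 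In the intermediate regime, $\lambda\mapsto(\calH_0-(\lambda\pm i0))^{-1}\calV$ is norm-continuous into the compact operators on $X_{-\sigma}$, and part (1) provides pointwise invertibility, so a standard compactness argument produces a uniform bound on the inverse; part (3) then follows from norm-continuity of the scalar Agmon resolvent in $\eps$ together with continuity of inversion on the open set of invertible operators.

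The main obstacle is the injectivity step in part (1): ruling out distributional solutions $\vec{\psi}\in X_{-\sigma}$ to $(\calH-\lambda)\vec{\psi}=0$ at nonthreshold energies requires a careful bootstrap that upgrades bounded outgoing/incoming solutions to genuine $L^2\times L^2$ eigenfunctions. This is delicate because $\calH$ is non-self-adjoint and the matrix radiation condition couples an oscillatory scalar block to an exponentially decaying one; Agmon's scalar absence-of-embedded-eigenvalues argument must be applied componentwise after extracting the radiation data from \eqref{eqn: R_0 1}--\eqref{eqn: R_0 2}, and it is at this step that hypothesis (A4) enters essentially.
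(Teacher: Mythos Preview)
The paper does not give its own proof of this lemma; it is quoted as a known result from \cite{06ErdoganSchlag} and \cite{06KriegerSchlag}. Your plan is a faithful outline of the standard argument in those references: transfer the scalar Agmon LAP to the block-diagonal $\calH_0$ via \eqref{eqn: R_0 1}--\eqref{eqn: R_0 2}, obtain compactness of $(\calH_0-(\lambda\pm i0))^{-1}\calV$ on $X_{-\sigma}$ from decay of $\calV$ plus local elliptic regularity and Rellich, reduce invertibility to injectivity by Fredholm, and handle parts (2)--(3) by a high-energy Neumann series together with a compactness/continuity argument on compact $\lambda$-intervals.

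The one place your sketch is genuinely vague is the mechanism in the injectivity bootstrap. In the cited proofs the upgrade of an outgoing $X_{-\sigma}$ solution to an honest $L^2\times L^2$ eigenfunction is carried out not by a purely scalar Agmon argument applied blockwise, but via a pairing that exploits the conjugation symmetry $\calH^*=\sigma_3\calH\sigma_3$ from \eqref{eqn: symmetry property}: one pairs $\vec\psi$ against $\sigma_3\vec\psi$ and reads off from the resulting Wronskian-type identity that the outgoing amplitudes in the oscillatory block must vanish. Your plan correctly flags this step as the main obstacle, but the phrase ``one-dimensional version of Agmon's absence-of-positive-resonances argument'' applied ``componentwise'' understates the role of the matrix symmetry---the two components are coupled through $\calV$, and it is precisely the $\sigma_3$-conjugation that makes the boundary terms real and forces them to vanish. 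Once that is made explicit, the rest of your outline goes through as written.
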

We recall the following spectral representation of $e^{it\calH}$ from \cite{06ErdoganSchlag}.
\begin{lemma} \label{lemma: spectral representation} (\cite[Lemma 12]{06ErdoganSchlag})
	Under assumptions (A1) -- (A6), there is the representation
	\begin{equation}\label{eqn: spectral representation}
		e^{it\calH}= \frac{1}{2\pi i}\int_{\vert \lambda \vert \geq \mu} e^{it\lambda}\left[\big(\calH - (\lambda+i0)\big)^{-1} - \big(\calH -(\lambda - i0)\big)^{-1}\right] \,\ud \lambda + \sum_{j} e^{it\calH}P_{z_j},
	\end{equation}
	where the sum runs over the entire discrete spectrum and $P_{z_j}$ is the Riesz projection corresponding to the eigenvalue $z_j$. The formula \eqref{eqn: spectral representation} and the convergence of the integral are to be understood in the sense that if $\phi,\psi\in [W^{2,2}(\R) \times W^{2,2}(\R)] \cap [\langle x\rangle^{-1-} L^2(\R) \times \langle x\rangle^{-1-} L^2(\R)]$,  then
	\begin{equation}
		\begin{split}
			\langle e^{it\calH}\phi,\psi\rangle &= \lim_{R \to \infty} \frac{1}{2\pi i} \int_{R \geq \vert \lambda \vert \geq \mu} e^{it\lambda}\left\langle \left[\big(\calH-(\lambda+i0)\big)^{-1}-\big(\calH - (\lambda - i0)\big)^{-1}\right]\phi,\psi\right\rangle \,\ud \lambda \\
			&\quad + \sum_{j} \langle e^{it\calH}P_{z_j}\phi,\psi \rangle	,	
		\end{split}
	\end{equation}
	for all $t \in \R$. 
\end{lemma}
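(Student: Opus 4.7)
The plan is to combine the Riesz projection decomposition of $\calH$ with a contour integral representation of $e^{it\calH}$ wrapping around the essential spectrum, then deform the contour to the real axis using the limiting absorption principle from Lemma~\ref{lemma: LAP}. Since each isolated eigenvalue $z_j$ has finite algebraic multiplicity by Lemma~\ref{lemma: 06ESLem3}, the associated Riesz projection $P_{z_j} = \frac{1}{2\pi i}\oint_{\gamma_j}(z-\calH)^{-1}\,dz$ is finite rank; setting $P_{\mathrm{d}} = \sum_j P_{z_j}$ and $P_{\mathrm{s}} = I-P_{\mathrm{d}}$, the discrete piece $e^{it\calH}P_{\mathrm{d}} = \sum_j e^{it\calH}P_{z_j}$ is a finite sum of finite-rank operators that lands directly on the right-hand side of \eqref{eqn: spectral representation}, so the task reduces to the integral representation of $e^{it\calH}P_{\mathrm{s}}$.

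For the continuous part I would fix test vectors $\phi,\psi$ in the specified weighted Sobolev class and represent the sesquilinear form $\langle e^{it\calH}P_{\mathrm{s}}\phi,\psi\rangle$ as a contour integral of $e^{itz}\langle(\calH-z)^{-1}P_{\mathrm{s}}\phi,\psi\rangle$ along a keyhole-type contour $\Gamma_{R,\epsilon}$ that hugs each of the two half-lines of the essential spectrum at imaginary height $\pm\epsilon$ and closes off at $|\Re z|=R$. One natural way to justify this starting identity is to first establish it for a regularized functional calculus, e.g.\ approximating $e^{itz}$ by $e^{itz-\delta z^2}\chi_N(z)$ with a smooth compactly supported $\chi_N$, where the Dunford--Riesz calculus applies unambiguously on the bounded portions of the spectrum, and then remove the regularization by dominated convergence against the LAP bounds of Lemma~\ref{lemma: LAP}. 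The symmetry relations \eqref{eqn: symmetry of H} together with assumption (A4) guarantee that off the real axis the only singularities of $(\calH-z)^{-1}$ enclosed by $\Gamma_{R,\epsilon}$ are the eigenvalues $z_j$, which have already been subtracted via $P_{\mathrm{s}}$.

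Taking $\epsilon\downarrow 0$ is then handled by Lemma~\ref{lemma: LAP}(iii): the boundary values $(\calH-(\lambda\pm i0))^{-1}\colon X_\sigma\to X_{-\sigma}$ with $\sigma>\tfrac12$ are norm-continuous in $\epsilon$, so the two horizontal portions of $\Gamma_{R,\epsilon}$ yield precisely the jump integral on the right-hand side of \eqref{eqn: spectral representation} paired against the test vectors (which sit in $X_\sigma$ with $\sigma>1$ by hypothesis, more than enough room for the LAP). The two vertical segments at $|\Re z|=R$ collapse to zero with $\epsilon$ for fixed $R$ by analyticity of the resolvent there. For the $R\to\infty$ limit the pointwise bound $|\langle(\calH-(\lambda\pm i0))^{-1}\phi,\psi\rangle|\lesssim|\lambda|^{-1/2}$ from Lemma~\ref{lemma: LAP}(ii) is not integrable, so I would integrate by parts in $\lambda$, using the Hilbert identity $\partial_\lambda(\calH-(\lambda\pm i0))^{-1}=[(\calH-(\lambda\pm i0))^{-1}]^2$ and the $W^{2,2}$-hypothesis on $\phi,\psi$ (which effectively trades two powers of $\lambda$ at infinity for $H^2$-regularity via $(\calH-z)^{-1}=z^{-1}[(\calH-z)^{-1}\calH - I]$) to obtain a convergent tail in the sense specified by the lemma.

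The main obstacle will be this large-$|\lambda|$ convergence together with the rigorous passage between the functional-calculus definition of $e^{it\calH}P_{\mathrm{s}}$ on the non-self-adjoint, unbounded operator $\calH$ and the contour integral representation, since one cannot directly invoke the spectral theorem. The threshold behavior at $\pm\mu$ is not a serious issue: the $|\lambda|^{-1/2}$ bound becomes $(|\lambda|-\mu)^{-1/2}$ near the edges, which is locally integrable in one dimension, and assumption (A4) rules out additional singularities on $(-\infty,-\mu)\cup(\mu,\infty)$. The delicate point is really controlling the two limits $\epsilon\downarrow 0$ and $R\to\infty$ \emph{simultaneously} in the sesquilinear pairing, which is why the weighted and Sobolev class restriction on $\phi,\psi$ is needed.
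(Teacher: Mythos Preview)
The paper does not give its own proof of this lemma; it is stated with a direct citation to \cite[Lemma~12]{06ErdoganSchlag} and no argument is supplied. Your outline is essentially the standard proof strategy used in that reference: split off the discrete spectrum via Riesz projections, represent the continuous part by a contour integral of $e^{itz}(\calH-z)^{-1}$ surrounding $(-\infty,-\mu]\cup[\mu,\infty)$, collapse the contour onto the real axis using the limiting absorption principle of Lemma~\ref{lemma: LAP}, and control the large-$|\lambda|$ tail by trading decay for the $W^{2,2}$ regularity of the test vectors through the algebraic identity $(\calH-z)^{-1}=z^{-1}\big[(\calH-z)^{-1}\calH-I\big]$. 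You have correctly identified that the genuine subtlety is defining $e^{it\calH}$ for the non-self-adjoint $\calH$ and justifying the contour representation without a spectral theorem; in \cite{06ErdoganSchlag} this is handled via the Hille--Yosida theory (noting that $i\calH$ generates a $C_0$-group because $\calH$ has spectrum contained in $\R\cup i\R$ with uniform resolvent bounds off the spectrum) rather than the regularized functional calculus you propose, but your approximation scheme is a legitimate alternative route to the same conclusion.
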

We write $P_{\mathrm{s}} = P_{\mathrm{s}}^+ + P_{\mathrm{s}}^-$, where the signs $\pm$ refer to the positive and negative halves of the essential spectrum $(-\infty,-\mu]\cup [\mu,\infty)$. In the following sections, we will focus on the analysis on the positive semi-axis part of the  essential spectrum. We can treat the negative semi-axis of the essential spectrum by taking advantage of the symmetry properties of $\calH$, see Remark~\ref{remark: Ps^-} below. In view of the spectral representation of $e^{it\calH}$ from Lemma~\ref{lemma: spectral representation}, we use the change of variables $\lambda \mapsto \lambda = \mu+z^2$  with $0<z<\infty$ to write 
\begin{equation*}
	e^{it\calH}P_{\mathrm{s}}^+ = \frac{e^{it\mu}}{\pi i} \int_0^\infty e^{itz^2} z \left[\big(\calH - (\mu + z^2 + i0))^{-1} - (\calH - (\mu + z^2 - i0)\big)^{-1}\right] \,\ud z.
\end{equation*}

For the upcoming dispersive estimates, it is convenient to first open up the domain of integration for the above integral to the entire real line by means of analytic continuation for the perturbed resolvent. Following the framework of Section 5 in \cite{06ErdoganSchlag}, we introduce the operator
\begin{equation}
	\begin{split}
	&\calR(z) := (\calH - (\mu + z^2 + i0))^{-1}, \quad \text{for $z>0$},\\		
	&\calR(z) := (\calH - (\mu + z^2 - i0))^{-1} = \overline{(\calH - (\mu + z^2 + i0))^{-1}}, \quad \text{for $z<0$},
	\end{split}
\end{equation}
so that
\begin{equation}\label{eqn: stone's for H}
e^{it\calH}P_{\mathrm{s}}^+ = \frac{e^{it\mu}}{\pi i}\int_\R e^{itz^2} z \calR(z) \,\ud z.
\end{equation}
Here, the integral should be understood in the principal value sense due to the pole associated with the resolvent $\calR(z)$ at the origin. We also set
\begin{equation}
	\begin{split}
		&\calR_0(z) := (\calH_0 - (\mu + z^2 + i0))^{-1}, \quad \text{for $z>0$},\\		
		&\calR_0(z) := \overline{(\calH_0 - (\mu + z^2 + i0))^{-1}}, \quad \text{for $z<0$}.
	\end{split}	
\end{equation}
In particular, with this definition, we have by \eqref{eqn: def resolvent H_0} for all $z \in \R \setminus \{0\}$ that
\begin{equation}\label{eqn: free resolvent}
	\calR_0(z)(x,y) = (\calH_0 - (\mu + z^2 + i0))^{-1}(x,y) = \begin{bmatrix}
		\frac{ ie^{ i z \vert x - y \vert}}{2 z} & 0 \\ 0 & -\frac{e^{-\sqrt{z^2+2\mu}\vert x-y\vert}}{2\sqrt{z^2+2\mu}}
	\end{bmatrix}.
\end{equation}

As in \cite{06ErdoganSchlag}, we employ the symmetric resolvent identity 
\begin{equation}\label{eqn: symmetric resolvent identity}
	\calR(z) = \calR_0(z) - \calR_0(z)v_1(M(z))^{-1}v_2\calR_0(z),
\end{equation}
where
\begin{equation}\label{eqn: def of M}
	M(z) = I + v_2 \calR_0(z)v_1, \quad z \in \R \setminus \{0\}.
\end{equation}
By inserting the above identity, one checks that 
\begin{equation}\label{eqn: stone's formula for P_s^+}
	e^{it\calH}P_{\mathrm{s}}^+ = \frac{e^{it\mu}}{\pi i} \int_{\R} e^{itz^2}z\mathcal{R}_0(z) \,\ud z - \frac{e^{it\mu}}{\pi i} \int_{\R} e^{itz^2}z\mathcal{R}_0(z)v_1 \big(M(z)\big)^{-1}v_2\mathcal{R}_0(z) \,\ud z. 
\end{equation}
In the next section, we will investigate the invertibility of the matrix operator $M(z)$ near the origin. We give the following remark for the evolution operator in the negative part of the essential spectrum.

\begin{remark} \label{remark: Ps^-}
Using the identities
	\begin{equation}\label{eqn: H = -s_1HS_1}
		\calH = -\sigma_1 \calH \sigma_1, \quad \calV= -\sigma_1 \calV \sigma_1,
	\end{equation}
	we infer that
	\begin{equation}\label{eqn: P_s^-}
		e^{it\calH}P_{\mathrm{s}}^{-} = \sigma_1 e^{-it\calH}P_{\mathrm{s}}^{+}\sigma_1.
	\end{equation}
	Furthermore, since these identities also hold for $\calH_0$, the analogue of Proposition~\ref{prop: free estimate} for the weighted estimate of the free evolution $e^{it\calH_0}P_{\mathrm{s}}^{-}$ is given by
	\begin{equation}
		\left \Vert \langle x \rangle^{-1} \left( e^{it\calH_0}P_{\mathrm{s}}^-   - \widetilde{F}_t^0\right)\vecu \,\right\Vert_{L_x^\infty} \leq C \vert t \vert^{-\frac{3}{2}} \left \Vert \langle x \rangle \vecu \,\right\Vert_{L_x^1}, \quad \vert t \vert \geq 1,
	\end{equation}
	where
	\begin{equation}
		\widetilde{F}_t^0(x,y) := \frac{e^{-it \mu}}{\sqrt{4\pi i t}}e^{i\frac{ x ^2}{4t}}\underline{e}_2e^{i\frac{ y^2}{4t}}\underline{e}_2^\top.
	\end{equation}
	Note that $\widetilde{F}_t^0 = \sigma_1 F_{-t}^0 \sigma_1$.
\end{remark}

\section{Laurent expansion of the resolvent near the threshold}

In this section we study asymptotic expansions of the perturbed resolvent operators near the thresholds of the essential spectrum, closely following the framework of the seminal paper \cite{00JensenNenciu} for the scalar Schr\"odinger operators $H = -\partial_x^2 + V$ on the real line. As specified in the introduction, we are interested in the irregular case, where the matrix Schr\"odinger operator $\calH$ exhibits a threshold resonance. See Definition~\ref{def: regular points} for a precise definition. This means that there exist globally bounded non-trivial solutions of $\calH \Psi = \pm \mu \Psi$.  
In this context, we mention that the threshold regularity can also be characterized by the scattering theory introduced by \cite{95BusleavPerelman}; see Lemma~5.20 of \cite{06KriegerSchlag}. We begin with the terminology used in \cite{05Schlag}.

\begin{definition}[Absolutely bounded operators]\label{def: absolutely bounded operators}
	We say an operator $A\colon L^2(\R)\times L^2(\R) \to L^2(\R)\times L^2(\R)$ with an integral kernel $A(x,y)\in \C^{2 \times 2}$ is absolutely bounded if the operator with the kernel $\vert A(x,y) \vert := (\vert A(x,y)_{i,j}\vert)_{i,j=1}^{2} \in \R^{2 \times 2}  $ is bounded from $L^2(\R)\times L^2(\R) \to L^2(\R)\times L^2(\R)$. In particular,  Hilbert-Schmidt and finite rank operators are absolutely bounded. 
\end{definition} 
To investigate the asymptotic expansions of the operator $M(z)$ (c.f.~\eqref{eqn: def of M}), we start with the following Taylor expansions of the free resolvent around the origin $z=0$.  
\begin{lemma}\label{lemma: expansion of R_0} Let $z_0 := \min \{1,\sqrt{2\mu}\}$. For any $0 < \vert z\vert < z_0$, we have the following expansion
	\begin{equation}\label{eqn: expansion of R_0}
\begin{split}
    \calR_0(z)(x,y)     &= \frac{i}{2z}\underline{e}_{11} + \calG_0(x,y) + z\calG_1(x,y) + E(z)(x,y)\\
    \end{split}		
	\end{equation}
	where
	\begin{align}
		\calG_0(x,y) &:= \begin{bmatrix}
			- \frac{\vert x - y \vert}{2} & 0 \\ 0 &	- \frac{e^{-\sqrt{ 2\mu}\vert x - y \vert }}{2\sqrt{2\mu}}
		\end{bmatrix}\label{eqn: G_0 def},\\
		\calG_1(x,y) &:=   \begin{bmatrix}
		\frac{\vert x - y \vert^2}{4i} & 0 \\ 0 & 0
	\end{bmatrix}\label{eqn: G_1 def},
	\end{align}
and $E(z)$ is an error term which satisfies the estimate 
\begin{equation}\label{eqn: error estimate for free resolvent}
  \vert z\vert^{k} \, \vert \partial_z^k E(z)(x,y)\vert \leq C_{\mu,k}\, \vert z \vert^2 \langle x \rangle^{3+k} \langle y \rangle^{3+k}, \quad \forall \ k=0,1,2,
\end{equation}
for any $\vert z \vert < z_0$.
\end{lemma}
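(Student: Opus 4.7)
The plan is to argue entry-by-entry using the explicit diagonal formula for $\calR_0(z)$ recorded in \eqref{eqn: free resolvent}. Since the $2\times 2$ matrix is diagonal, the expansion decouples into two independent scalar computations, one for the $(1,1)$ entry $\tfrac{i e^{iz|x-y|}}{2z}$ and one for the $(2,2)$ entry $-\tfrac{e^{-\sqrt{z^2+2\mu}|x-y|}}{2\sqrt{z^2+2\mu}}$. The full statement then follows by packaging the two scalar expansions into the matrix form $\tfrac{i}{2z}\underline{e}_{11}+\calG_0+z\calG_1+E(z)$.

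For the $(1,1)$ entry, I would Taylor-expand the oscillatory exponential,
\[
e^{iz|x-y|} \;=\; 1 + iz|x-y| + \tfrac{(iz|x-y|)^2}{2} + r(z;x,y),
\]
with $r(z;x,y)=\tfrac{(iz|x-y|)^3}{2}\int_0^1 (1-s)^2 e^{isz|x-y|}\,\ud s$. Multiplying by $\tfrac{i}{2z}$, the first three terms produce exactly the contributions $\tfrac{i}{2z}$, $-\tfrac{|x-y|}{2}$ (the $(1,1)$ entry of $\calG_0$), and $\tfrac{z|x-y|^2}{4i}$ (the $(1,1)$ entry of $\calG_1$); the remainder $\tfrac{i}{2z}r(z;x,y)$ is the $(1,1)$ entry of $E(z)$ and satisfies $\bigl|\tfrac{i}{2z}r(z;x,y)\bigr|\lesssim |z|^2|x-y|^3\lesssim |z|^2\jap{x}^3\jap{y}^3$. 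Differentiating the integral formula for $r$ in $z$ produces the bound $|\partial_z^k r(z;x,y)|\lesssim |z|^{3-k}|x-y|^{3}$ for $k=0,1,2$, which after multiplying by $|z|^k$ and dividing by $|z|$ yields the required estimate $|z|^k|\partial_z^k (\tfrac{i}{2z}r)|\lesssim |z|^2\jap{x}^{3+k}\jap{y}^{3+k}$.

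For the $(2,2)$ entry, the key observation is that on the disk $\{|z|<z_0\}\subset\{|z|<\sqrt{2\mu}\}$, the map $z\mapsto\sqrt{z^2+2\mu}$ is real-analytic and bounded below by, say, $\sqrt{2\mu}/2$, so the whole entry is a smooth (in fact analytic and even) function of $z$. Its value at $z=0$ is exactly $-\tfrac{e^{-\sqrt{2\mu}|x-y|}}{2\sqrt{2\mu}}$, matching the $(2,2)$ slot of $\calG_0$, and the linear-in-$z$ Taylor coefficient vanishes by evenness in $z$, which is why $\calG_1$ has no $(2,2)$ contribution. The $(2,2)$ entry of $E(z)$ is then the second-order Taylor remainder; by the usual integral remainder formula and the fact that $z$-derivatives of $e^{-\sqrt{z^2+2\mu}|x-y|}/\sqrt{z^2+2\mu}$ bring down polynomial factors in $|x-y|$ that are tamed by the surviving exponential $e^{-\sqrt{2\mu}|x-y|/2}$, I get $|\partial_z^k E(z)_{22}|\lesssim_{\mu,k}|z|^{2-k}$ uniformly in $x,y$ for $k=0,1,2$, which is far stronger than what is required.

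I do not anticipate any essential obstacle: the lemma is a direct Taylor expansion of two explicit scalar functions, with the only mild subtlety being that one must take the $z$-derivatives through the Taylor remainder using the integral formula so as to retain the correct $|z|^{2-k}$ gain. Combining the two entries gives the displayed matrix expansion and the remainder bound \eqref{eqn: error estimate for free resolvent}.
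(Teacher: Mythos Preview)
Your approach is exactly the paper's: argue entry-by-entry via Taylor (Laurent) expansion of the explicit diagonal kernel, using the integral remainder formula for both entries. One bookkeeping slip to clean up: when $\partial_z$ hits the exponential inside the integral remainder $r$, each derivative produces an extra factor of $|x-y|$, so the correct intermediate bound is $|\partial_z^k r|\lesssim |z|^{3-k}\langle x-y\rangle^{3+k}$ rather than $|x-y|^3$, and you must also run Leibniz on $\partial_z^k(\tfrac{1}{z}r)$ (or, more cleanly, write $E_{11}(z)=c\,|x-y|^3 z^2\int_0^1(1-s)^2 e^{isz|x-y|}\,\ud s$ and differentiate that directly, as the paper does); with either fix the final estimate $|z|^k|\partial_z^k E_{11}|\lesssim |z|^2\langle x\rangle^{3+k}\langle y\rangle^{3+k}$ still holds, so the conclusion is unaffected.
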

\begin{proof}
    Recall from \eqref{eqn: free resolvent} that 
    \begin{equation*}
        \calR_0(z)(x,y) = \begin{bmatrix}
            \frac{ie^{i z \vert x - y \vert}}{2 z} & 0 \\ 0 & -\frac{e^{-\sqrt{z^2+2\mu}\vert x-y\vert}}{2\sqrt{z^2+2\mu}}
        \end{bmatrix}.
    \end{equation*}
	For $0 < \vert z \vert < 1$, we have the Laurent expansion
	\begin{equation}\label{eqn: taylor expand r_1}
		\frac{i e^{i z \vert x -y \vert}}{2 z} = \frac{i}{2 z} + \frac{-\vert x - y \vert}{2} + \frac{ \vert x - y \vert^2}{4i}z  + r_1(z,\vert x - y \vert),
	\end{equation}
	where the remainder term is 
	\begin{equation*}
		r_1(z,\vert x - y\vert) := \frac{i}{2z} \tilr_1(z,\vert x - y\vert), \quad \tilr_1(z,\vert x - y\vert) := \frac{(iz\vert x - y \vert)^3}{2!}\int_0^1 e^{isz \vert x - y \vert} (1-s)^2\,\ud s.
	\end{equation*}
	By direct computation, for any $x, y \in \R$ and for any $\vert z \vert <1$, we have the estimate
	\begin{equation}\label{eqnproof: error estimate for resolvent1}
		\vert z \vert^{k} \, \vert \partial_z^k\, r_1(z,\vert x - y \vert) \vert \lesssim \vert z \vert^2 \langle x\rangle^{3+k} \langle y \rangle^{3+k}, \quad k=0,1,2.
	\end{equation}
	In the lower component of the resolvent kernel, for $ \vert z \vert < 2\mu$, we have the Taylor expansion
	\begin{equation}\label{eqn: taylor expand r_2}
	-\frac{e^{-\sqrt{z^2+2\mu}\vert x-y\vert}}{2\sqrt{z^2+2\mu}} = -\frac{e^{-\sqrt{2\mu}\vert x - y\vert}}{2\sqrt{2\mu}} + r_2(z,\vert x - y \vert),
	\end{equation}
	where we denote  the remainder term by 
	\begin{equation*}
r_2(z,\vert x - y \vert) := \frac{z^2}{2!} \int_0^1 (1-s)  (\partial_z^2\, g_\mu)(sz,\vert x - y \vert) \,\ud s, \quad 		g_\mu(z,\vert x - y \vert) := -\frac{e^{-\sqrt{z^2+2\mu}\vert x-y\vert}}{2\sqrt{z^2+2\mu}}.
	\end{equation*}
	Using the fact that for any $\eta \in \R$, $\langle \eta \rangle := (1+\eta^2)^{\frac{1}{2}}$, one has the bounds
	\begin{equation*}
		\vert \partial_\eta^k \langle \eta \rangle^{-1} \vert \leq C_k \langle \eta \rangle^{-1-k} \qquad  \vert \partial_\eta^k \langle \eta \rangle \vert \leq C_k \langle \eta \rangle^{1-k}, \quad  k =0,1,2,\ldots,
	\end{equation*}
	it follows that all derivatives of $\sqrt{z^2+2\mu}$ and $2(z^2+2\mu)^{-\frac{1}{2}}$ are uniformly bounded in $z$ up to a constant depending only on $\mu$ and the number of derivatives. Therefore, by the Leibniz formula, we have the estimate
	\begin{equation*}
		\sup_{z \in \R} \left\vert \partial_z^k \, g_\mu(z,\vert x- y \vert) \right\vert \leq C_{\mu,k} \langle x \rangle^k \langle y \rangle^k, \quad k=0,1,\ldots,4,
	\end{equation*}
	which in turn implies that
	\begin{equation}\label{eqnproof: error estimate for resolvent2}
		\vert z \vert^{k} \left \vert \partial_z^k \, r_2(z,\vert x - y \vert) \right \vert \lesssim \vert z \vert^2 \langle x \rangle^{2+k} \langle y \rangle^{2+k},\quad k=0,1,2.
	\end{equation}
	Thus, by using \eqref{eqnproof: error estimate for resolvent1} and \eqref{eqnproof: error estimate for resolvent2}, the error term given by 
	\begin{equation}
		E(z)(x,y) := \begin{bmatrix}
			r_1(z, \vert x - y \vert) & 0 \\ 0 & r_2(z,\vert x - y \vert)
		\end{bmatrix}
	\end{equation}
	satisfies \eqref{eqn: error estimate for free resolvent} as claimed.
\end{proof}
We insert the above asymptotic expansion into the operator $M(z) = I + v_2\calR_0(z)v_1$. First, we have the \textit{transfer operator} $T$ on $L^2(\R) \times L^2(\R)$ with a kernel given by 
\begin{equation}\label{eqn: operator T}
	T(x,y) = I + v_2(x) \calG_0(x,y) v_1(y).
\end{equation}
Note that $T$ is self-adjoint because
\begin{equation*}
	(v_2\calG_0v_1)^* = v_1^*\calG_0 v_2 = (-v\sigma_3)\calG_0v = v\calG_0(-\sigma_3v) = v_2\calG_0v_1.
\end{equation*}
Since the potentials $v_1$ and $v_2$ have exponential decay by assumption~(A3), it follows that $v_2\calG_0 v_1$ is a  Hilbert-Schmidt operator on $L^2(\R) \times L^2(\R)$. Hence, $T$ is a compact perturbation of the identity, and therefore the dimension of $\ker(T)$ is finite by the Fredholm alternative. Recalling the formulas for $v_1$ and $v_2$ from \eqref{eqn: def of V,v_1,v_2}, we have the identity
\begin{equation}\label{eqn: v2e11v1}
	v_2 \underline{e}_{11} v_1 = -\begin{bmatrix}
		a & 0 \\ b & 0 
	\end{bmatrix}\begin{bmatrix}
		a & b \\ 0 & 0 
	\end{bmatrix} = - \begin{bmatrix}
	a \\ b
\end{bmatrix}\begin{bmatrix}
a & b
\end{bmatrix}.
\end{equation}
Next, we define the orthogonal projection  onto the span of the vector $(a,b)^\top \in L^2(\R) \times L^2(\R)$ by 
\begin{equation}\label{eqn: operator P}
	P\begin{bmatrix}
		f_1 \\ f_2
	\end{bmatrix}(x) := \frac{\int_{\R}( a(y)f_1(y) + b(y)f_2(y))\, \ud y}{\Vert a^2 + b^2 \Vert_{L^1(\R)}}\begin{bmatrix}
		a(x) \\ b(x) 
	\end{bmatrix} = \frac{1}{\Vert V_1 \Vert_{L^1(\R)}} \langle (a,b)^\top, \vec{f}\, \rangle \begin{bmatrix}a(x) \\ b(x)\end{bmatrix}.
\end{equation}
Note that we use the identity \eqref{identity V_1 V_2} above. From \eqref{eqn: def of M}, the contribution of the singular term $\frac{i}{2z}\underline{e}_{11}$ of $\calR_0(z)$ to $M(z)$ will be associated to the following integral operator with the kernel
\begin{equation}\label{eqn: frak_g}
	\frac{i}{2z}v_2(x)\underline{e}_{11}v_1(y) = \frac{- i}{2z}\begin{bmatrix}
		a(x) \\ b(x)
	\end{bmatrix} \begin{bmatrix}
		a(y) & b(y)
	\end{bmatrix} =: g(z)P(x,y),
\end{equation}
where 
\begin{equation}\label{eqn: def g}
	g(z) := -\frac{i}{2z} \Vert V_1 \Vert_{L^1(\R)}.
\end{equation}
Lastly, we denote the orthogonal projection to the complement of the span of $(a,b)^\top$ by
\begin{equation}
	Q := I - P.
\end{equation}
In summary, we have the following proposition.
\begin{proposition}\label{prop: expansion of M} Suppose $\vert a(x) \vert, \vert b(x) \vert \lesssim \langle x \rangle^{-5.5-}$, and let $z_0 := \min \{1,\sqrt{2\mu}\}$. Then, for any $0<\vert z \vert < z_0$, we have 
	\begin{equation}\label{eqn: expansion for M}
\begin{split}
		M(z) &= g(z)P + T + zM_1 + \calM_2(z),
\end{split}
	\end{equation}
	where $M_1$ and $\calM_2(z)$ are Hilbert-Schmidt operators on $L^2(\R) \times L^2(\R)$ defined by
	\begin{align}
		M_1(x,y) &:= v_2(x)\calG_1(x,y)v_1(y) = \frac{\vert x - y \vert^2}{4i} \begin{bmatrix}
			a(x) \\ b(x)
		\end{bmatrix}\begin{bmatrix}
		a(y) & b(y)
	\end{bmatrix}, \label{eqn: def of M_1}\\
		\calM_2(z)(x,y) &:= v_2(x)E(z)(x,y)v_1(y),\label{eqn: def of calM_2}
	\end{align}
	with $G_1$ and $E(z)$ defined in Lemma~\ref{lemma: expansion of R_0}. Moreover, the error term $\calM_2(z)$ and its derivatives satisfy the absolute bound 
	\begin{equation}
		\vert z \vert^k \left\Vert \vert\partial_z^k  \calM_2(z) \vert \right\Vert_{L^2(\R) \times L^2(\R) \to L^2(\R) \times L^2(\R)} \lesssim \vert z \vert^2, \quad k =0,1,2,
	\end{equation}
	for all $\vert z \vert < z_0$.
\end{proposition}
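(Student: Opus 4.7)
The plan is to substitute the Laurent expansion of $\calR_0(z)$ from Lemma~\ref{lemma: expansion of R_0} directly into the definition $M(z)= I + v_2 \calR_0(z) v_1$ and collect terms by their order in $z$. Since $v_1$ and $v_2$ are multiplication operators (matrix-valued), the substitution commutes with the $z$-expansion and yields
\begin{equation*}
M(z) = I + \tfrac{i}{2z} v_2\, \underline{e}_{11}\, v_1 + v_2 \calG_0 v_1 + z\, v_2\calG_1 v_1 + v_2 E(z) v_1.
\end{equation*}

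First I would identify each of the four resulting pieces with the claimed operators. For the singular term, the rank-one identity \eqref{eqn: v2e11v1} gives $v_2\underline{e}_{11}v_1 = -(a,b)^\top(a,b)$, and comparing with the definition \eqref{eqn: operator P} of the orthogonal projection $P$ together with $\|a^2+b^2\|_{L^1} = \|V_1\|_{L^1}$ yields exactly $\tfrac{i}{2z}v_2\underline{e}_{11}v_1 = g(z) P$ with $g(z)$ as in \eqref{eqn: def g}. The $\calG_0$ piece combines with the identity operator to produce $T = I + v_2 \calG_0 v_1$, matching \eqref{eqn: operator T}. The $\calG_1$ piece is, by inspection of \eqref{eqn: G_1 def}, precisely $M_1$ as defined in \eqref{eqn: def of M_1}, and the remainder is defined to be $\calM_2(z) := v_2 E(z) v_1$.

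Next I would verify that $M_1$ and $\calM_2(z)$ are Hilbert–Schmidt, and hence absolutely bounded. For $M_1$, the kernel is a rank-one outer product times $|x-y|^2$, and since $a,b$ decay like $\langle x\rangle^{-5.5-}$ by hypothesis (indeed exponentially by (A3)), the quadratic weight $|x-y|^2 \lesssim \langle x\rangle^2 \langle y\rangle^2$ is absorbed with plenty of margin and the resulting kernel lies in $L^2(\R\times\R)$. For $\calM_2(z)$ and its derivatives, I would combine the pointwise bound \eqref{eqn: error estimate for free resolvent},
\begin{equation*}
|z|^k\, |\partial_z^k E(z)(x,y)| \leq C_{\mu,k}\, |z|^2 \langle x\rangle^{3+k}\langle y\rangle^{3+k},\qquad k=0,1,2,
\end{equation*}
with the decay $|v_1(x)|, |v_2(x)| \lesssim \langle x\rangle^{-5.5-}$ of the potentials. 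The product kernel is then pointwise bounded by $|z|^{2-k}\langle x\rangle^{-2.5-}\langle y\rangle^{-2.5-}$ (for $k \leq 2$), which is square integrable on $\R\times \R$, giving the Hilbert–Schmidt bound
\begin{equation*}
|z|^k\, \bigl\| |\partial_z^k \calM_2(z)| \bigr\|_{L^2\times L^2 \to L^2\times L^2} \lesssim |z|^2
\end{equation*}
uniformly for $|z|<z_0$, as claimed.

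There is no deep obstacle here: the proposition is essentially a bookkeeping step that packages the resolvent expansion into a form suitable for inverting $M(z)$ near $z=0$ in later sections. The only mildly technical point is the polynomial-versus-polynomial trade-off for $\calM_2(z)$, which forces the decay hypothesis $\langle x\rangle^{-5.5-}$ (one needs roughly $3+k+\tfrac12+$ decay from each of $v_1$ and $v_2$ with $k=2$ the binding case). The exponential decay provided by (A3) is of course more than sufficient, but I have stated the argument under the weaker polynomial hypothesis since that is all the proof requires.
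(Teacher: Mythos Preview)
Your proposal is correct and follows essentially the same route as the paper. The paper had already carried out the identification of the $g(z)P$ and $T$ pieces in the paragraphs preceding the proposition (equations \eqref{eqn: operator T}--\eqref{eqn: def g}), so its proof focuses only on the explicit form of $M_1$ via \eqref{eqn: v2e11v1} and the Hilbert--Schmidt verification, which it does exactly as you do: bound $|\calG_1(x,y)|\lesssim \langle x\rangle^2\langle y\rangle^2$ and absorb the polynomial growth with the $\langle x\rangle^{-5.5-}$ decay of $a,b$, then repeat for $\calM_2(z)$ using \eqref{eqn: error estimate for free resolvent}.
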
	
\begin{proof}
The identity on the right of \eqref{eqn: def of M_1} follows from  \eqref{eqn: v2e11v1}. We recall that operators of the following type
\begin{equation*}\label{eqn: HS kernel1}
	U(x)\langle x\rangle^k\langle y\rangle^k W(y)
\end{equation*} are Hilbert-Schmidt operators on $L^2(\R)$ whenever $U$ and $W$ are smooth potentials with polynomial decay $\vert U(x) \vert, \vert W(x) \vert \lesssim \langle x \rangle^{-k-\frac{1}{2}-}$, for $k \in \N$. Hence, under the assumptions on $a(x)$ and $b(x)$, and using the fact that
\begin{equation*}
	\vert \calG_1(x,y)\vert \lesssim \vert x -y \vert^{2}\leq \langle x \rangle^{2}\langle y \rangle^{2},
\end{equation*}
it follows that $M_1$ is Hilbert-Schmidt. The same argument can be applied to the error term $\calM_2(z)$ and its derivatives using the remainder estimates in \eqref{eqn: error estimate for free resolvent} and we are done.
\end{proof}
The next definition characterizes the regularity of the endpoint $\mu$ of the essential spectrum. 
\begin{definition} \label{def: regular points}
	\begin{enumerate}
		\item We say that the threshold $\mu$ is a regular point of the spectrum of $\calH$ provided that the operator $QTQ$ is invertible on the subspace $Q(L^2(\R) \times L^2(\R))$.
		\item Suppose $\mu$ is not a regular point. Let $S_1$ be the Riesz projection onto the kernel of $QTQ$, and we define $D_0 = (Q(T+S_1)Q)^{-1}$.  Note that $QD_0Q$ is an absolutely bounded operator on $L^2(\R)\times L^2(\R)$. The proof for this follows from Lemma~8 of \cite{05Schlag} with minor changes. See also \cite[Lemma 2.7]{15GoldbergGreen}. 
	\end{enumerate}
\end{definition}
Note that since we impose symmetry assumptions on the potential $\calV$, the thresholds $\mu$ and $-\mu$ are either both regular or irregular. The invertibility of $QTQ$ is related to the absence of distributional $L^\infty(\R) \times L^\infty(\R)$ solutions to $\calH\Psi = \mu \Psi$. The following lemma establishes the equivalent definitions. See \cite[Lemma 5.4]{00JensenNenciu} for the analogue in the scalar case.  
\begin{lemma} \label{lemma: threshold resonance characterization}
	Suppose assumptions~(A1) -- (A5) hold. Then the following holds.
	\begin{enumerate}
		\item  Let $\Phi \in S_1(L^2(\R) \times L^2(\R)) \setminus \{0\}$. If $\Psi = (\Psi_1,\Psi_2)^\top$ is defined by 
		\begin{equation}\label{eqn: Psi}
			\Psi(x) := -\calG_0[ v_1 \Phi](x) + c_0 \underline{e}_1,
		\end{equation}
		with 
		\begin{equation}\label{eqn: def c_0}
			c_0 = \frac{\langle(a,b)^\top, T\Phi\rangle}{\Vert V_1 \Vert_{L^1(\R)}},
		\end{equation}
		then
		\begin{equation}\label{eqn: Phi = v_2 Psi}
			\Phi = v_2 \Psi,
		\end{equation}
		and $\Psi\in L^\infty(\R) \times L^\infty(\R)$ is a distributional solution to 
		\begin{equation}\label{eqn: resonance eqn}
			\calH\Psi = \mu\Psi.
		\end{equation}
		Furthermore, if additionally assumption~(A6) holds, i.e.,
		\begin{equation}\label{eqn: def of c_2}
			c_{2,\pm} :=  \frac{1}{2\sqrt{2\mu}} \int_{\R} e^{\pm \sqrt{2\mu}y} \big(V_2(y) \Psi_1(y) + V_1(y)\Psi_2(y)\big) \, \ud y = 0,
		\end{equation}
		then 
		\begin{equation}\label{eqn: asymptotics of Psi_1}
			\lim_{x \to \pm \infty} \Psi_1(x) = c_0 \mp c_1,
		\end{equation}
	where 
		\begin{equation}\label{def: c_1}
		c_1 := \frac{1}{2} \langle x(a(x),b(x))^\top,\Phi(x)\rangle = \frac{1}{2}  \int_\R x \big( a(x)\Phi_1(x) + b(x) \Phi_2(x)\big) \,\ud x.
	\end{equation}
		In particular,
		\begin{equation}
		\Psi_1 \notin L^2(\R).
		\end{equation}		
		More precisely, the constants $c_0$ and $c_1$ cannot both be zero. 
		\item Conversely, suppose there exists $\Psi \in L^\infty(\R) \times L^\infty(\R)$ satisfying \eqref{eqn: resonance eqn} in the distributional sense. Then
		\begin{equation}
			\Phi = v_2 \Psi \in S_1 (L^2(\R) \times L^2(\R)).
		\end{equation}	
		\item Suppose assumptions~(A1) -- (A6) hold. Then, $\dim S_1(L^2(\R) \times L^2(\R)) \leq 1$. In the case $\dim S_1(L^2(\R) \times L^2(\R)) =1$, i.e., $S_1(L^2(\R)\times L^2(\R)) = \Span\{\Phi\}$ for some $\Phi = (\Phi_1,\Phi_2)^\top \in L^2(\R) \times L^2(\R) \setminus \{0\}$, we have the following identities
		\begin{align}
			S_1 T P T S_1 &= \vert c_0\vert ^2 \Vert \Phi \Vert_{L^2(\R)\times L^2(\R)}^{-2} \Vert V_1 \Vert_{L^1(\R)} S_1,			\label{eqn: value of STPTS}\\
			PTS_1TP &= \vert c_0\vert^2 \Vert \Phi \Vert_{L^2(\R)\times L^2(\R)}^{-2} \Vert V_1 \Vert_{L^1(\R)}P,\label{eqn: PTS_1TP}\\
			S_1M_1S_1 &= -2i \vert c_1 \vert^2\Vert \Phi \Vert_{L^2(\R)\times L^2(\R)}^{-2} S_1,\label{lemma: tr S_1M_1S_1}
		\end{align}
	where the constants $c_0$ and $c_1$ are given by \eqref{eqn: def c_0} and \eqref{def: c_1} respectively for this $\Phi$.
	\end{enumerate}
	
\end{lemma}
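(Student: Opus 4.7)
The plan is to extract the algebraic content of $\Phi \in S_1(L^2\times L^2) = \ker(QTQ)$ and combine it with the distributional ODE $\calH\Psi = \mu\Psi$, organized around the free threshold kernel $\calG_0$ from \lem{lemma: expansion of R_0}.

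For part~(1), two algebraic observations drive the argument: since $S_1 \leq Q$ we have $P\Phi = 0$, which is exactly the orthogonality $\langle(a,b)^\top,\Phi\rangle = 0$; and $QTQ\Phi = 0$ together with $P\Phi = 0$ forces $T\Phi = PT\Phi = c_0(a,b)^\top = c_0 v_2 \underline{e}_1$, with $c_0$ as in \eqref{eqn: def c_0}. Writing $T = I + v_2\calG_0 v_1$ and rearranging gives $\Phi = v_2\Psi$ for $\Psi = -\calG_0 v_1\Phi + c_0\underline{e}_1$. The upper component of $\calG_0 v_1\Phi$ involves the kernel $-|x-y|/2$ and a priori grows linearly in $|x|$, but substituting $\Phi = v_2\Psi$ identifies $[v_1\Phi]_1 = -(V_1\Psi_1+V_2\Psi_2)$, so the linear-growth coefficient $\int(V_1\Psi_1+V_2\Psi_2)\,\ud y = -\langle(a,b)^\top,\Phi\rangle$ vanishes and $\Psi_1 \in L^\infty$; the lower component is automatically bounded through the exponentially decaying kernel. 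Next, $\calH\Psi = \mu\Psi$ follows distributionally from $(\calH_0 - \mu)\calG_0 = I\delta$ (upper: $-\partial_x^2(-|x-y|/2) = \delta$; lower: $(\partial_x^2 - 2\mu)(-e^{-\sqrt{2\mu}|x-y|}/(2\sqrt{2\mu})) = \delta$) together with $\calV\Psi = v_1 v_2\Psi = v_1\Phi$. Expanding $|x-y| = \pm(x-y)$ up to exponentially small tails as $x \to \pm\infty$ and invoking the moment identity yields the asymptotics $\Psi_1(x) \to c_0 \mp c_1$. Assumption~(A6) enters only at the final claim that $c_0$ and $c_1$ cannot both vanish: if both were zero, then (A6) combined with a Volterra iteration on the scalar equation $(\partial_x^2 - 2\mu)\Psi_2 = -(V_2\Psi_1+V_1\Psi_2)$ (using (A3)) would upgrade $\Psi$ to an $L^2$ threshold eigenfunction of $\calH$, contradicting the fact that (A1)--(A2) exclude $\mu$ from the point spectrum via the standard factorization argument in terms of the scalar operators $L_\pm$.

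For part~(2), given a bounded distributional solution $\Psi$, set $\Phi := v_2\Psi$; assumption~(A3) gives $\Phi \in L^2\times L^2$. Rewriting $\calH\Psi = \mu\Psi$ as $(\calH_0-\mu)\Psi = -v_1\Phi$ and inverting against $\calG_0$ produces $\Psi = -\calG_0 v_1\Phi + c_0\underline{e}_1$ for a constant $c_0$ determined by the $L^\infty$ asymptotic of $\Psi_1$; multiplying by $v_2$ yields $T\Phi = c_0(a,b)^\top \in \Ran P$. Boundedness of $\Psi_1$ forces the linear-growth cancellation $\langle(a,b)^\top,\Phi\rangle = 0$, i.e., $P\Phi = 0$. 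These combine to give $QTQ\Phi = 0$, so $\Phi \in S_1(L^2\times L^2)$.

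For part~(3), the equivalence of (1) and (2) identifies $S_1(L^2\times L^2)$ with the space of bounded distributional solutions to $\calH\Psi = \mu\Psi$ satisfying (A6). Viewing the system as a four-dimensional first-order ODE, the bounded subspace has dimension at most two, and (A6) together with a Wronskian/Volterra argument at $\pm\infty$ (using (A3)) pares this down to one, so $\dim S_1 \leq 1$. The three operator identities then follow from the rank-one form $S_1 = \Vert\Phi\Vert^{-2}\,\Phi\otimes\Phi^*$ and the relation $T\Phi = c_0(a,b)^\top$. Indeed, $S_1TPTS_1 = \Vert\Phi\Vert^{-2}\langle T\Phi, PT\Phi\rangle S_1 = \Vert\Phi\Vert^{-2}|c_0|^2\Vert V_1\Vert_{L^1}\,S_1$, and a parallel computation using $P = \Vert V_1\Vert_{L^1}^{-1}(a,b)^\top\otimes(a,b)^\top$ yields $PTS_1TP = \Vert\Phi\Vert^{-2}|c_0|^2\Vert V_1\Vert_{L^1}\,P$. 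For $S_1M_1S_1$, the rank-one $M_1$ with kernel proportional to $|x-y|^2(a,b)^\top(a,b)$ is expanded through $|x-y|^2 = x^2 - 2xy + y^2$: the $x^2$ and $y^2$ pieces carry factors of $\langle(a,b)^\top,\Phi\rangle = 0$ and thus vanish, while the $-2xy$ piece produces $|c_1|^2$, yielding $S_1M_1S_1 = -2i|c_1|^2\Vert\Phi\Vert^{-2}S_1$.

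The main obstacle is the Volterra-plus-(A6) step in part~(1): ruling out the simultaneous vanishing of $c_0$ and $c_1$ requires a precise analysis of the $e^{\pm\sqrt{2\mu}x}$ tails of $\Psi_2$, and the condition in (A6) is exactly what guarantees the cancellation needed to promote the bounded solution to an $L^2$ eigenfunction and derive the contradiction with (A1)--(A2); the same mechanism underlies the $\dim S_1 \leq 1$ bound in part~(3).
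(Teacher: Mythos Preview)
Your treatment of the algebraic steps in parts (1), (2), and the three operator identities in part (3) is essentially the paper's argument. The real discrepancy is in how you close the final claim of part~(1) (and, by extension, the dimension bound in part~(3)).

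You argue: if $c_0=c_1=0$, then (A6) plus a Volterra iteration upgrades $\Psi$ to an $L^2$ threshold eigenfunction, which you then rule out by asserting that ``(A1)--(A2) exclude $\mu$ from the point spectrum via the standard factorization argument in terms of the scalar operators $L_\pm$.'' This last assertion is the gap. There is no such standard argument available here from (A1)--(A2) alone; indeed the author explicitly remarks (see the discussion following Theorem~\ref{theorem: local decay estimate}) that it is not clear whether threshold eigenfunctions can be excluded without (A6). In the $\calJ$-conjugated picture one is led to $L_-L_+\theta_1 = \mu^2\theta_1$, and the mere nonnegativity of $L_-$ does not force $\theta_1=0$; you would need additional input (e.g.\ positivity of $L_+$ on a suitable subspace, or a kernel condition) that (A1)--(A2) do not provide. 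So as written, your contradiction does not close.

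The paper's route avoids this entirely and is actually simpler: once $c_0=c_1=0$, the integral representations for $\Psi_1$ (from the moment cancellation) and for $\Psi_2$ (from (A6), which kills the growing exponential tail and lets one write $\Psi_2$ as an integral over $[x,\infty)$ with a bounded kernel) combine into a \emph{homogeneous} Volterra system $\Psi(x)=\int_x^\infty K(x,y)\Psi(y)\,\ud y$ on $[M,\infty)$, with $|K(x,y)|\lesssim e^{-\gamma|y|}\mathbbm{1}_{y>x}$ by (A3). A contraction on $L^\infty(M,\infty)$ for $M$ large gives $\Psi\equiv 0$ there, and ODE uniqueness propagates this to all of $\R$, whence $\Phi=v_2\Psi=0$, contradicting $\Phi\neq 0$. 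No spectral input about threshold eigenvalues is needed. The same mechanism proves $\dim S_1\le 1$ in part~(3): given two independent $\Phi,\widetilde\Phi$ with associated constants $(c_0,c_1)$ and $(d_0,d_1)$, choose $\theta$ so that $(c_0-c_1)+\theta(d_0-d_1)=0$ and run the identical Volterra contraction on $\Psi+\theta\widetilde\Psi$. Your ``four-dimensional ODE plus Wronskian'' sketch points in the right direction but does not isolate this homogeneous Volterra structure, which is the actual engine.
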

\begin{proof}[Proof of (1)] Let $\Phi = (\Phi_1,\Phi_2) \in S_1(L^2(\R) \times L^2(\R))$ with $\Phi \neq 0$. Since $S_1(L^2(\R) \times L^2(\R))$ is a subspace of $Q(L^2(\R) \times L^2(\R))$, we have $Q\Phi = \Phi$. Using the fact that $\Phi \in \ker(QTQ)$ and the definition of $T$ (c.f \eqref{eqn: operator T}), we obtain
	\begin{equation}\label{eqn: PTphi = Tphi}
		0 = QTQ\Phi = (I- P)T\Phi = (I+v_2 \calG_0 v_1)\Phi - PT\Phi.
	\end{equation}
	Since $(a,b)^\top = v_2 \underline{e}_1$ and $P$ is the orthogonal projection onto the span of $(a,b)^\top$, we have
	\begin{equation}\label{eqn: PTPhi}
		PT\Phi = \frac{\langle (a,b)^\top , T\Phi \rangle  }{\Vert V_1 \Vert_{L^1(\R)}}(a,b)^\top = c_0 v_2 \underline{e}_1,
	\end{equation}
	with $c_0$ defined in \eqref{eqn: def c_0}. It follows that
	\begin{equation*}
		\Phi = -v_2\calG_0v_1 \Phi + c_0 v_2 \underline{e}_1 = v_2(-\calG_0v_1 \Phi + c_0 \underline{e}_1) = v_2 \Psi. 
	\end{equation*}
	This proves \eqref{eqn: Phi = v_2 Psi}.  Next, we show \eqref{eqn: resonance eqn}. Denoting $\Phi = (\Phi_1,\Phi_2)^\top$ and using the definition of $\mathcal{G}_0$ (c.f. \eqref{eqn: G_0 def}), we have
	\begin{equation*}
		(\calH_0 - \mu I)\calG_0 (v_1\Phi) = v_1 \Phi ,
	\end{equation*}
	i.e.,
	\begin{equation*}
		\begin{split}
			\begin{cases}
				(-\partial_x^2)\displaystyle\int_\R \frac{- \vert x - y \vert}{2}\big(-a(y)\Phi_1(y) - b(y)\Phi_2(y)\big) \, \ud y = -a(x)\Phi_1(x) - b(x)\Phi_2(x), \\
				(\partial_x^2 - 2\mu) \displaystyle\int_\R \frac{-e^{-\sqrt{ 2\mu}\vert x - y \vert }}{2\sqrt{2\mu}}\big(b(y)\Phi_1(y) + a(y)\Phi_2(y)\big) \, \ud y = b(x)\Phi_1(x) + a(x)\Phi_2(x).
			\end{cases}
		\end{split}
	\end{equation*}
	This equation is well-defined, since $ v_1\Phi \in \langle x \rangle^{-1-} L^1(\R) \times \langle x \rangle^{-1-} L^1(\R)$. Using \eqref{eqn: Psi}, \eqref{eqn: Phi = v_2 Psi}, and $(H_0 - \mu I)(c_0 \underline{e}_1) = 0$, we have
	\begin{equation*}
		(\calH_0 - \mu I)\Psi = (H_0 - \mu I)[-\calG_0 (v_1 \Phi)+c_0 \underline{e}_1] = - v_1 \Phi = -v_1 v_2 \Psi = -\calV \Psi,
	\end{equation*}
	which implies \eqref{eqn: resonance eqn}. We now show that $\Psi = (\Psi_1,\Psi_2)^\top$ is in $L^\infty(\R) \times L^\infty(\R)$.  
	Noting that 
	\begin{equation*}
		\Psi_1(x) = c_0 + \frac{1}{2}\int_\R \vert x - y \vert \big(a(y)\Phi_1(y) + b(y) \Phi_2(y)\big)  \, \ud y,
	\end{equation*}
	by employing the orthogonality condition $\langle (a,b)^\top,\Phi \rangle = 0$, we have
	\begin{equation*}
		\Psi_1(x) = c_0 + \frac{1}{2}\int_\R (\vert x - y \vert - \vert x \vert) \big(a(y)\Phi_1(y) + b(y) \Phi_2(y)\big)   \, \ud y.
	\end{equation*}
	Using $\big\vert \vert x - y \vert - \vert x \vert\big\vert  \leq \vert y \vert$ and $ \vert a(y) \vert + \vert b(y) \vert \lesssim \langle y \rangle^{-2}$, we have the uniform bound 
	\begin{equation*}
		\sup_{x \in \R} \vert \Psi_1(x) \vert \leq \vert c_0 \vert + \frac{1}{2}\int \vert y \vert \left\vert a(y)\Phi_1(y) + b(y) \Phi_2(y)\right\vert  \, \ud y \lesssim \Vert \Phi \Vert_{L^2(\R)\times L^2(\R)} \lesssim 1.
	\end{equation*}
	Since $(a,b)^\top$ and $\Phi$ are in $L^2(\R) \times L^2(\R)$, we have the uniform bound on $\Psi_2$ by the Cauchy-Schwarz inequality
	\begin{equation*}
		\sup_{x \in \R} \vert \Psi_2(x) \vert \lesssim \int_\R\vert b(y)\Phi_1(y) + a(y)\Phi_2(y) \vert  \, \ud y \leq \Vert b \Vert_{L^2(\R)}\Vert \Phi_1 \Vert_{L^2(\R)} + \Vert a \Vert_{L^2(\R)}\Vert \Phi_2 \Vert_{L^2(\R)} \lesssim 1.
	\end{equation*}
	Thus, we have shown that  $\Psi =(\Psi_1,\Psi_2)^\top \in L^\infty(\R) \times L^\infty(\R)$. 	Finally, we now assume $c_{2,\pm} = 0 $ and show that $\Psi_1$ cannot be in $L^2(\R) \setminus \{0\}$ by a Volterra argument. Using  $\langle (a,b)^\top,\Phi \rangle = 0$,  for $x \geq 0$ large, we write
	\begin{equation}\label{eqn: volterra x>0}
		\begin{split}
			\Psi_1(x) &= c_0 - c_1 + \int_x^\infty (y-x) \big(a(y)\Phi_1(y) + b(y) \Phi_2(y)\big) \, \ud y. 
		\end{split}
	\end{equation}	
Using $c_{2,\pm} = 0$, we insert $-e^{-\sqrt{2\mu}x}c_{2,+} = 0$ to write 
\begin{equation}\label{eqn: volterra x>02}
	\begin{split}
	\Psi_2(x) &= \frac{1}{2\sqrt{2\mu}} \int_x^\infty \big(e^{-\sqrt{2\mu}(y-x)}-e^{-\sqrt{2\mu}(x-y)}\big) \big(V_2(y)\Psi_1(y) + V_1(y)\Psi_2(y)\big) \, \ud  y.
	\end{split}
\end{equation}
Similarly, for $x<0$, using $e^{\sqrt{2\mu x}}c_{2,-}=0$, we have 
\begin{align}
	&\Psi_1(x) = c_0 + c_1 +\int_{-\infty}^x (x-y)(V_1(y) \Psi_1(y) +V_2(y) \Psi_2(y))\, \ud y,\label{eqn: volterra x<0 phi1}\\
	&\Psi_2(x) = \frac{1}{2\sqrt{2\mu}}\int_{-\infty}^x \big(e^{-\sqrt{2\mu}(x-y)}-e^{-\sqrt{2\mu}(y-x)}\big) \big(V_2(y)\Psi_1(y) + V_1(y)\Psi_2(y)\big)  \, \ud y.\label{eqn: volterra x<0}
\end{align}
Suppose now that $c_0 = c_1 = 0$. Owing to the exponential decay of $V_1$, $V_2$ by assumption (A3), we obtain from \eqref{eqn: volterra x>0} and \eqref{eqn: volterra x>0} a homogeneous Volterra equation for $\Psi=(\Psi_1,\Psi_2)^\top$ satisfying
\begin{equation*}
	\Psi(x) = \int_{\R} K(x,y) \Psi(y) \, \ud y, \quad x \geq 0,
\end{equation*}
where $\vert K(x,y) \vert \lesssim e^{-\gamma \vert y \vert}\mathbbm{1}_{y > x}$ for some $0< \gamma < \beta$, which is a quasi-nilpotent operator. By performing a standard contraction on $L^\infty(M,\infty)$, with  $M>0$ sufficiently large, one arrives at a solution $\Psi(x) \equiv 0$ for all $x \geq M$. By the uniqueness theorem for ODEs, this implies that $\Psi \equiv 0$ on $\R$. Then, by the relation $\Phi = v_2 \Psi$ and the fact that $v_2$ is a positive matrix, one finds that $\Phi \equiv 0$, which contradicts the hypothesis $\Phi \neq 0$. Thus, the conclusion is that  $c_0$ and $c_1$ cannot be both zero. In particular, it follows from \eqref{eqn: volterra x>0} and \eqref{eqn: volterra x<0 phi1} that 
\begin{equation*}
	\lim_{x \rightarrow \pm \infty} \Psi_1(x) = c_0 \mp c_1.
\end{equation*}
Since either $c_0+c_1 \neq 0$ or $c_0 - c_1 \neq 0$, we conclude that $\Psi_1 \not \in L^2(\R)$.

	\textit{Proof of (2)}. Define $\Phi = v_2 \Psi$. Since $\Psi$ is a distributional solution to \eqref{eqn: resonance eqn}, using $\calV = v_1v_2$,  we have
	\begin{equation*}
		(\calH_0 - \mu I)\Psi = v_1 \Phi  \Longleftrightarrow \begin{cases}
			\Psi_1'' = a \Phi_1 + b\Phi_2, \\
			\Psi_2'' - 2\mu \Psi_2 = b \Phi_1 + a \Phi_2.
		\end{cases}
	\end{equation*}
	Let $\eta \in C_0^\infty(\R)$ be a non-negative function satisfying $\eta(x) = 1$ for $ \vert x \vert \leq1$ and $\eta(x) = 0$ for $\vert x \vert \geq 2$. Using the first equation from above and integrating by parts, we have for any $\eps>0$,
	\begin{equation*}
		\begin{split}
			&\left\vert \int_\R \big(a(y)\Phi_1(y) + b(y)\Phi_2(y)\big) \eta(\eps y) \,\ud y \right \vert = \left\vert \int_\R \Psi_1''(y) \eta(\eps y)\, \ud y \right \vert \\
			&= \left\vert \int_\R \Psi_1(y) \eps^2 \eta''(\eps y)\, \ud y \right \vert \leq \eps \Vert \Psi_1 \Vert_{L^\infty(\R)}  \int_\R  \left\vert\eta''(x) \right \vert \, \ud x.
		\end{split}
	\end{equation*}
	By taking the limit $\eps \to 0$ and using the Lebesgue dominated convergence theorem, we find that  $\langle (a,b)^\top, \Phi \rangle = 0$. Thus, $P\Phi = 0$, i.e. $\Phi \in Q(L^2(\R) \times L^2(\R))$. Following this fact and using $\Phi = v_2 \Psi$, we have
	\begin{equation}\label{eqn: QTQPhi}
		QTQ\Phi = QT\Phi = Q(I+v_2\calG_0v_1)\Phi = Qv_2\big(\Psi +\calG_0(\calV\Psi)\big).
	\end{equation}
	Now set $u := \Psi + \calG_0(\calV\Psi)$. Since $u = (u_1,u_2)^\top$ is a distributional solution of $(\calH_0 - \mu I)u = 0$, i.e.
	\begin{equation*}
		\begin{split}
			-u_1'' &= 0, \\
			u_2'' - 2\mu u_2 &= 0,
		\end{split}
	\end{equation*}
	we find that 
	\begin{equation*}
		\begin{split}
			&u_1(x) = \kappa_1 + \kappa_2x,\\
			&u_2(x) = \kappa_3e^{-\sqrt{2\mu}x} + \kappa_4e^{\sqrt{2\mu}x},
			\end{split}
	\end{equation*}
	for some $\kappa_i \in \C$, $i \in \{1,\ldots,4\}$. By similar arguments from Item~(1), we obtain that $\calG_0(\calV\Psi) \in L^\infty(\R) \times L^\infty(\R)$. Since $\Psi \in L^\infty(\R) \times L^\infty(\R)$, it follows that $ u \in L^\infty(\R) \times L^\infty(\R)$, which implies that $\kappa_2 = \kappa_3 = \kappa_4 = 0$. Thus, we have $u(x) \equiv (\kappa_1,0)^\top = \kappa_1\underline{e}_1$. Since $Qv_2 \underline{e}_1 = 0$, we conclude from \eqref{eqn: QTQPhi} using the definition of $u(x)$ that $QTQ\Phi = 0$, whence $\Phi \in S_1(L^2(\R) \times L^2(\R))$.
	
\textit{Proof of (3)}. Suppose there are two linearly independent $\Phi,\wtilPhi \in S_1(L^2(\R) \times L^2(\R))$. As in the proof of Item~(1), for $x \geq 0$, we have
	\begin{equation*}
		\begin{split}
			&\Psi_1(x) = c_0 - c_1 + \int_x^\infty (y-x) \big(V_1(y)\Psi_1(y) + V_2(y) \Psi_2(y)\big) \, \ud y,\\
			&\Psi_2(x) = \frac{1}{2\sqrt{2\mu}} \int_x^\infty \big(e^{-\sqrt{2\mu}(y-x)}-e^{-\sqrt{2\mu}(x-y)}\big) \big(V_2(y)\Psi_1(y) + V_1(y)\Psi_2(y)\big) \, \ud y,
		\end{split}
	\end{equation*}
	and
	\begin{equation*}
	\begin{split}
		&\widetilde{\Psi}_1(x) = d_0 - d_1 + \int_x^\infty (y-x) \big(V_1(y)\widetilde{\Psi}_1(y) + V_2(y) \widetilde{\Psi}_2(y)\big) \, \ud y,\\
		&\widetilde{\Psi}_2(x) = \frac{1}{2\sqrt{2\mu}} \int_x^\infty \big(e^{-\sqrt{2\mu}(y-x)}-e^{-\sqrt{2\mu}(x-y)}\big) \big(V_2(y)\widetilde{\Psi}_1(y) + V_1(y)\widetilde{\Psi}_2(y)\big) \, \ud y,
	\end{split}
\end{equation*}
where $d_0$ and $d_1$ are constants defined from $\wtilPhi$ which are analogous to $c_0$ and $c_1$. There is some constant $\theta \in \C$ such that 
\begin{equation*}
	c_0 - c_1 = -\theta (d_0 - d_1),
\end{equation*}
which imply the Volterra integral equation
\begin{equation*}
	\begin{bmatrix}
		\Psi_1 + \theta \widetilde{\Psi}_1 \\ \Psi_2 + \theta \widetilde{\Psi}_2
	\end{bmatrix}(x) = \int_{x}^\infty \begin{bmatrix}
	y - x& 0 \\ 0 & \frac{e^{-\sqrt{2\mu}(y-x)}-e^{-\sqrt{2\mu}(x-y)}}{2\sqrt{2\mu}}
\end{bmatrix} \calV(y) \begin{bmatrix}
\Psi_1(y) + \theta \widetilde{\Psi}_1(y) \\ \Psi_2(y) + \theta \widetilde{\Psi}_2(y)
\end{bmatrix}dy,
\end{equation*}
for any $x \geq 0$. By the same Volterra equation argument used in Item (1), we obtain  $\Psi+ \theta \widetilde{\Psi} \equiv 0$, which implies that $\Phi + \theta \widetilde{\Phi} \equiv 0$, but this contradicts that $\Phi$ and $\widetilde{\Phi}$ are linearly independent. Thus, we have shown that $\dim S_1(L^2(\R) \times L^2(\R)) \leq 1$. Next, we prove \eqref{eqn: value of STPTS}--\eqref{lemma: tr S_1M_1S_1}. Write $S_1 = \Vert \Phi \Vert_{L^2 \times L^2}^{-2}\langle \Phi,\cdot \rangle \Phi$. By \eqref{eqn: PTPhi} and the fact that $P$, $S_1$, and $T$ are self-adjoint, we compute for any $u \in L^2(\R) \times L^2(\R)$ that 
\begin{equation*}
	S_1 T P T S_1 u =  \Vert \Phi \Vert_{L^2 \times L^2}^{-2}\langle \Phi,u \rangle S_1 T P T \Phi =  \Vert \Phi \Vert_{L^2 \times L^2}^{-2}c_0\langle \Phi,u\rangle S_1T \begin{bmatrix}
			a \\ b 
		\end{bmatrix} = \vert c_0 \vert^2  \Vert \Phi \Vert_{L^2 \times L^2}^{-2} \Vert V_1 \Vert_{L^1(\R)}S_1 u.
\end{equation*}
A similar computation reveals
\begin{equation*}
	PTS_1TPu =  \vert c_0\vert^2  \Vert \Phi \Vert_{L^2 \times L^2}^{-2} \Vert V_1 \Vert_{L^1(\R)}Pu.
\end{equation*}
For the third identity \eqref{lemma: tr S_1M_1S_1}, in view of \eqref{eqn: v2e11v1} and \eqref{eqn: def of M_1}, we write
\begin{equation*}
	M_1(x,y) = v_2(x)G_1(x,y)v_1(y) = \frac{i\vert x - y \vert^2}{4}\begin{bmatrix}
		a(x) \\ b(x)
	\end{bmatrix}\begin{bmatrix}
	a(y) & b(y)
\end{bmatrix}.
\end{equation*}
By using the orthogonality  
\begin{equation*}
	\langle \Phi,(a,b)^\top\rangle = \int_\R \big(\Phi_1(x)a(x) + \Phi_2(x)b(x)\big) \, \ud x = 0,
\end{equation*}
and the identity
\begin{equation*}
	\vert x - y \vert^2 = x^2 + y^2 - 2xy,
\end{equation*}
we have
\begin{equation*}
	\begin{split}
		&[S_1M_1S_1](x,y) = \int_{\R^2} S_1(x,x_1)M_1(x_1,y_1)S_1(y_1,y) \,\ud x_1 \,\ud y_1\\
		&\quad = \frac{i}{4}\frac{\Phi(x)}{\Vert \Phi \Vert_{L^2 \times L^2}^2}\int_{\R^2}\left( \vert x_1 - y_1 \vert^2 \Phi^*(x_1)\begin{bmatrix}
			a(x_1) \\ b(x_1)
		\end{bmatrix}\begin{bmatrix}
			a(y_1) & b(y_1)
		\end{bmatrix} \Phi(y_1)\right) \, \ud x_1 \ud y_1 \frac{\Phi^*(y)}{\Vert \Phi\Vert_{L^2 \times L^2}^2}\\
	&\quad = -2i \left(\int_{\R}  \frac{x_1}{2} \Phi^*(x_1)\begin{bmatrix}
		a(x_1) \\ b(x_1)
	\end{bmatrix}\,\ud x_1\right) \left(\int_{\R} \tfrac{y_1}{2} \begin{bmatrix}
		a(y_1) & b(y_1)
	\end{bmatrix} \Phi(y_1) \, \ud y_1\right) \Vert \Phi \Vert_{L^2 \times L^2}^{-2}S_1(x,y)\\
	&\quad = -2i \vert c_1 \vert^2 \Vert \Phi \Vert_{L^2 \times L^2}^{-2} S_1(x,y).
	\end{split}
\end{equation*}
This proves \eqref{lemma: tr S_1M_1S_1} and we are done. 
\end{proof}
\begin{remark}\label{remark: psi_2}
	By direct computation, the conjugation identity $\sigma_3 \calH = \calH^* \sigma_3$ and the identity $v_1 = -\sigma_3 v_2$ imply that the vector $\widetilde{\Psi} := \sigma_3 \Psi$ solves
	\begin{equation}
		\calH^* \widetilde{\Psi} = \mu \widetilde{\Psi},
	\end{equation}
	where $\Psi$ is the distribution solution to \eqref{eqn: resonance eqn}. Moreover, one has the identities
	\begin{equation}\label{eqn: sigma_3Psi}
		\sigma_3 \Psi = \calG_0 (v_2 \Phi) + (c_0,0)^\top, \quad \Phi = v_2 \Psi = -v_1^\top \widetilde{\Psi}
	\end{equation}
	Similarly, using the conjugation identity $\sigma_1 \calH = - \calH\sigma_1$, we note that the vector $\Psi_- = \sigma_1 \Psi $ solves the system 
	\begin{equation}
		\calH\Psi_- = -\mu \Psi_-.
	\end{equation}
\end{remark}
Following the preceding discussion, we assume the threshold $\mu$ is irregular and we derive an expansion for the inverse operator $M(z)^{-1}$ on a small punctured disk near the origin. We employ the inversion lemma due to Jensen and Nenciu \cite[Lemma~2.1]{00JensenNenciu}. 
\begin{lemma}\label{lemma: JN}
	Let $H$ be a Hilbert space, let $A$ be a closed operator and $S$ a projection. Suppose $A+S$ has a bounded inverse. Then $A$ has a bounded inverse if and only if 
	\begin{equation*}
		B = S - S(A+S)^{-1}S
	\end{equation*}
	has a bounded inverse in $SH$, and in this case,
	\begin{equation*}
		A^{-1} = (A+S)^{-1} + (A+S)^{-1}SB^{-1}S(A+S)^{-1}, \quad \text{on $H$}.
	\end{equation*}
\end{lemma}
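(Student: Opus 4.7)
The plan is to verify both implications by direct algebraic manipulation with the projection $S$, in the style of the Feshbach--Schur complement formula. Writing $D := (A+S)^{-1}$ for brevity, the single identity that drives everything is $A = (A+S) - S$, which upon right- and left-multiplication by $D$ yields
\begin{equation*}
AD = I - SD, \qquad DA = I - DS.
\end{equation*}
Both directions are purely algebraic consequences of these two relations; the hypothesis that $A+S$ has a bounded inverse enters only to make sense of $D$, and closedness of $A$ is used only to conclude boundedness of $A^{-1}$ via the closed graph theorem once invertibility has been established.

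For the sufficient direction, I would guess the Ansatz
\begin{equation*}
E := D + DSB^{-1}SD
\end{equation*}
and verify $AE = I$ by substituting $AD = I - SD$: the expansion produces a cross term $(I - SD)SB^{-1}SD$ which, after factoring, equals $(S - SDS)B^{-1}SD = BB^{-1}SD = SD$, cancelling the other $SD$ term and leaving $I$. The identity $EA = I$ follows symmetrically from $DA = I - DS$. Since $B^{-1}$ is assumed to exist as a bounded operator on $SH$, the sandwich $SB^{-1}S$ is bounded on $H$, so $E$ is a bounded two-sided inverse of $A$.

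For the necessary direction, I would postulate
\begin{equation*}
F := S + SA^{-1}S
\end{equation*}
(regarded as an operator on $SH$) as the inverse of $B$. The verification needs one preparatory identity, namely $A^{-1} = D + DSA^{-1}$, or equivalently $SA^{-1}S = SDS + SDSA^{-1}S$, obtained by applying $D$ on the left of $(A+S)A^{-1} = I + SA^{-1}$. Expanding $BF = (S - SDS)(S + SA^{-1}S)$ and collapsing with $S^{2} = S$ on $SH$ reduces precisely to this identity, giving $BF = S$; the reverse product $FB = S$ on $SH$ is analogous.

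The only genuine content of the proof is \emph{guessing} the correct inverse formulas $E$ and $F$; once they are written down, everything reduces to bookkeeping with the two commutation relations $AD = I - SD$ and $DA = I - DS$. In the applications that follow, $A$ will be $M(z)$ and $S$ will be a projection onto (or associated with) $\ker(QTQ)$, and the formula $A^{-1} = D + DSB^{-1}SD$ is precisely the mechanism used to isolate the singular part of $M(z)^{-1}$ as $z \to 0$ inside the finite-rank sandwich $SB^{-1}S$, while the complementary piece $D$ remains regular.
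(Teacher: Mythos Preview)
The paper does not supply its own proof of this lemma; it is simply quoted from Jensen--Nenciu \cite[Lemma~2.1]{00JensenNenciu}. Your argument is correct and is essentially the standard Feshbach--Schur verification that one finds in that reference: guess the two candidate inverses $E = D + DSB^{-1}SD$ and $F = S + SA^{-1}S$, then check the products collapse using $AD = I - SD$, $DA = I - DS$, and the resolvent-type identity $A^{-1} = D + DSA^{-1} = D + A^{-1}SD$.

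One small remark: your aside that closedness of $A$ is needed ``to conclude boundedness of $A^{-1}$ via the closed graph theorem'' is slightly off. In the sufficient direction you have already written down $E$ as a manifestly bounded operator, so no closed graph argument is required. Closedness of $A$ (together with boundedness of $S$) is instead what guarantees that $D = (A+S)^{-1}$ has range equal to the domain of $A$, so that the product $AE$ is defined on all of $H$ and the identities $AE = I$, $EA = I_{\mathrm{dom}(A)}$ make sense. This is a cosmetic point and does not affect the validity of your proof.
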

We will now state the inverse operator of $M(z)$ away from $z=0$. 
\begin{proposition} \label{prop: invert M} Suppose assumptions~(A1) -- (A6) hold. Let $S_1(L^2(\R) \times L^2(\R)) = \Span(\{\Phi\})$ for some $\Phi = (\Phi_1,\Phi_2)^\top \neq \vec0$. Let $\kappa := (2i)^{-1}\Vert V_1 \Vert_{L^1(\R)}$, and let $d$ be the constant defined by 
	\begin{equation}
		d := -2i(\vert c_0 \vert^2 + \vert c_1 \vert^2) \Vert \Phi \Vert_{L^2 \times L^2}^{-2} \neq 0,
	\end{equation}	
	with $c_0$ and $c_1$ defined by \eqref{eqn: def c_0} and \eqref{def: c_1} respectively for this $\Phi$. Then, there exists a positive radius $z_0>0$ such that for all $0 < \vert z\vert < z_0$, $M(z)$ is invertible on $L^2(\R) \times L^2(\R)$ and
	\begin{equation}\label{eqn: M inverse}
		\begin{split}
		M(z)^{-1} &= \frac{1}{d}\left(\frac{1}{z}S_1 - \frac{1}{\kappa} PTS_1 - \frac{1}{\kappa} S_1TP\right) +\left( \frac{1}{\kappa} + \frac{\vert c_0 \vert^2 \Vert \Phi \Vert_{L^2 \times L^2}^{-2} \Vert V_1 \Vert_{L^1(\R)}  }{d\kappa^2} \right)zP \\
		&\qquad + Q \Lambda_0(z) Q + zQ\Lambda_1(z) + z\Lambda_2(z)Q + z^2\Lambda_3(z),
		\end{split}
	\end{equation}
where $\Lambda_j(z)$ are absolutely bounded operators on $L^2(\R) \times L^2(\R)$  satisfying the improved bounds
\begin{equation}\label{eqn: condition on Lambda_j}
	\Vert \vert \partial_z^k \Lambda_j(z) \vert \Vert_{L^2(\R) \times L^2(\R) \to L^2(\R) \times L^2(\R)} \lesssim 1, \quad k=0,1,2,\quad j=0,1,2,3,
\end{equation} 
uniformly in $z$ for $\vert z\vert < z_0$.
\end{proposition}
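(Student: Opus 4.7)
The plan is to invert $M(z)$ by a double application of the Jensen-Nenciu inversion lemma (Lemma~\ref{lemma: JN}), following the framework of \cite{06ErdoganSchlag}. Since the singular term $g(z)P = \kappa z^{-1}P$ obstructs a direct inversion, I first multiply by $z$ and work with
\begin{equation*}
\mathfrak{M}(z) := z M(z) = \kappa P + zT + z^2 M_1 + z\calM_2(z),
\end{equation*}
so that $M(z)^{-1} = z\,\mathfrak{M}(z)^{-1}$ once $\mathfrak{M}(z)$ is invertible. Applying Jensen-Nenciu to $\mathfrak{M}(z)$ with the projection $S = Q$, the operator $\mathfrak{M}(z) + Q$ is an absolutely bounded $O(z)$ perturbation of $\kappa P + Q$, which has the explicit inverse $\kappa^{-1}P + Q$. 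A Neumann series thus yields $(\mathfrak{M}(z)+Q)^{-1} = (\kappa^{-1}P + Q) - z(\kappa^{-1}P + Q)T(\kappa^{-1}P + Q) + O(z^2)$ for $|z|$ small enough, with absolutely bounded coefficients.

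The associated auxiliary operator $B(z) := Q - Q(\mathfrak{M}(z)+Q)^{-1}Q$ on $QH$ takes the form
\begin{equation*}
B(z) = zQTQ + z^2 E_1 + O(z^3), \qquad E_1 := QM_1 Q - \kappa^{-1}QTPTQ - QTQTQ,
\end{equation*}
after using $PQ = QP = 0$ and $z\calM_2(z) = O(z^3)$. Writing $B(z) = zK(z)$ with $K(z) = QTQ + zE_1 + O(z^2)$, I apply Jensen-Nenciu a second time, now to $K(z)$ on $QH$ with projection $S = S_1$. Since $QTQ + S_1 = Q(T+S_1)Q$ has the absolutely bounded inverse $D_0$ (Definition~\ref{def: regular points}), the operator $K(z) + S_1$ is invertible and $(K(z)+S_1)^{-1} = D_0 - zD_0 E_1 D_0 + O(z^2)$.

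The crucial algebraic step is the evaluation of the second auxiliary operator $B_2(z) := S_1 - S_1(K(z)+S_1)^{-1}S_1$. A direct check gives $D_0 S_1 = S_1 D_0 = S_1$, because $Q(T+S_1)Q \cdot S_1 = QTQ S_1 + S_1 = S_1$, so the constant piece of $B_2(z)$ cancels and $B_2(z) = z\,S_1 E_1 S_1 + O(z^2)$. The central identity of the proof is $S_1 E_1 S_1 = d\, S_1$. Indeed, Lemma~\ref{lemma: threshold resonance characterization} supplies $S_1 M_1 S_1 = -2i|c_1|^2 \Vert \Phi \Vert_{L^2\times L^2}^{-2} S_1$ and $S_1 TPT S_1 = |c_0|^2 \Vert \Phi \Vert_{L^2\times L^2}^{-2} \Vert V_1 \Vert_{L^1} S_1$, while $S_1 TQT S_1 = 0$ follows from $QT S_1 = QTQ S_1 + QTP S_1 = 0$ (using $PS_1 = 0$, since $S_1 \leq Q$). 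Combining these with $\kappa^{-1}\Vert V_1 \Vert_{L^1} = 2i$ yields $S_1 E_1 S_1 = -2i(|c_0|^2 + |c_1|^2)\Vert \Phi \Vert_{L^2\times L^2}^{-2} S_1 = d S_1$, and Lemma~\ref{lemma: threshold resonance characterization}(1) guarantees $d \neq 0$.

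Hence $B_2(z)^{-1} = (zd)^{-1}S_1 + O(1)$ on $S_1 H$. Feeding this back through the second Jensen-Nenciu formula gives $K(z)^{-1}$, and then $B(z)^{-1} = z^{-1}K(z)^{-1} = (z^2 d)^{-1}S_1 + O(z^{-1})$. A final application of Jensen-Nenciu produces $\mathfrak{M}(z)^{-1}$; to extract the explicit leading terms, I expand $(\mathfrak{M}(z)+Q)^{-1}S_1 = S_1 - \kappa^{-1}z\, PTS_1 + O(z^2)$ and its transpose (using $QTS_1 = 0$), contract them with the $(z^2 d)^{-1}S_1$ singularity of $B(z)^{-1}$, and invoke $PTS_1 TP = |c_0|^2 \Vert \Phi \Vert_{L^2\times L^2}^{-2} \Vert V_1 \Vert_{L^1} P$ from Lemma~\ref{lemma: threshold resonance characterization}. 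Multiplying the result by $z$ produces the singular $(dz)^{-1}S_1$ term, the $-(d\kappa)^{-1}(PTS_1 + S_1 TP)$ terms, and the stated $zP$ coefficient $\kappa^{-1} + d^{-1}\kappa^{-2}|c_0|^2 \Vert \Phi \Vert_{L^2\times L^2}^{-2} \Vert V_1 \Vert_{L^1}$. All remaining contributions are absorbed into the error operators $\Lambda_j(z)$; their absolute boundedness follows because $D_0$ is absolutely bounded, $M_1$ and $\calM_2(z)$ are Hilbert-Schmidt, and $P, S_1$ are rank one, while the derivative bounds for $k = 0, 1, 2$ propagate through the Neumann series from the corresponding estimates on $\calM_2(z)$ in Proposition~\ref{prop: expansion of M}. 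The main obstacle is the bookkeeping: tracking which compositions among $\{P, Q, S_1, T, M_1, \calM_2(z)\}$ carry the $z^{-1}$ and $z^0$ coefficients of the explicit pole structure while the rest remains absolutely bounded. Once the identity $S_1 E_1 S_1 = d S_1$ is established, the remaining work is essentially a careful organization of Neumann series terms.
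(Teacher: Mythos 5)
Your proof is correct and takes essentially the same route as the paper: a double application of the Jensen--Nenciu lemma (first with projection $Q$, then with $S_1$), using the identities $S_1 M_1 S_1 = -2i|c_1|^2\|\Phi\|^{-2}S_1$, $S_1 TPT S_1 = |c_0|^2\|\Phi\|^{-2}\|V_1\|_{L^1}S_1$, $PTS_1TP = |c_0|^2\|\Phi\|^{-2}\|V_1\|_{L^1}P$, and the orthogonality $QTS_1 = S_1 TQ = 0$ to pin down the coefficient $d$ and the explicit leading finite-rank terms. The only superficial difference is the normalization ($\mathfrak{M}(z) = zM(z)$ around the invertible base $\kappa P + Q$, versus the paper's $\widetilde{M}(z) = \tfrac{z}{\kappa}M(z)$ around the identity), which shifts the $QTQTQ$ coefficient in your $E_1$ relative to the paper's $QM_1Q - \kappa^{-1}QT^2Q$; as you correctly observe, that term is annihilated by the $S_1$ sandwich, so the computation of $d$ and the final expansion are unaffected.
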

\begin{proof} Throughout the proof, we will denote by $\calE_j(z)$, for $0 \leq j \leq 3$, as error terms that satisfy the absolute bound
	\begin{equation*}
		\vert z \vert^k \left\Vert \vert \partial_z^k \calE_j(z) \vert   \right\Vert_{L^2(\R) \times L^2(\R) \rightarrow  L^2(\R) \times L^2(\R)} \lesssim \vert z \vert^j, \quad\forall\,  k = 0,1,2, \quad\forall\,  \vert z \vert < z_0,
	\end{equation*}
	for some $z_0>0$ small. This convenient notation will be useful in invoking Neumann series inversion for small values of $z$. Since we only need the expansion of $M(z)^{-1}$ up to a few powers of $z$, the exact expressions of $\calE_j(z)$ are insignificant and we allow it to vary from line to line. By Proposition~\ref{prop: expansion of M}, we rewrite $M(z)$ by setting 
	\begin{equation}\label{eqn: def wtilM}
		\wtilM(z) := \frac{z}{\kappa}M(z) = P + \frac{z}{\kappa}\big (T + zM_1 + \calM_2(z)\big),
	\end{equation}
	where $\calM_2(z)$ is the error term in Proposition~\ref{prop: expansion of M}. Using $I = P + Q$, we write
	\begin{equation}
		\wtilM(z) + Q = I + \frac{z}{\kappa}\big (T + zM_1 + \calM_2(z)\big),
	\end{equation}
	and by choosing $z$ small enough, a Neumann series expansion yields the inverse operator 
	\begin{equation}\label{eqn: Neumann1}
		[\wtilM(z)+Q]^{-1} = \sum_{n \geq 0} (-1)^n \left(\frac{z}{\kappa}\big(T + zM_1 +  \calM_2(z)\big)\right)^n \quad \text{on $L^2(\R)\times L^2(\R)$}.
	\end{equation}
	We collect the terms of power order up to $2$ to obtain
	\begin{equation}\label{eqn: def wtilM+Q}
		[\wtilM(z)+Q]^{-1} = I - \frac{z}{\kappa}T - z^2\left ( \frac{1}{\kappa}M_1 -\frac{1}{\kappa^2}T^2\right ) + \calE_3(z).
	\end{equation}
	Note that $z\calM_2(z)$ is of the form $\calE_3(z)$. Recall by Lemma~\ref{lemma: JN} that the operator  $\wtilM(z)$ is invertible on $L^2(\R) \times L^2(\R)$ if and only if the operator 
	\begin{equation}
		B_1(z) := Q-Q\big[\wtilM(z)+Q\big]^{-1}Q
	\end{equation}
	is invertible on the subspace $QL^2 \equiv Q(L^2(\R) \times L^2(\R))$. Using \eqref{eqn: def wtilM+Q}, we find that
	\begin{equation*}
		B_1(z) = \frac{z}{\kappa}QTQ + z^2\left(\frac{1}{\kappa}QM_1Q - \frac{1}{\kappa^2}QT^2Q\right) + Q\calE_3(z)Q.
	\end{equation*}
	We rewrite $B_1(z)$ by setting
	\begin{equation}
		\wtilB_1(z) := \frac{\kappa}{z}B_1(z) = QTQ + z\left(QM_1Q - \frac{1}{\kappa}QT^2Q\right) + Q\calE_2(z)Q.
	\end{equation}
	Since the threshold $\mu$ is not regular, the operator $QTQ$ is not invertible on $QL^2$ according to Definition~\ref{def: regular points}. By considering the operator
	\begin{equation*}
		\wtilB_1(z) + S_1 = (QTQ + S_1) +  z\left(QM_1Q - \frac{1}{\kappa}QT^2Q\right) + Q\calE_2(z)Q,
	\end{equation*}
	and the fact that we have $QD_0Q = D_0 = (QTQ+S_1)^{-1}$ on $QL^2$, we can pick $z$ small enough such that 
	\begin{equation*}
		\left\Vert z\left(QM_1Q - \frac{1}{\kappa}QT^2Q\right) + Q\calE_2(z)Q \right \Vert_{L^2 \times L^2 \to L^2 \times L^2} < \Vert QD_0Q \Vert_{L^2 \times L^2 \to L^2 \times L^2}^{-1}.
	\end{equation*}
	This allows for the more complicated Neumann series expansion (c.f. Lemma~\ref{lemma: Neumann series}) on $QL^2$:
	\begin{equation}\label{eqn: Neumann2}
		(\wtilB_1(z) + S_1)^{-1} = D_0\sum_{n\geq0}  (-1)^n\Big( \big(z (QM_1Q - \kappa^{-1} QT^2Q ) + Q\calE_2(z)Q\big)D_0\Big)^n \quad \text{on $QL^2$}.
	\end{equation}
	We collect the leading order terms in this expansion and write 
	\begin{equation}\label{eqn: tilB_1 + S_1 inv}
		\begin{split}
			(\wtilB_1(z) + S_1)^{-1} = D_0 - zD_0\left(QM_1Q - \kappa^{-1}QT^2Q\right)D_0 + Q\calE_2(z)Q.
		\end{split}
	\end{equation}
	At this step, it is crucial that the operator $D_0$ is absolutely bounded to ensure that the remainder term $Q\calE_2(z)Q$ and its derivatives are absolutely bounded.	Next, we set 
	\begin{equation}
		B_2(z) := S_1 - S_1(\wtilB_1(z) + S_1)^{-1}S_1, \quad \text{on $S_1L^2 \equiv S_1(L^2(\R)\times L^2(\R))$}.
	\end{equation}
	Using the orthogonality conditions
	\begin{equation}\label{eqn: various conditions}
		\begin{split}
			&S_1D_0 = D_0 S_1 = S_1,\\
			&S_1Q= QS_1 = S_1,\\
			& QTS_1 = S_1TQ = 0,
		\end{split}
	\end{equation}
	we obtain
	\begin{equation*}
		B_2(z) = z S_1(M_1 - \kappa^{-1} T^2)S_1 + S_1\calE_2(z)S_1.
	\end{equation*}
	By Lemma~\ref{lemma: threshold resonance characterization}, we note that $S_1L^2$ is spanned by $\Phi(x)$ and that $PT\Phi = T\Phi$ holds (c.f. \eqref{eqn: PTphi = Tphi}),  whence $S_1T^2S_1 = S_1TPTS_1$. Using Lemma~\ref{lemma: threshold resonance characterization} (c.f. \eqref{eqn: value of STPTS}, \eqref{lemma: tr S_1M_1S_1}), we obtain that
	\begin{equation*}
		d := \Tr (S_1(M_1 - \kappa^{-1}T^2)S_1) = \Tr(S_1M_1S_1)-\kappa^{-1}\Tr(S_1TPTS_1) =-2i(\vert c_0 \vert^2 + \vert c_1 \vert^2)\Vert \Phi \Vert_{L^2 \times L^2}^{-2} \neq 0.
	\end{equation*}
	Hence, we apply another Neumann series expansion to invert the operator $B_2(z)$ on $S_1L^2$ for small $z$ and write
	\begin{equation}\label{eqn: B_2inv}
		B_2(z)^{-1} = \frac{1}{dz}S_1 + S_1\calE_0(z)S_1 \quad \text{on $S_1L^2$}.
	\end{equation}
	Moreover, by Lemma~\ref{lemma: JN}, we have 
	\begin{equation*}
		\wtilB_1(z)^{-1} = \big(\wtilB_1(z)+S_1 \big)^{-1} + \big(\wtilB_1(z)+S_1\big)^{-1}S_1B_2(z)^{-1}S_1\big(\wtilB_1(z)+S_1\big)^{-1} \quad \text{on $QL^2$}.
	\end{equation*}
	Using \eqref{eqn: tilB_1 + S_1 inv}, \eqref{eqn: various conditions}, and \eqref{eqn: B_2inv}, we find that 
	\begin{equation*}
		\begin{split}
			\wtilB_1(z)^{-1} &= \frac{1}{dz}S_1 + Q\calE_0(z)Q\quad \text{on $QL^2$}.
		\end{split}
	\end{equation*}
	Hence, 
	\begin{equation*}
		B_1(z)^{-1} = \frac{\kappa}{z}\wtilB_1(z)^{-1} = \frac{\kappa}{d z^2}S_1 + \frac{\kappa}{z}Q\calE_0(z)Q \quad \text{on $QL^2$}.		
	\end{equation*}
We return to the expansion of $\wtilM(z)^{-1}$ by using Lemma~\ref{lemma: JN} with \eqref{eqn: def wtilM+Q} to obtain that
	\begin{equation*}
		\begin{split}
			\wtilM(z)^{-1} &= \big(\wtilM(z)+Q\big)^{-1} +  \big(\wtilM(z)+Q\big)^{-1}QB_1(z)^{-1}Q\big(\wtilM(z)+Q\big)^{-1}\\
			&= \big(I - \frac{z}{\kappa}T\big) + \frac{\kappa}{d z^2}S_1 -\frac{1}{dz}TS_1 - \frac{1}{dz}S_1T + \frac{1}{d \kappa}TS_1T \\
			&\quad + \frac{\kappa}{z}\left(Q\calE_0(z)Q + \calE_1(z)Q + Q\calE_1(z) + \calE_2(z)\right).
		\end{split}
	\end{equation*}
	Here, we used the identity $Q = IQ = QI$. By reverting back to $M(z) = \frac{\kappa}{z}\wtilM(z)$, we have  
	\begin{equation*}
		\begin{split}
		M(z)^{-1} = \frac{z}{\kappa}\wtilM(z)^{-1} &= \frac{z}{\kappa}I + \frac{1}{d z}S_1 - \frac{1}{d\kappa}TS_1 - \frac{1}{d\kappa}S_1T + \frac{z}{d\kappa^2}TS_1T \\
&\qquad + Q\calE_0(z)Q + \calE_1(z)Q + Q\calE_1(z) + \calE_2(z).			
		\end{split}
	\end{equation*}
	Note that we absorb the $\frac{z^2}{\kappa^2}T$ term into the error $\calE_2(z)$ above.
	By using the identities $I = Q + P$, $QTS_1 = S_1TQ = 0$, and by factoring the powers of $z$ from the error terms $\calE_j(z)$, we obtain the expansion of $M(z)^{-1}$ on $L^2$: for $0 < \vert z \vert < z_0$,
	\begin{equation*}
\begin{split}
	M(z)^{-1} &= \frac{z}{\kappa}P + \frac{1}{d}\left(\frac{1}{z}S_1 - \frac{1}{\kappa} PTS_1 - \frac{1}{\kappa} S_1TP + \frac{1}{\kappa^2}PTS_1TP \right)\\
	&\qquad  + Q \Lambda_0(z) Q + zQ\Lambda_1(z) + z\Lambda_2(z)Q + z^2\Lambda_3(z),
\end{split}
	\end{equation*}
	where the operators $\Lambda_j(z)$, $j=0,\ldots,3$,  satisfy \eqref{eqn: condition on Lambda_j}. Here, we choose $z_0>0$ sufficiently small such that the expansion $\eqref{eqn: def wtilM}$ and the Neumann series inversions \eqref{eqn: Neumann1}, \eqref{eqn: Neumann2}, \eqref{eqn: B_2inv} are valid for all $0<\vert z \vert < z_0$. Finally, by Lemma~\ref{lemma: threshold resonance characterization} (c.f. \eqref{eqn: PTS_1TP}), the term $PTS_1TP$ can be simplified to $\vert c_0 \vert^2 \Vert \Phi \Vert_{L^2 \times L^2}^{-2} \Vert V_1 \Vert_{L^1(\R)}P$, which finishes the proof.
\end{proof}
\begin{remark}\label{remark: M inverse}
We appeal to the reader that each leading term in the expansion \eqref{eqn: M inverse} plays an important role in revealing the cancellations among the finite rank operators that arise in the local decay estimate \eqref{eqn: theorem local decay estimate}. Such a precise expression was also obtained for the one-dimensional Dirac operators in \cite{22ErdoganGreen}, even though the proof we give here is different. See Remark~3.7 in that paper. For the low-energy unweighted dispersive estimates, it is sufficient to work with the simpler expression 
\begin{equation}\label{eqn: M inverse weak}
	M(z)^{-1} = \frac{1}{z}Q\widetilde{\Lambda}_0(z)Q + Q\widetilde{\Lambda}_1(z) + \widetilde{\Lambda}_2(z)Q + z\widetilde{\Lambda}_3(z),
\end{equation}
where we absorb the operators $S_1,S_1TP,PTS_1,P$ in \eqref{eqn: M inverse} into the operators $Q\widetilde{\Lambda}_0(z)Q$, $Q\widetilde{\Lambda}_1(z)$, $ \widetilde{\Lambda}_2(z)Q$, $\widetilde{\Lambda}_3(z)$ respectively. The operators $\widetilde{\Lambda}_j(z)$, for $j=0,\ldots,3$, satisfy the same estimates as \eqref{eqn: condition on Lambda_j}. 
\end{remark}

\section{Low energy estimates}
In this section, we prove the low energy bounds for the perturbed evolution, following the ideas in Section~4 of \cite{22ErdoganGreen}. We will frequently exploit the crucial orthogonality condition
\begin{equation}\label{eqn: orthogonality condition}
	\int_\R \underline{e}_{11}v_1(x) Q(x,y) \, \ud x = \int_\R Q(x,y) v_2(y) \underline{e}_{11} \, \ud y= \mathbf{0}_{2\times 2}.
\end{equation}
The following calculus lemma will be helpful for dealing with the lower entry of the free resolvent kernel.
\begin{lemma}\label{lemma: bound on 2,2 resolvent}
	For any $m>0$ and $r \geq 0$, we define
	\begin{equation}
		g_m(x) := \frac{e^{-r\sqrt{x^2+m^2}}}{\sqrt{x^2+m^2}}.
	\end{equation}
	Then, there exists $C_m > 0$ (independent of $r$) such that
	\begin{equation}\label{eqn: bound on g_m}
		\Vert \partial_x^k \, g_m \Vert_{L^\infty(\R)}\leq C_m \lesssim 1, \quad \forall\ k=0,1,2.
	\end{equation}
\end{lemma}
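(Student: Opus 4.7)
The plan is to bound the derivatives by a direct elementary computation, using the fact that $\phi(x) := \sqrt{x^2+m^2}$ is smooth and bounded below by $m > 0$, together with the universal estimate that $e^{-t}P(t)$ is bounded on $[0,\infty)$ for any polynomial $P$.

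First I would record the basic estimates on $\phi$: one has $\phi(x) \geq m$, $\phi'(x) = x/\phi(x)$ with $|\phi'(x)| \leq 1$, and $\phi''(x) = m^2/\phi(x)^3$ with $|\phi''(x)| \leq 1/m$. Then writing $g_m(x) = h(\phi(x))$ with $h(\phi) = e^{-r\phi}/\phi$, a short computation gives
\begin{equation*}
h'(\phi) = -e^{-r\phi}\,\frac{r\phi + 1}{\phi^2},\qquad h''(\phi) = e^{-r\phi}\left(\frac{r^2}{\phi} + \frac{2r}{\phi^2} + \frac{2}{\phi^3}\right).
\end{equation*}

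Next, substituting $t = r\phi \geq 0$, one has $e^{-r\phi}(r\phi + 1) = e^{-t}(t+1) \leq 1$ and $e^{-r\phi}(r^2\phi^2 + 2r\phi + 2) = e^{-t}(t^2 + 2t + 2) \leq 2$ (both maxima are attained at $t=0$, as one checks by differentiating). Since $\phi \geq m$, these yield the uniform-in-$r$ bounds $|h(\phi(x))| \leq 1/m$, $|h'(\phi(x))| \leq 1/m^2$, and $|h''(\phi(x))| \leq 2/m^3$.

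Finally I would apply the chain rule, $g_m'(x) = h'(\phi)\phi'(x)$ and $g_m''(x) = h''(\phi)(\phi'(x))^2 + h'(\phi)\phi''(x)$, and combine the previous estimates with $|\phi'| \leq 1$ and $|\phi''| \leq 1/m$ to conclude
\begin{equation*}
\|g_m\|_{L^\infty} \leq \tfrac{1}{m},\qquad \|g_m'\|_{L^\infty} \leq \tfrac{1}{m^2},\qquad \|g_m''\|_{L^\infty} \leq \tfrac{3}{m^3}.
\end{equation*}
There is no genuine obstacle here; the only point requiring a moment of care is ensuring that the bounds are independent of $r \geq 0$, which is precisely what the polynomial-times-exponential bounds $e^{-t}(t+1) \leq 1$ and $e^{-t}(t^2+2t+2) \leq 2$ provide.
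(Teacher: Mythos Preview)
Your proof is correct and takes essentially the same approach as the paper: a direct elementary computation exploiting that $e^{-t}$ times any polynomial in $t$ is bounded on $[0,\infty)$, which makes the bounds uniform in $r$. The only organizational difference is that the paper first rescales to $m=1$ and then differentiates $\tilde g(x)=e^{-\tilde r\langle x\rangle}/\langle x\rangle$ directly in $x$, whereas you factor $g_m=h\circ\phi$ and apply the chain rule; your route is arguably a little cleaner and yields explicit constants $1/m$, $1/m^2$, $3/m^3$.
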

\begin{proof}
	First, by rescaling, we set $g_m(x) = \frac{1}{m}\wtilg(x/m)$ where
	\begin{equation}
		\wtilg(x) := \frac{e^{-rm \sqrt{ x^2+1}}}{\sqrt{x^2+1}} = \frac{1}{e^{\tilr \langle x \rangle}\langle x \rangle}, \quad \tilr := rm.
	\end{equation}
	Hence, it sufficient to prove the same estimate \eqref{eqn: bound on g_m} for $\wtilg(x)$. For $k=0$, it is clear that $\vert \wtilg(x) \vert \leq 1$ for all $x \in \R$. For $k=1,2$, direct computation shows that
	\begin{equation}\label{eqnproof: partial_x tilg 1}
		\partial_x \, \wtilg(x) = -\frac{ x(1+\tilr\langle x \rangle)}{e^{\tilr\langle x \rangle }\langle x \rangle^3},
	\end{equation}
	and
	\begin{equation}\label{eqnproof: partial_x tilg 2}
		\partial_x^2\, \wtilg(x) = \frac{3x^2 + 3\tilr x^2\langle x \rangle-\langle x \rangle^2 + \tilr^2 x^2\langle x \rangle^2 - \tilr\langle x \rangle^4}{e^{\tilr\langle x \rangle}\langle x \rangle^5}.
	\end{equation}
	Since $e^{-\tilr\langle x \rangle}\max\{1,\tilr,\tilr^2\} \leq 1$, it follows from \eqref{eqnproof: partial_x tilg 1}, \eqref{eqnproof: partial_x tilg 2} that the estimate \eqref{eqn: bound on g_m} holds for $\wtilg$ and thus for $g(x)$ too.
\end{proof}
The next proposition establishes the dispersive estimates for the evolution semigroup $e^{it\calH}P_\mathrm{s}^+$ for small energies  close to the threshold $\mu$. 
\begin{proposition}\label{prop: low energy bounds} Let the assumptions of Theorem~\ref{theorem: local decay estimate} hold. Let $\chi_0(z)$ be a smooth, even, non-negative cut-off function satisfying $\chi_0(z) = 1$ for $\vert z \vert \leq \frac{z_0}{2}$ and $\chi_0(z) = 0$ for $\vert z \vert \geq z_0$, where $z_0>0$ is  given by Proposition~\ref{prop: invert M}. Then, for any $\vert t \vert \geq 1$, and $\vec{u} = (u_1,u_2) \in \calS(\R) \times \calS(\R)$, we have 
	\begin{equation}\label{eqn: low energy unweighted estimate}
		\Vert e^{it\calH}\chi_0(\calH - \mu I)P_{\mathrm{s}}^+ \vec{u}\Vert_{L^\infty(\R)\times L^\infty(\R)} \lesssim \vert t \vert^{-\frac{1}{2}}\Vert \vec{u} \Vert_{L^1(\R) \times L^1(\R)},
	\end{equation}	
	and
	\begin{equation}\label{eqn: low energy weighted estimate}
		\Vert \langle x \rangle^{-2}(e^{it\calH}\chi_0(\calH - \mu I)P_{\mathrm{s}}^+ - F_t^+ )\vec{u}\Vert_{L^\infty(\R)\times L^\infty(\R)} \lesssim \vert t \vert^{-\frac{3}{2}}\Vert\langle x \rangle^{2} \vec{u} \Vert_{L^1(\R) \times L^1(\R)},
	\end{equation}	
where $F_t^+$ is defined by 
\begin{equation}\label{eqn: formula F_t^+}
	F_t^+(x,y) = \frac{e^{it\mu}}{\sqrt{-4 \pi i t}} \vec\Psi(x) [\sigma_3 \vec\Psi(y)]^\top.
\end{equation}
\end{proposition}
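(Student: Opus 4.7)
The starting point is the spectral representation \eqref{eqn: stone's formula for P_s^+} combined with the symmetric resolvent identity \eqref{eqn: symmetric resolvent identity} and the low-energy cutoff $\chi_0(z^2)$, yielding
\begin{equation*}
e^{it\calH}\chi_0(\calH-\mu I)P_{\mathrm{s}}^+ = \frac{e^{it\mu}}{\pi i}\int_\R e^{itz^2}\chi_0(z^2)z\calR_0(z)\,\ud z - \frac{e^{it\mu}}{\pi i}\int_\R e^{itz^2}\chi_0(z^2)z\calR_0(z)v_1 M(z)^{-1}v_2\calR_0(z)\,\ud z.
\end{equation*}
The free integral equals $e^{it\calH_0}\chi_0(\calH_0-\mu I)P_{\mathrm{s}}^+$, already controlled by Proposition~\ref{prop: free estimate} with leading piece $F_t^0$ from \eqref{eqn: formula F_t0}. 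The entire remaining work reduces to analyzing the perturbed integral.

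For the perturbed integral, I will substitute the Taylor expansion of $\calR_0(z)$ from Lemma~\ref{lemma: expansion of R_0} and the Laurent-type expansion of $M(z)^{-1}$ from Proposition~\ref{prop: invert M}, expanding $\calR_0 v_1 M^{-1} v_2 \calR_0$ into a finite sum of terms of the form $z^k A_k(z)(x,y)$ with $k \geq -1$. The sharp structure of the $M^{-1}$ expansion is decisive here: the four leading pieces $\frac{S_1}{dz},\,-\frac{PTS_1}{d\kappa},\,-\frac{S_1TP}{d\kappa}$, and the $zP$ corrector in \eqref{eqn: M inverse}, when sandwiched between the $\frac{i}{2z}\underline{e}_{11}$ and $\calG_0$ pieces of the two free resolvents, must telescope into a single clean residue. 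Using the orthogonality $QS_1 = S_1$ and $P\Phi = 0$, the identity $T\Phi = PT\Phi = c_0 v_2\underline{e}_1$ from \eqref{eqn: PTPhi}, the explicit form $S_1 = \Vert\Phi\Vert^{-2}\langle\Phi,\cdot\rangle\Phi$, together with the key structural relations $\vec\Psi = c_0\underline{e}_1 - \calG_0 v_1\Phi$ from \eqref{eqn: Psi} and its dual counterpart $\sigma_3\vec\Psi = c_0\underline{e}_1 + \calG_0 v_2\Phi$ from Remark~\ref{remark: psi_2}, one computes that the leading residue of $z\calR(z)$ at $z=0$ equals exactly $\frac{i}{2}\vec\Psi(x)(\sigma_3\vec\Psi(y))^\top$, in direct analogy with the free-operator case where the residue is $\frac{i}{2}\underline{e}_{11}=\frac{i}{2}\underline{e}_1\underline{e}_1^\top$ (the normalization \eqref{eqn: normalization condition} forces $\vert c_0\vert^2 + \vert c_1\vert^2 = 1$, hence $d = -2i\Vert\Phi\Vert^{-2}$, ensuring the clean $\frac{i}{2}$ prefactor).

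With the leading residue identified, Lemma~\ref{lemma: stationary phase fresnel} applied to $\int_\R e^{itz^2}\chi_0(z^2)\,\ud z$ produces the $F_t^+$ contribution of \eqref{eqn: formula F_t^+} up to weighted $\calO(\vert t\vert^{-3/2}\langle x\rangle\langle y\rangle)$ error. All remaining nonresonant terms carry at least one extra power of $z$ or a genuinely smooth $z$-factor coming from $\Lambda_j(z)$, $\calG_j$, or $E(z)$: they are controlled by Lemma~\ref{lemma: van der corput estimate}, whose estimate \eqref{eqn: VDC1} supplies the $\vert t\vert^{-1/2}$ bound required for the unweighted estimate \eqref{eqn: low energy unweighted estimate}, and whose estimate \eqref{eqn: VDC2} supplies the $\vert t\vert^{-3/2}$ bound needed for the weighted estimate \eqref{eqn: low energy weighted estimate}. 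The $\langle x\rangle^2,\langle y\rangle^2$ weights are absorbed using the growth bounds on $\calG_0,\calG_1$ from Lemma~\ref{lemma: expansion of R_0}, the error estimate \eqref{eqn: error estimate for free resolvent}, the exponential decay of $v_1, v_2$ from~(A3), and the absolute boundedness of the $\Lambda_j(z)$ guaranteed by Proposition~\ref{prop: invert M}. The principal obstacle is precisely the algebraic telescoping in the second paragraph: many terms of formal orders $z^{-2}, z^{-1}, z^0$ in the expanded product must cancel or recombine into the single rank-one residue $\frac{i}{2}\vec\Psi\otimes(\sigma_3\vec\Psi)^\top$, which is exactly the reason Proposition~\ref{prop: invert M} was formulated with the sharp residues $PTS_1, S_1TP$, and $zP$ explicitly retained rather than absorbed into generic remainder operators.
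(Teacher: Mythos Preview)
Your overall strategy is correct, and the algebraic identification of the leading residue as $\tfrac{i}{2}\vec\Psi(x)(\sigma_3\vec\Psi(y))^\top$ (using $\vec\Psi = c_0\underline{e}_1 - \calG_0 v_1\Phi$, its $\sigma_3$-dual, and the normalization $|c_0|^2+|c_1|^2=1$) is exactly right. But there is a genuine gap in how you propose to control the remainders.

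You say you will ``substitute the Taylor expansion of $\calR_0(z)$ from Lemma~\ref{lemma: expansion of R_0}'' for \emph{both} outer resolvents and then hand the resulting $z^kA_k(z)$ terms to Lemma~\ref{lemma: van der corput estimate}. This cannot give the unweighted estimate \eqref{eqn: low energy unweighted estimate}: the Taylor coefficients $\calG_0(x,x_1)\sim |x-x_1|$ and $\calG_1(x,x_1)\sim |x-x_1|^2$ grow unboundedly in the \emph{external} variable $x$, and the decay of $v_1(x_1)$ only controls $x_1$. So after one $z$-derivative (needed for \eqref{eqn: VDC1}) your amplitude carries at least an $\langle x\rangle$ weight, contradicting the $L^1\to L^\infty$ claim. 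For the weighted estimate the situation is worse: the error $E(z)(x,x_1)$ from \eqref{eqn: error estimate for free resolvent} already carries $\langle x\rangle^3$, and after the two $z$-derivatives needed for \eqref{eqn: VDC2} it carries $\langle x\rangle^5$, far exceeding the $\langle x\rangle^2$ budget.

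What the paper does instead is keep the outer resolvents in oscillatory form $\calR_0 = \calR_1+\calR_2$ and exploit the orthogonality $\int \underline{e}_{11}v_1(x_1)Q(x_1,\cdot)\,dx_1=0$ (available whenever $Q$ or $S_1$ flanks $v_1$): one may then subtract $e^{iz|x|}$ from $e^{iz|x-x_1|}$ for free and write the difference via the fundamental theorem of calculus as $iz\int_{|x|}^{|x-x_1|}e^{izs}\,ds$. This trick gains a full power of $z$ at the cost of only an $\langle x_1\rangle$ weight (length of the $s$-interval), which \emph{is} absorbed by $v_1$. The surviving $e^{iz|x|}$ phases (present only when $P$, not $Q$, flanks the potential) are fed to Lemma~\ref{lemma: stationary phase fresnel} and produce intermediate pieces $F_t^1,\dots,F_t^4$ carrying factors $e^{-ix^2/4t}$, $e^{-iy^2/4t}$; it is only at the very end that these telescope with the free piece $F_t^0$ into $F_t^+$ via $|1-e^{-ix^2/4t}|\lesssim |t|^{-1}\langle x\rangle^2$, which is the actual origin of the $\langle x\rangle^2$ weight. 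Your ``residue first, then stationary phase on $\int e^{itz^2}\chi_0\,dz$'' shortcut skips this mechanism and therefore does not close.
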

We begin with the proof of the  dispersive decay estimate \eqref{eqn: low energy unweighted estimate}.
\begin{proof}[Proof of \eqref{eqn: low energy unweighted estimate}]
	 We recall the spectral representation from \eqref{eqn: stone's formula for P_s^+}:
	\begin{equation*}
		e^{it\calH}P_{\mathrm{s}}^+ = \frac{e^{it\mu}}{\pi i} \int_{\R} e^{itz^2}z\mathcal{R}_0(z) \,\ud z - \frac{e^{it\mu}}{\pi i} \int_{\R} e^{itz^2}z\mathcal{R}_0(z)v_1(M(z))^{-1}v_2\mathcal{R}_0(z)\,\ud z.
	\end{equation*}
	Note that the first term on the right is the spectral representation for the free evolution $e^{it\calH_0}P_{\mathrm{s}}^+$ and it satisfies the same estimate as \eqref{eqn: low energy unweighted estimate} thanks to Proposition~\ref{prop: free estimate}. We insert the weaker expansion \eqref{eqn: M inverse weak} for $M(z)^{-1}$ following Remark~\ref{remark: M inverse}, and write
	\begin{equation*}
		\begin{split}
			&\int_{\R} e^{itz^2}z\chi_0(z^2)\mathcal{R}_0(z)v_1(M(z))^{-1}v_2\mathcal{R}_0(z)\,\ud z\\
			&=\int_{\R} e^{itz^2}\chi_0(z^2)\mathcal{R}_0(z)v_1 Q\widetilde{\Lambda}_0(z)Q v_2\mathcal{R}_0(z)\,\ud z + \int_{\R} e^{itz^2}z\chi_0(z^2)\mathcal{R}_0(z)v_1 Q\widetilde{\Lambda}_1(z) v_2\mathcal{R}_0(z)\,\ud z \\
			&\quad +\int_{\R} e^{itz^2}z\chi_0(z^2)\mathcal{R}_0(z)v_1 \widetilde{\Lambda}_2(z)Q v_2\mathcal{R}_0(z)\,\ud z+\int_{\R} e^{itz^2}z^2\chi_0(z^2)\mathcal{R}_0(z)v_1 \widetilde{\Lambda}_3(z) v_2\mathcal{R}_0(z)\,\ud z\\
			&=: J_1 + J_2 + J_3 + J_4.
		\end{split}
	\end{equation*}
	It remains to show that
	\begin{equation}\label{eqn: bound on J_k}
		\left \Vert J_k\right \Vert_{L^1\to L^\infty} \leq C \vert t \vert^{-\frac{1}{2}}, \quad \forall \ k=1,\ldots,4 .
	\end{equation}
We treat the case for $J_1$ since the other cases follow similarly. First, we recall the kernel of $\calR_0(z)$ from \eqref{eqn: free resolvent} and write
\begin{equation}\label{eqn: def calR_1 calR_2}
	\calR_0(z)(x,y) := \calR_1(z)(x,y) + \calR_2(z)(x,y) := \frac{ie^{iz \vert x - y \vert}}{2z} \underline{e}_{11} + \frac{-e^{-\sqrt{z^2 + 2\mu}\vert x - y \vert}}{2\sqrt{z^2 + 2\mu}}   \underline{e}_{22},
\end{equation}
and we further decompose the integral $J_1$ as
\begin{equation*}
	J_1 = J_{1}^{(1,1)} + J_{1}^{(1,2)} + J_{1}^{(2,1)} + J_{1}^{(2,2)},
\end{equation*}
where
\begin{equation*}
	J_{1}^{(i,j)}(x,y) := \int_\R e^{itz^2}\chi_0(z^2)[\mathcal{R}_i(z)v_1 Q\widetilde{\Lambda}_0(z)Q v_2\mathcal{R}_j(z)](x,y)\,\ud z, \quad i,j \in \{1,2\}.
\end{equation*}
We begin with the most singular term
\begin{equation}
	J_{1}^{(1,1)}(x,y) = \int_{\R^3}  e^{itz^2 + iz(\vert x - x_1 \vert + \vert y - y_1 \vert)} \frac{\chi_0(z^2)}{(2iz)^2}  [\underline{e}_{11}v_1Q\widetilde{\Lambda}_0(z)Qv_2\underline{e}_{11}](x_1,y_1) \,\ud z \,\ud x_1 \,\ud y_1 .
\end{equation}
The orthogonality conditions \eqref{eqn: orthogonality condition} imply that 
\begin{equation}\label{eqn: Q orthogonality trick}
\int_\R e^{iz \vert x \vert} \underline{e}_{11} v_1(x_1)Q(x_1,x_2) \,\ud x_1 = 	\int_\R e^{iz \vert y \vert} Q(y_2,y_1)v_2(y_1)\underline{e}_{11} \,\ud y_1 = \mathbf{0}.
\end{equation}
Hence, writing
\begin{equation}\label{eqn: FTC for e^izr}
	e^{iz\vert x - x_1\vert} - e^{iz\vert x \vert} = iz \int_{\vert x \vert}^{\vert x - x_1 \vert}e^{izs_1} \,\ud s_1 \quad \text{and} \quad e^{iz\vert y - y_1\vert} - e^{iz\vert y \vert} = iz \int_{\vert y \vert}^{\vert y - y_1 \vert}e^{izs_2} \,\ud s_2,
\end{equation}
we obtain
\begin{equation*}
	J_{1}^{(1,1)}(x,y) = \frac{1}{4}\int_{\R^2} \int_{\vert x \vert}^{\vert x - x_1 \vert}\int_{\vert y \vert}^{\vert y - y_1 \vert}\int_{\R} e^{itz^2 + iz(s_1+s_2)} A(z,x_1,y_1) \,\ud s_1 \,\ud s_2 \,\ud x_1 \,\ud y_1 \,\ud z,
\end{equation*}
where $A(z,x_1,y_1) = \chi_0(z^2)[\underline{e}_{11}v_1Q\widetilde{\Lambda}_0(z)Qv_2\underline{e}_{11}](x_1,y_1)$, and note that $A$ is differentiable and compactly supported in $z$ due to Proposition~\ref{prop: invert M} and the compact support of $\chi_0(z^2)$. We obtain by Lemma~\ref{lemma: van der corput estimate} and the Fubini theorem that 
\begin{equation*}
	\left \vert J_{1}^{(1,1)}(x,y) \right \vert \leq C \vert t\vert^{-\frac{1}{2}}\int_{\R^2} \int_{\vert x \vert}^{\vert x - x_1 \vert}\int_{\vert y \vert}^{\vert y - y_1 \vert}\int_{\R} \left \vert \partial_z A(z,x_1,x_2)\right \vert \,\ud z \,\ud s_1 \,\ud s_2 \,\ud x_1 \,\ud y_1.
\end{equation*}
Using
\begin{equation}
	\int_{\vert x \vert}^{\vert x - x_1 \vert}\int_{\vert y \vert}^{\vert y - y_1 \vert} 1 \,\ud s_1 \,\ud s_2 \leq \vert \vert x - x_1 \vert - \vert x \vert \vert \cdot \vert \vert y - y_1 \vert - \vert y \vert \vert \lesssim \langle x_1 \rangle \langle y_1 \rangle,
\end{equation}
as well as
\begin{equation}
	\partial_z A(z,x_1,y_1) = [\underline{e}_{11}v_1Q \partial_z(\chi_0(z^2)\widetilde{\Lambda_0}(z))Qv_2\underline{e}_{11}](x_1,y_1),
\end{equation}
along with the bound \eqref{eqn: condition on Lambda_j} on $\widetilde{\Lambda}_0$, we deduce that
\begin{equation}\label{eqn: 6.16}
	\begin{split}
		&\int_{\R^2} \int_{\vert x \vert}^{\vert x - x_1 \vert}\int_{\vert y \vert}^{\vert y - y_1 \vert}\int_{\R} \left \vert \partial_z A(z,x_1,x_2)\right \vert \,\ud z \,\ud s_1 \,\ud s_2 \,\ud x_1 \,\ud y_1\\
		&\leq C\Vert Q \Vert_{L^2 \to L^2}^2 \Vert \langle x_1 \rangle v_1(x_1)\Vert_{L^2(\R)}  \Vert \langle y_1 \rangle v_2(y_1)\Vert_{L^2(\R)}  \\
		&\qquad \cdot \int_{[-z_0,z_0]}  (\Vert \vert \widetilde{\Lambda}_0(z)\vert \Vert_{L^2\times L^2 \to L^2\times L^2} + \Vert \vert \partial_z \widetilde{\Lambda}_0(z)\vert \Vert_{L^2\times L^2 \to L^2\times L^2}) \,\ud z \\
		&\lesssim 1.
	\end{split}
\end{equation}
Hence, 
\begin{equation*}
	\Vert J_{1}^{(1,1)}\Vert_{L^1 \times L^1 \rightarrow L^\infty \times L^\infty} \leq C \vert t \vert^{-\frac{1}{2}}.
\end{equation*}
 Next, we consider the least singular term 
\begin{equation}
	J_1^{(2,2)}(x,y) = \int_{\R^3} e^{itz^2}B(z,x,y,x_1,y_1) \,\ud x_1  \,\ud y_1  \,\ud z,
\end{equation}
where 
\begin{equation}
	B(z,x,y,x_1,y_1) := e^{-\sqrt{z^2+2\mu}(\vert x - x_1 \vert + \vert y - y_1 \vert)} \frac{ \chi_0(z^2) }{4(z^2+2\mu)} [\underline{e}_{22}v_1Q\widetilde{\Lambda}_0(z)Qv_2\underline{e}_{22}](x_1,y_1).
\end{equation}
By Lemma~\ref{lemma: van der corput estimate}, we have
\begin{equation}\label{eqn: bound on J_1_22}
	\vert J_{1}^{(2,2)}(x,y)\vert \leq C \vert t \vert^{-\frac{1}{2}},
\end{equation}
if we can show the uniform estimate 
\begin{equation*}
	\sup_{x,y \in \R} \int_{\R^3} \vert \partial_z B(z,x,y,x_1,y_1)\vert \,\ud z \,\ud x_1 \,\ud y_1 \lesssim 1.
\end{equation*}
By Lemma~\ref{lemma: bound on 2,2 resolvent}, we have
\begin{equation*}
	\sup_{z\in \R} \left \vert \partial_z^k \left( \frac{ e^{-\sqrt{z^2+2\mu}(\vert x - x_1 \vert + \vert y - y_1 \vert)} }{4(z^2+2\mu)}\right) \right \vert \leq C_\mu \lesssim 1, \quad k=0,1,
\end{equation*} 
uniformly in the $x,y,x_1,y_1$ variables. Hence, using the Cauchy-Schwarz inequality in the $x_1,y_1$ variables and the bound \eqref{eqn: condition on Lambda_j} on $\widetilde{\Lambda}_0$, we have 
\begin{equation*}
	\begin{split}
		&\int_{\R^3} \vert \partial_z B(z,x,y,x_1,y_1)\vert \,\ud z \,\ud x_1 \,\ud y_1 \\
		&\leq C_\mu \int_{\R^3} \left \vert (1+\partial_z)\chi_0(z^2)[\underline{e}_{22}v_1Q\widetilde{\Lambda}_0(z)Qv_2\underline{e}_{22}](x_1,y_1) \right \vert \,\ud z \,\ud x_1 \,\ud y_1\\
		&\lesssim \Vert Q \Vert_{L^2 \times L^2 \to L^2 \times L^2}^2 \Vert v_1\Vert_{L^2(\R)}  \Vert   v_2\Vert_{L^2(\R)}  \\
		&\qquad  \int_{[-z_0,z_0]}  \big(\Vert \vert \widetilde{\Lambda}_0(z)\vert \Vert_{L^2 \times L^2 \to L^2 \times L^2} + \Vert \vert \partial_z \widetilde{\Lambda}_0(z)\vert \Vert_{L^2 \times L^2 \to L^2 \times L^2}\big) \,\ud z \\
		&\lesssim 1.
	\end{split}
\end{equation*}
Hence, the bound \eqref{eqn: bound on J_1_22} is proven. The remaining terms $J_1^{(1,2)}$ and $J_1^{(2,1)}$ can be treated similarly with the same techniques, while for the remaining cases $J_2,J_3$, and $J_4$, we use the additional powers of $z$ in place of the missing $Q$ operators to obtain the same bounds \eqref{eqn: bound on J_k} as the term $J_1$. This finishes the proof of \eqref{eqn: low energy unweighted estimate}.
\end{proof}
Next, we turn to the proof of the low-energy weighted estimate \eqref{eqn: low energy weighted estimate}. 

\begin{proof}[Proof of \eqref{eqn: low energy weighted estimate}]
Recall that the threshold resonance function $\Psi = (\Psi_1,\Psi_2)^\top$ has been normalized in Theorem~\ref{theorem: local decay estimate}, which means that we need to carefully treat the constants relating to the function $\Phi$ where $\Phi := v_2\Psi$. By Lemma~\ref{lemma: threshold resonance characterization}, note that $\Phi$ spans the subspace $S_1(L^2(\R)\times L^2(\R))$. We define 
\begin{equation}
	\eta := \Vert \Phi \Vert_{L^2(\R) \times L^2(\R)}^{-2} \neq 0,
\end{equation}
so that $S_1(x,y) =  \eta\Phi(x)\Phi^*(y)$, and we fix the constants $c_0$ and $c_1$ defined by \eqref{eqn: def c_0} and \eqref{def: c_1} respectively for this $\Phi$. By Lemma~\ref{lemma: threshold resonance characterization}, one finds the relation
\begin{equation}
	2 = \lim_{x \to \infty}(\vert \Psi_1(x)\vert^2 + \vert \Psi_1(-x)\vert^2) = 2(\vert c_0 \vert^2 + \vert c_1 \vert^2),
\end{equation}
by the polarization identity (c.f.~\eqref{eqn: asymptotics of Psi_1}). Thus, the precise expansion \eqref{eqn: M inverse} of $M(z)^{-1}$ from Proposition~\ref{prop: invert M} simplifies to 
\begin{equation}
	\begin{split}
	M(z)^{-1} &= \frac{i}{2\eta z}S_1 + \frac{1}{\eta \Vert V_1 \Vert_{L^1(\R)}} PTS_1 + \frac{1}{\eta \Vert V_1 \Vert_{L^1(\R)}} S_1TP + \left(\frac{2i}{\Vert V_1 \Vert_{L^1(\R)}} + \frac{2 \vert c_0 \vert^2}{i\Vert V_1 \Vert_{L^1(\R) }} \right)zP		\\
	&\quad +  Q \Lambda_0(z) Q + zQ\Lambda_1(z) + z\Lambda_2(z)Q + z^2\Lambda_3(z), \qquad 0 < \vert z \vert < z_0.
	\end{split}
\end{equation}
We insert the above expression into the spectral representation of $e^{it\calH}\chi_0(\calH - \mu I)P_{\mathrm{s}}^+$, and obtain that
\begin{equation} \label{eqn: spectral representation low energy}
	\begin{split}
		&e^{it\calH}\chi_0(\calH - \mu I)P_{\mathrm{s}}^+  \\
		&= \frac{e^{it\mu}}{\pi i}\int_{\R}e^{itz^2}z\chi_0(z^2)\calR_0(z) \, \ud z - \frac{e^{it\mu}}{\pi i}\int_{\R}e^{itz^2}z\chi_0(z^2)\calR_0(z)v_1(M(z))^{-1}v_2\calR_0(z) \, \ud z   \\
		&= \frac{e^{it\mu}}{\pi i}I_1 \\
		&\quad -\frac{e^{it\mu}}{\pi i}\left( \frac{i}{2\eta}I_{2,1} + \frac{1}{\eta\Vert V_1 \Vert_{L^1(\R)}}I_{2,2} + \frac{1}{\eta\Vert V_1 \Vert_{L^1(\R)}} I_{2,3} + \left(\frac{2i}{\Vert V_1 \Vert_{L^1(\R)}} + \frac{2 \vert c_0 \vert^2}{i\Vert V_1 \Vert_{L^1(\R) }} \right)I_{2,4}\right)\\
		&\quad -\frac{e^{it\mu}}{\pi i}\left(I_{3,1} + I_{3,2} + I_{3,3} + I_{3,4}\right),
	\end{split}
\end{equation}
where
\begin{align}
&I_1 := \int_\R e^{itz^2}z \chi_0(z^2) \calR_0(z)  \,\ud z,\label{eqn: I_1 def}\\
&I_{2,1} := \int_\R e^{itz^2} \chi_0(z^2) [\calR_0(z)v_1 S_1 v_2\calR_0(z)] \,\ud z,\label{eqn: I_2 def}\\
&I_{2,2} := \int_\R e^{itz^2} z\chi_0(z^2) [\calR_0(z)v_1 S_1 T P  v_2\calR_0(z)] \,\ud z,\\
&I_{2,3} := \int_\R e^{itz^2} z\chi_0(z^2) [\calR_0(z)v_1 P T S_1  v_2\calR_0(z)] \,\ud z,\\
&I_{2,4} := \int_\R e^{itz^2} z^2\chi_0(z^2) [\calR_0(z)v_1   P  v_2\calR_0(z)] \,\ud z,
\end{align}
and
\begin{align}
&I_{3,1} := \int_\R e^{itz^2} z\chi_0(z^2) [\calR_0(z)v_1  Q \Lambda_0 (z) Q  v_2\calR_0(z)] \,\ud z,\label{eqn: I_{4,1} def}\\
&I_{3,2} := \int_\R e^{itz^2} z^2\chi_0(z^2) [\calR_0(z)v_1 Q \Lambda_1(z) v_2\calR_0(z)] \,\ud z,\\
&I_{3,3} := \int_\R e^{itz^2} z^2\chi_0(z^2) [\calR_0(z)v_1  \Lambda_2(z)Q  v_2\calR_0(z)] \,\ud z,\\
&I_{3,4} := \int_\R e^{itz^2} z^3\chi_0(z^2) [\calR_0(z)v_1  \Lambda_3(z)  v_2\calR_0(z)] \,\ud z\label{eqn: I_{4,4} def}.
\end{align}
Now we study the local decay of the terms $I_1$, $I_{2,j}$, $I_{3,\ell}$, for $j,\ell \in \{1,\ldots,4\}$ and we will observe in the following propositions that the terms  $I_1,I_{2,1},\ldots,I_{2,4}$ contribute to the leading order for the local decay estimate while the remainder terms $I_{3,1}, \ldots, I_{3,4}$ satisfy the stronger local decay estimate  $\calO(\vert t \vert^{-\frac{3}{2}}\langle x \rangle \langle y \rangle)$. We first handle these remainder terms by Lemma~\ref{lemma: van der corput estimate} in a similar spirit to the proof for the (unweighted) dispersive bound \eqref{eqn: low energy unweighted estimate}, exploiting the additional power of $z$.
\begin{proposition}\label{prop: I_4 estimate}
	For $i\in \{1,2,\ldots,4\}$ and $\vert t\vert \geq 1$, we have
	\begin{equation}
		\vert I_{3,i}(x,y) \vert \leq C \vert t\vert^{-\frac{3}{2}}\langle x \rangle \langle y \rangle. 
	\end{equation}
\end{proposition}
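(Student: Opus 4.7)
The plan is to treat each of the four terms $I_{3,i}$ by mimicking the argument that yielded \eqref{eqn: bound on J_k} in the proof of the unweighted bound, but now pushing one extra integration by parts in $z$ via \eqref{eqn: VDC2} of Lemma~\ref{lemma: van der corput estimate}. The common mechanism is the following: decompose $\calR_0(z) = \calR_1(z) + \calR_2(z)$ as in \eqref{eqn: def calR_1 calR_2}, split each $I_{3,i}$ into four cross terms $I_{3,i}^{(j,k)}$ with $j,k\in\{1,2\}$, and then use the explicit powers of $z$ present in each integrand together with the orthogonality \eqref{eqn: Q orthogonality trick} of $Q$ against $\underline{e}_{11}v_1$ and $v_2\underline{e}_{11}$ so that the net $z$-amplitude vanishes to first order at $z=0$. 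Once this is achieved, $\psi(z)/z$ is smooth and compactly supported, so \eqref{eqn: VDC2} produces the desired $|t|^{-3/2}$ bound.

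The key case to understand is $I_{3,1}^{(1,1)}$, where both resolvents are singular and only the single factor $z$ from the integrand of $I_{3,1}$ is available. Writing
\begin{equation*}
e^{iz|x-x_1|} = e^{iz|x|} + iz\int_{|x|}^{|x-x_1|}e^{izs_1}\,ds_1,\qquad e^{iz|y-y_1|} = e^{iz|y|} + iz\int_{|y|}^{|y-y_1|}e^{izs_2}\,ds_2,
\end{equation*}
the orthogonality kills the constant pieces $e^{iz|x|}$ and $e^{iz|y|}$, so that after combining the two $(iz)$ factors with the $1/(2iz)^2$ from $\calR_1(z)\calR_1(z)$ and the explicit $z$ from $I_{3,1}$, the net amplitude becomes $z\chi_0(z^2)[\underline{e}_{11}v_1 Q\Lambda_0(z)Qv_2\underline{e}_{11}](x_1,y_1)$. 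Applying \eqref{eqn: VDC2} with $r=s_1+s_2$ and bounding $|s_1+s_2|\leq |x|+|x_1|+|y|+|y_1|$, the $ds_j$-intervals having lengths $\leq|x_1|$ and $\leq|y_1|$ respectively, together with the exponential decay of $v_1, v_2$ from assumption~(A3) and the absolute bound \eqref{eqn: condition on Lambda_j} on $\partial_z^k\Lambda_0(z)$, produce a bound of the shape $|t|^{-3/2}(1+|x|+|y|)\lesssim |t|^{-3/2}\langle x\rangle\langle y\rangle$ as desired.

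The mixed terms $I_{3,1}^{(1,2)}$ and $I_{3,1}^{(2,1)}$ have only one singular resolvent, so only one application of the orthogonality trick is needed; the smooth factor arising from $\calR_2(z)$ and its $z$-derivatives is controlled uniformly by Lemma~\ref{lemma: bound on 2,2 resolvent} and yields only a single weight $\langle x\rangle$ or $\langle y\rangle$. The remaining term $I_{3,1}^{(2,2)}$ needs no orthogonality trick at all, since both resolvents are smooth in $z$ and the lone factor of $z$ from $I_{3,1}$ already makes $\psi(z)/z$ smooth. The analogous decompositions for $I_{3,2}, I_{3,3}, I_{3,4}$ work identically: the extra explicit powers of $z$ in those integrands (one for each ``missing'' $Q$ projection) restore the vanishing of $\psi(z)$ at $z=0$ so that \eqref{eqn: VDC2} applies in every sub-case.

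The main bookkeeping obstacle will be verifying in each sub-case that every polynomial factor in $x_1, y_1$ picked up from the $ds$-intervals, from the $(1+|s_1+s_2|)$-factor in \eqref{eqn: VDC2}, from $z$-derivatives of $e^{-\sqrt{z^2+2\mu}|\cdot|}/\sqrt{z^2+2\mu}$ (which we handle with Lemma~\ref{lemma: bound on 2,2 resolvent}), and from $z$-derivatives of $\Lambda_j(z)$, is absorbed by the exponential decay of $v_1,v_2$ together with the absolute boundedness of $\Lambda_j$ through a Cauchy–Schwarz-type argument as in \eqref{eqn: 6.16}, leaving precisely the weight $\langle x\rangle\langle y\rangle$ on the outside. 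This is routine but lengthy, and I expect it to be the only delicate step in the proof.
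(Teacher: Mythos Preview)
Your proposal is correct and follows essentially the same approach as the paper: both decompose $\calR_0 = \calR_1 + \calR_2$, use the $Q$-orthogonality \eqref{eqn: Q orthogonality trick} to gain the extra factor of $z$ needed on each singular side, apply the stronger van der Corput estimate \eqref{eqn: VDC2}, and handle the $\calR_2$ pieces via Lemma~\ref{lemma: bound on 2,2 resolvent}. The only cosmetic difference is that the paper packages the $s$-integral bounds as $\int_{|x|}^{|x-x_1|}\int_{|y|}^{|y-y_1|}(s_1+s_2)\,\ud s_1\,\ud s_2 \lesssim \langle x_1\rangle^2\langle y_1\rangle^2\langle x\rangle\langle y\rangle$ rather than your pointwise bound on $|s_1+s_2|$, but this is equivalent.
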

\begin{proof}
	We treat the case for $I_{3,1}$ as the other cases follow similarly by using the additional powers of $z$ in place of the missing operators $Q$. As before, we consider the decomposition 
	\begin{equation*}
		I_{3,1}  = I_{3,1}^{(1,1)} + I_{3,1}^{(1,2)} + I_{3,1}^{(2,1)} + I_{3,1}^{(2,2)},
	\end{equation*}
	where 
	\begin{equation*}
		I_{3,1}^{(i,j)} := \int_{\R} e^{itz^2}z\chi_0(z^2)[\calR_i(z)v_1Q\Lambda_0(z)Qv_2\calR_j(z)] \,\ud z, \quad i,j\in \{1,2\},
	\end{equation*}
	with $\calR_{1}$ and $\calR_2$ defined in \eqref{eqn: def calR_1 calR_2}. We begin with the term
	\begin{equation*}
		I_{3,1}^{(1,1)}(x,y) = \int_{\R^3}  e^{itz^2 + iz(\vert x - x_1 \vert + \vert y - y_1 \vert)} \frac{z\chi_0(z^2)}{(2iz)^2}  [\underline{e}_{11}v_1Q  {\Lambda}_0(z)Qv_2\underline{e}_{11}](x_1,y_1) \,\ud z \,\ud x_1 \,\ud y_1.
	\end{equation*}
	 Using the orthogonality condition \eqref{eqn: orthogonality condition} like in \eqref{eqn: FTC for e^izr}, we obtain
	\begin{equation*}
		\begin{split}
			I_{3,1}^{(1,1)}(x,y) &= \frac{1}{4}\int_{\R^2}\int_{\vert x \vert}^{\vert x - x_1 \vert} \int_{\vert y \vert}^{\vert y - y_1\vert} \int_{\R}e^{itz^2+iz(s_1+s_2)}z A(z,x_1,y_1) \,\ud z \,\ud s_1 \,\ud s_2 \,\ud x_1 \,\ud y_1,
		\end{split}
	\end{equation*}
	where $A(z,x_1,y_1) :=  \chi_0(z^2)[\underline{e}_{11}v_1Q\Lambda_0(z)v_2Q\underline{e}_{11}](x_1,y_1)$.  By Lemma~\ref{lemma: van der corput estimate}, we obtain that
	\begin{equation}\label{eqnproof: I_41 1}
		\begin{split}
		&\left\vert I_{3,1}^{(1,1)}(x,y) \right\vert \\
		&\lesssim \vert t \vert^{-\frac{3}{2}} \int_{\R^2}\int_{\vert x \vert}^{\vert x - x_1 \vert} \int_{\vert y \vert}^{\vert y - y_1\vert}\int_{[-z_0,z_0]}  \big(\vert \partial_z^2 A  \vert + (s_1+s_2)\vert \partial_z A \vert + \vert A \vert \big) \,\ud z \,\ud s_1 \,\ud s_2 \,\ud x_1 \,\ud y_1.
		\end{split}
	\end{equation}
	Using the bounds
	\begin{equation}\label{eqn: bound on s_1,s_2}
		\begin{split}
			&\int_{\vert x \vert}^{\vert x - x_1 \vert} \int_{\vert y \vert}^{\vert y - y_1\vert} 1 \,\ud s_1 \,\ud s_2  \lesssim \langle x_1 \rangle \langle y_1 \rangle, \\ 
			&\int_{\vert x \vert}^{\vert x - x_1 \vert} \int_{\vert y \vert}^{\vert y - y_1\vert} (s_1+s_2) \,\ud s_1 \,\ud s_2  \lesssim \langle x_1 \rangle^2 \langle y_1 \rangle^2 \langle x \rangle \langle y \rangle, 
		\end{split}
	\end{equation}
	we have
	\begin{equation*}
		\left\vert I_{3,1}^{(1,1)} (x,y) \right\vert \lesssim \vert t \vert^{-\frac{3}{2}} \int_{\R^2} \int_{[\vert z \vert \leq z_0]} \langle x_1 \rangle \langle y_1 \rangle (\vert \partial_z^2 A  \vert + \langle x_1 \rangle \langle y_1 \rangle \langle x \rangle \langle y \rangle\vert \partial_z A \vert + \vert A \vert ) \,\ud z \,\ud x_1 \,\ud y_1.
	\end{equation*}
	Noting that $\langle x\rangle v_1(x_1)$ and $\langle y_1 \rangle v_2(y_1)$ are in $L^2$ and that $\Lambda_0$ satisfies the bound \eqref{eqn: condition on Lambda_j}, we apply Cauchy-Schwarz inequality in $x_1$ and $y_1$ variables to obtain the bound
	\begin{equation}\label{eqnproof: I_41 4}
		\begin{split}
			\vert I_{3,1}^{(1,1)}(x,y)  \vert &\lesssim \vert t \vert^{-\frac{3}{2}} \Vert Q \Vert_{L^2 \to L^2}^2 \Vert \langle x_1 \rangle v_1 \Vert_{L_{x_1}^2(\R) }\Vert \langle y_1 \rangle v_2 \Vert_{L_{y_1}^2(\R) }\\
			&\qquad \cdot \int_{[\vert z \vert \leq z_0]} (\Vert \vert \partial_z^2 \Lambda_0(z) \vert \Vert_{L^2 \times L^2 \to L^2 \times L^2} + \Vert \vert \Lambda_0(z) \vert \Vert_{L^2 \times L^2\to L^2\times L^2} ) \,\ud z \\
			&\quad +\vert t \vert^{-\frac{3}{2}}\langle x \rangle \langle y \rangle \Vert Q \Vert_{L^2 \to L^2}^2\Vert \langle x_1 \rangle v_1 \Vert_{L_{x_1}^2(\R) }\Vert \langle y_1 \rangle v_2 \Vert_{L_{y_1}^2(\R) }  \\
			&\qquad \cdot \int_{[\vert z \vert \leq z_0]} \Vert \vert \partial_z \Lambda_0(z) \vert \Vert_{L^2 \times L^2 \to L^2\times L^2 } \,\ud z 		\\
			&\lesssim \vert t \vert^{-\frac{3}{2}}\langle x \rangle \langle y \rangle .
		\end{split}
	\end{equation}
	Next, we consider the term 
	\begin{equation*}
		I_{3,1}^{(1,2)}(x,y) = \int_{\R^3}  e^{itz^2 + iz\vert x - x_1 \vert - \sqrt{z^2+2\mu}\vert y - y_1 \vert} \frac{\chi_0(z^2)}{4i\sqrt{z^2+2\mu}}  [\underline{e}_{11}v_1Q  {\Lambda}_0(z)Qv_2\underline{e}_{22}](x_1,y_1) \,\ud z \,\ud x_1 \,\ud y_1.
	\end{equation*}
	By using the $Q$ orthogonality (c.f. \eqref{eqn: orthogonality condition}) condition, we write 
	\begin{equation}
		\begin{split}
		I_{3,1}^{(1,2)}(x,y) = \int_{\R^3}\int_{\vert x \vert}^{\vert x - x_1 \vert }  e^{itz^2 + izs_1} zB(z,x_1,y_1,x,y) \,\ud s_1 \,\ud z \,\ud x_1 \,\ud y_1,
		\end{split}
	\end{equation}
	where
	\begin{equation}
		B(z,x_1,y_1,x,y) := \frac{e^{- \sqrt{z^2+2\mu}\vert y - y_1 \vert}}{4i\sqrt{z^2+2\mu}} \chi_0(z^2) [\underline{e}_{11}v_1Q  {\Lambda}_0(z)Qv_2\underline{e}_{22}](x_1,y_1) .
	\end{equation}
	Since $B$ is compactly supported in $z$, we can exchange the order of integration and we use Lemma~\ref{lemma: van der corput estimate} to obtain 
	\begin{equation*}
		\left\vert I_{3,1}^{(1,2)}(x,y) \right\vert \leq C \vert t \vert^{-\frac{3}{2}} \int_{\R^2}\int_{\vert x \vert}^{\vert x - x_1\vert} \int_{\R} \vert [\partial_z^2 + is_1 \partial_z] B(z,x_1,y_1,x,y) \vert \,\ud z \,\ud s_1 \,\ud x_1 \,\ud y_1.
	\end{equation*}
	By Lemma~\ref{lemma: bound on 2,2 resolvent}, we have
	\begin{equation}
		\sup_{z \in \R} \left \vert \partial_z^k  \left(\tfrac{e^{- \sqrt{z^2+2\mu}\vert y - y_1 \vert}}{4i\sqrt{z^2+2\mu}}\right) \right \vert \leq C_\mu \lesssim 1, \quad \forall k=0,1,2,
	\end{equation}
	which implies by H\"older's inequality and Leibniz rule that 
	\begin{equation*}
		\begin{split}
			&\int_{\R} \left \vert [\partial_z^2 + is_1 \partial_z] B(z,x_1,y_1,x,y) \right \vert \,\ud z  \\
			&\quad \leq C \langle s_1 \rangle \int_{\R}  \left\vert \underline{e}_{11}v_1Q  [1+\partial_z + \partial_z^2](\chi_0(z^2){\Lambda}_0(z))Qv_2\underline{e}_{22} \right\vert \,\ud z.
		\end{split}
	\end{equation*}
	Repeating the arguments from \eqref{eqnproof: I_41 1}--\eqref{eqnproof: I_41 4}, we obtain 
	\begin{equation*}
		\left\vert I_{3,1}^{(1,2)}(x,y) \right\vert \leq C \vert t \vert^{-\frac{3}{2}}\langle x \rangle.
	\end{equation*}
	Similarly, one has the bounds
	\begin{equation*}
		\left\vert I_{3,1}^{(2,1)}(x,y) \right\vert \leq C \vert t \vert^{-\frac{3}{2}}\langle y \rangle, \qquad \left\vert I_{3,1}^{(2,2)}(x,y) \right\vert \leq C \vert t \vert^{-\frac{3}{2}},
	\end{equation*}
	and we are done.
\end{proof}

\begin{proposition}\label{prop: I_2 estimate} For all $ \vert t \vert \geq 1$, we have 
\begin{equation}\label{eqn: I_{2,1} estimate}
	\left\vert I_{2,1}(x,y) - F_t^{1}(x,y) \right\vert \leq C \vert t \vert^{-\frac{3}{2}}\langle x\rangle^2 \langle y \rangle^2,
\end{equation}	
where 
\begin{equation}\label{eqn: def F_t1}
	F_t^1(x,y) := \frac{\eta\sqrt{\pi}}{{\sqrt{-it}}} [c_0 \underline{e}_1 - \Psi(x)][\sigma_3 \Psi(y) - {c_0} \underline{e}_1]^*.
\end{equation}
\end{proposition}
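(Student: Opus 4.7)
The plan is to exploit the rank-one structure $S_1 = \eta\, \Phi \Phi^*$ (with $\eta = \|\Phi\|_{L^2\times L^2}^{-2}$ and $\Phi = v_2\Psi$) to factor the integrand. Writing
\[
[\calR_0(z)v_1 S_1 v_2 \calR_0(z)](x,y) = \eta\, F(z,x)\, G(z,y),
\]
with $F(z,x) := \int \calR_0(z)(x,x_1) v_1(x_1)\Phi(x_1)\,\ud x_1$ and $G(z,y) := \int \Phi^*(y_1)v_2(y_1)\calR_0(z)(y_1,y)\,\ud y_1$, I would first show that the singular $\frac{i}{2z}\underline{e}_{11}$ term in the Laurent expansion of $\calR_0$ from Lemma~\ref{lemma: expansion of R_0} contributes nothing to either $F$ or $G$. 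This uses $\underline{e}_{11}v_1\Phi = -(a\Phi_1 + b\Phi_2)\underline{e}_1$ together with the $Q$-orthogonality $\langle (a,b)^\top,\Phi\rangle = 0$ inherited from $\Phi \in S_1(L^2\times L^2) \subseteq Q(L^2 \times L^2)$, and the analogous calculation on the right.

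Next, I would identify the $z$-independent piece of $F$ by invoking Lemma~\ref{lemma: threshold resonance characterization}(1), which gives $\calG_0[v_1\Phi](x) = c_0\underline{e}_1 - \Psi(x)$. For $G$, using the symmetry of the kernel $\calG_0$ as a matrix and Remark~\ref{remark: psi_2}'s identity $\sigma_3\Psi = \calG_0[v_2\Phi] + c_0\underline{e}_1$, one obtains $[\sigma_3\Psi(y) - c_0\underline{e}_1]^*$. Combined with Lemma~\ref{lemma: stationary phase fresnel} in the form $\int e^{itz^2}\chi_0(z^2)\,\ud z = \frac{\sqrt{\pi}}{\sqrt{-it}} + O(|t|^{-3/2})$, this leading $z^0\cdot z^0$ product recovers precisely $F_t^1(x,y)$ defined in \eqref{eqn: def F_t1}.

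To handle the corrections cleanly, I would employ the fundamental-theorem-of-calculus device from \eqref{eqn: Q orthogonality trick}--\eqref{eqn: FTC for e^izr} on the most singular block $\calR_1\cdot\calR_1$ (the $(1,1)\cdot(1,1)$ block), using the double $Q$-orthogonality available on both sides of $S_1$ to rewrite that contribution to $I_{2,1}$ as
\[
\frac{-\eta}{4}\int\!\int\!\int_{|x|}^{|x-x_1|}\!\!\int_{|y|}^{|y-y_1|} G_t(s_1+s_2)\,(a\Phi_1+b\Phi_2)(x_1)\,(a\Phi_1+b\Phi_2)(y_1)\,\ud s_1\,\ud s_2\,\ud x_1\,\ud y_1\,\underline{e}_{11}.
\]
Then the product form \eqref{eqn: G_t(r) error2} of Lemma~\ref{lemma: stationary phase fresnel} factorizes the leading piece into an $x$-factor and a $y$-factor; replacing $e^{-is_j^2/(4t)}\to 1$ (with the error controlled by $|e^{-is^2/(4t)}-1|\lesssim s^2/|t|$) reconstructs $\frac{1}{2}\int |x-x_1|(a\Phi_1+b\Phi_2)\,\ud x_1 = c_0 - \Psi_1(x)$, yielding the $(1,1)$-entry of $F_t^1$. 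The less singular blocks $(1,1)\cdot(2,2)$, $(2,2)\cdot(1,1)$, and $(2,2)\cdot(2,2)$ are handled analogously but more easily, directly expanding the smooth $\calR_2(z)$ in $z^2$.

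The main obstacle I anticipate is ensuring the polynomial weight in the remainder stays at $\langle x\rangle^2\langle y\rangle^2$ rather than the naive $\langle x\rangle^3\langle y\rangle^3$ suggested by the remainder bound $|E(z)(x,y)|\lesssim |z|^2\langle x\rangle^3\langle y\rangle^3$ in Lemma~\ref{lemma: expansion of R_0}. This is addressed by two mechanisms: (i) the per-variable splitting $\langle s_1\rangle\langle s_2\rangle$ in the error of \eqref{eqn: G_t(r) error2} (rather than $\langle s_1+s_2\rangle$), which upon integrating $s_j$ over $[|x|,|x-x_j|]$ and $|x_j|$ against the decay of $v_1\Phi$ yields at most one factor of $\langle x\rangle$ and one of $\langle y\rangle$; and (ii) the refined orthogonality identities $\int (a\Phi_1+b\Phi_2)\,\ud x_1 = 0$ and $\int x_1(a\Phi_1+b\Phi_2)\,\ud x_1 = 2c_1$, which trim the polynomial growth of the correction $\int (|x-x_1|^3 - |x|^3)(a\Phi_1+b\Phi_2)\,\ud x_1$ entering the $|t|^{-1}$-error from $e^{-is_1^2/(4t)} - 1$, keeping it at $\langle x\rangle^2$. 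A parity observation—$\chi_0(z^2)$ and $e^{itz^2}$ are even in $z$—eliminates the pure $z$-odd cross contributions, so that only the $z^2$-even corrections need to be tracked; these gain an extra $|t|^{-1}$ via $\int e^{itz^2}z^2\chi_0(z^2)\,\ud z = O(|t|^{-3/2})$ through integration by parts, matching the claimed $|t|^{-3/2}$ decay rate.
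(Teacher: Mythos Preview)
Your approach is essentially the paper's: split $I_{2,1}$ into the four blocks $I_{2,1}^{(i,j)}$ according to $\calR_0=\calR_1+\calR_2$, use the $S_1$-orthogonality \eqref{eqn: S_1 orthogonality trick} together with the fundamental-theorem-of-calculus device \eqref{eqn: FTC for e^izr} on the $(1,1)$-block, apply Lemma~\ref{lemma: stationary phase fresnel} to extract the leading piece, Taylor-expand the $(2,2)$-entries of $\calR_0$ on the remaining blocks, and reassemble via $\calG_0[v_1\Phi]=c_0\underline{e}_1-\Psi$ and the identity in Remark~\ref{remark: psi_2}. Two small corrections to your write-up: (a) the parity observation is not needed and does not apply as stated, since the $(1,1)$-entry $\frac{ie^{iz|x-y|}}{2z}$ of $\calR_0(z)$ is neither even nor odd in $z$; the paper simply relies on Lemma~\ref{lemma: stationary phase fresnel} and the extra factor of $z$ gained from the FTC trick. (b) The second moment identity $\int x_1(a\Phi_1+b\Phi_2)\,\ud x_1=2c_1$ is not required for the weight count: the elementary bound $\bigl||x-x_1|^3-|x|^3\bigr|\lesssim \langle x\rangle^2\langle x_1\rangle^3$ already yields the $\langle x\rangle^2$ weight, with $\langle x_1\rangle^3$ absorbed by the decay of $v_1\Phi$ (this is exactly the paper's estimate $|H_t(x_1,x)-(|x-x_1|-|x|)|\lesssim |t|^{-1}\langle x\rangle^2\langle x_1\rangle^3$).
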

\begin{proof} As in the previous propositions, we decompose $I_{2,1}$ into the sum
\begin{equation*}
	I_{2,1} = I_{2,1}^{(1,1)}+I_{2,1}^{(1,2)}+I_{2,1}^{(2,1)}+I_{2,1}^{(2,2)},
\end{equation*}
with 
\begin{equation*}
	I_{2,1}^{(i,j)} := \int_\R e^{itz^2} \chi_0(z^2) [\calR_i(z)v_1 S_1 v_2\calR_j(z)] \,\ud z, \quad i,j \in \{1,2\}.
\end{equation*}
We start with the most singular term 
\begin{equation*}
	I_{2,1}^{(1,1)}(x,y) = \int_{\R^3} e^{itz^2 + iz(\vert x - x_1 \vert + \vert y - y_1 \vert)} \frac{\chi_0(z^2)}{(2iz)^2}  [\underline{e}_{11}v_1S_1v_2\underline{e}_{11}](x_1,y_1) \,\ud x_1 \,\ud y_1 \,\ud z.
\end{equation*}
Noting that $S_1L^2 \subset QL^2$, the orthogonality conditions \eqref{eqn: orthogonality condition} imply that 
\begin{equation}\label{eqn: S_1 orthogonality trick}
	\int_\R e^{iz \vert x \vert} \underline{e}_{11} v_1(x_1)S_1(x_1,x_2)\,\ud x_1 = 	\int_\R e^{iz \vert y \vert} S_1(y_2,y_1)v_2(y_1)\underline{e}_{11}\,\ud y_1 = \mathbf{0}_{2\times 2}, \quad \forall x, y \in \R.
\end{equation}
Hence, by the Fubini theorem,
\begin{equation*}
	\begin{split}
		I_{2,1}^{(1,1)} (x,y)
		&= \frac{1}{4} \int_{\R^2} \int_{\vert x \vert}^{\vert x - x_1 \vert}\int_{\vert y \vert}^{\vert y - y_1 \vert} \int_{\R} e^{itz^2 + iz(s_1 + s_2)} \chi_0(z^2) [\underline{e}_{11}v_1S_1v_2\underline{e}_{11}](x_1,y_1) \,\ud z \,\ud s_1 \,\ud s_2 \,\ud x_1 \,\ud y_1\\
		&= \frac{1}{4}  \int_{\vert x \vert}^{\vert x - x_1 \vert}\int_{\vert y \vert}^{\vert y - y_1 \vert} G_t(s_1+s_2) \,\ud s_1 \,\ud s_2 \int_{\R^2} [\underline{e}_{11}v_1S_1v_2\underline{e}_{11}](x_1,y_1)  \,\ud x_1 \,\ud y_1,
	\end{split}
\end{equation*}
where $G_t(\cdot)$ is the function defined in Lemma \ref{lemma: stationary phase fresnel}, which satisfies the estimate
\begin{equation}\label{eqn: G_t(s_1+s_2)}
	\left \vert G_t(s_1+s_2) - \frac{\sqrt{\pi}}{{\sqrt{-it}}}  e^{-i\frac{s_1^2}{4t}}e^{-i\frac{s_2^2}{4t}} \right\vert \leq C \vert t \vert^{-\frac{3}{2}} \langle s_1 \rangle \langle s_2 \rangle.
\end{equation}
Using the bound
\begin{equation}\label{eqn: s_1 s_2 angle}
	\int_{\vert x \vert}^{\vert x - x_1 \vert}\int_{\vert y \vert}^{\vert y - y_1 \vert}  \langle s_1 \rangle \langle s_2 \rangle \,\ud s_1 \,\ud s_2 \lesssim \langle x_1\rangle^2\langle y_1\rangle^2\langle x \rangle \langle y \rangle, 
\end{equation}
the decay assumptions on $v_1,v_2$, and the estimate \eqref{eqn: G_t(s_1+s_2)}, we have
\begin{equation*}
	\begin{split}
		&\left \vert I_{2,1}^{(1,1)}(x,y) -  \frac{\sqrt{\pi}}{{4\sqrt{-it}}}  e^{i \frac{\pi}{4}} \int_{\R^2}H_t(x_1,x)[\underline{e}_{11}v_1S_1v_2\underline{e}_{11}](x_1,y_1)H_t(y_1,y)\,\ud x_1 \,\ud y_1\right \vert\\
		&\quad \leq C \vert t \vert^{-\frac{3}{2}}\langle x \rangle \langle y \rangle \Vert S_1 \Vert_{L^2 \times L^2 \to L^2 \times L^2 } \Vert \langle x_1 \rangle^2 v_1(x_1) \Vert_{L^2} \Vert \langle y_1 \rangle^2 v_2(y_2) \Vert_{L^2} \leq C\vert t \vert^{-\frac{3}{2}}\langle x \rangle \langle y \rangle,
	\end{split}
\end{equation*}
where we set
\begin{equation}
	H_t(x_1,x) := \int_{\vert x \vert}^{\vert x_1 - x \vert} e^{-i\frac{s^2}{4t}} \,\ud s.
\end{equation}
Since $S_1(x,y) = \eta\Phi(x)\Phi^*(y)$, the orthogonality conditions \eqref{eqn: orthogonality condition} imply that
\begin{equation*}
	\begin{split}
		\int_{\R} \vert x \vert \underline{e}_{11} v_1(x_1) S_1(x_1,y_1) \, \ud x_1 &= \eta \int_\R \vert x \vert \underline{e}_{11}v_1(x_1)\Phi(x_1) \,\ud x_1\Phi^*(y_1) = \bm{0}_{2\times 2}, \quad \forall y \in \R, \\
		\int_{\R} \vert y \vert  S_1(x_1,y_1) v_2(y_1)\underline{e}_{11} \, \ud y_1&= 	\eta \Phi(x_1)\int_\R \vert y \vert \Phi^*(y_1)v_2(y_1)\underline{e}_{11} \,\ud y_1 = \bm{0}_{2 \times 2}, \quad \forall x \in \R.
	\end{split}
\end{equation*}
Hence, using the bound
\begin{equation}
	\begin{split}
		\vert H_t(x_1,x) - (\vert x - x_1 \vert - \vert x \vert )\vert \leq C \vert t \vert^{-1}\langle x \rangle^2 \langle x_1 \rangle^3,
	\end{split}
\end{equation}
and the exponential decay of $v_1,v_2$, we conclude the estimate 
\begin{equation}\label{eqn: I_2^{(1,1)}}
	\begin{split}
		\left \vert I_{2,1}^{(1,1)}(x,y) - \frac{\eta\sqrt{\pi}}{{\sqrt{-it}}}  [G_0(\underline{e}_{11}v_1\Phi)(x)][G_0(\underline{e}_{11}v_2\Phi)(y)]^* \right \vert \leq C\vert t \vert^{-\frac{3}{2}}\langle x \rangle^2 \langle y \rangle^2,
	\end{split}
\end{equation}
where \begin{equation}\label{eqn: def of G_0}
	G_0(x,y) := -\frac{1}{2}\vert x - y\vert,
\end{equation} 
and
\begin{equation*}
	\begin{split}
		&[G_0(\underline{e}_{11}v_1\Phi)(x)] := -\frac{1}{2}\int_{\R} \vert x - x_1 \vert \underline{e}_{11}v_1(x_1)\Phi(x_1)\,\ud x_1,\\
		&[G_0(\underline{e}_{11}v_2\Phi)(y)]^* :=  -\frac{1}{2}\int_{\R} \vert y - y_1 \vert \Phi^*(y_1)v_2(y_1) \underline{e}_{11} \,\ud y_1.
	\end{split}
\end{equation*}
In the preceding definition, we used the identity $v_2^* = v_2$. Next, we treat the term 
\begin{equation*}
	I_{2,1}^{(2,2)}(x,y) = \int_{\R^3} e^{itz^2}\chi_0(z^2)\frac{e^{-\sqrt{z^2+2\mu}\vert x - x_1\vert }}{-2\sqrt{z^2+2\mu}} [\underline{e}_{22}v_1S_1v_2\underline{e}_{22}](x_1,y_1)\frac{e^{-\sqrt{z^2+2\mu}\vert y - y_1\vert }}{-2\sqrt{z^2+2\mu}}\,\ud x_1 \,\ud y_1 \,\ud z.
\end{equation*}
By Taylor expansion, we have 
\begin{equation}\label{eqn: I_{2,1}{2,2}}
	\begin{split}
		I_{2,1}^{(2,2)}(x,y) &=\int_{\R^3} e^{itz^2}\chi_0(z^2)\frac{e^{-\sqrt{2\mu}\vert x - x_1\vert }}{-2\sqrt{2\mu}} [\underline{e}_{22}v_1S_1v_2\underline{e}_{22}](x_1,y_1)\frac{e^{-\sqrt{2\mu}\vert y - y_1\vert }}{-2\sqrt{2\mu}}\,\ud x_1 \,\ud y_1 \,\ud z \\
		&\quad + \int_{\R^3} e^{itz^2}z^2\chi_0(z^2)[\underline{e}_{22}v_1S_1v_2\underline{e}_{22}](x_1,y_1)\kappa(x,x_1)\kappa(y,y_1)\,\ud x_1 \,\ud y_1 \,\ud z \\
		&= \eta \int_\R e^{itz^2}\chi_0(z^2) \,\ud z [G_2(\underline{e}_{22}v_1\Phi)(x)][G_2(\underline{e}_{22}v_2\Phi)(y)]^*\\
		&\quad  + \int_{\R^3} e^{itz^2}z^2\chi_0(z^2)[\underline{e}_{22}v_1S_1v_2\underline{e}_{22}](x_1,y_1)\kappa(x,x_1)\kappa(y,y_1)\,\ud x_1 \,\ud y_1\,\ud z,
	\end{split}
\end{equation}
where we set
\begin{equation}\label{eqn: G_2(x,y)}
	G_2(x,y) := \frac{e^{-\sqrt{2\mu}\vert x - y\vert }}{-2\sqrt{2\mu}},
\end{equation}
and where $\kappa(x,x_1)\kappa(y,y_1)$ is an error term bounded by $C\langle x \rangle \langle x_1 \rangle \langle y \rangle \langle y_1 \rangle e^{-c(\vert x - x_1 \vert + \vert y - y_1 \vert)}$, for some $C,c >0$, (c.f. \eqref{eqn: taylor expand r_2}). The definitions for $G_2(\underline{e}_{22}v_1\Phi)(x)$ and $G_2(\underline{e}_{22}v_2\Phi)(y)$ are defined analogously to the ones for $G_0(\underline{e}_{11}v_1\Phi)(x)$ and $G_0(\underline{e}_{11}v_2\Phi)(y)$. By non-stationary phase, one has the uniform estimate
\begin{equation}
	\left \vert \int_\R e^{itz^2}z^2\chi_0(z^2) \,\ud z \right \vert \leq C\vert t \vert^{-\frac{3}{2}}.
\end{equation}
Hence, we can control the remainder term in $I_{2,1}^{(2,2)}$ by 
\begin{equation}\label{eqn: bound on error I_{2,1}^{2,2}}
	\left \vert \int_{\R^3} e^{itz^2}z^2\chi_0(z^2)[\underline{e}_{22}v_1S_1v_2\underline{e}_{22}](x_1,y_1)\kappa(x,x_1)\kappa(y,y_1)\,\ud x_1 \,\ud y_1 \,\ud z \right \vert \leq C \vert t\vert^{-\frac{3}{2}}\langle x \rangle \langle y \rangle.
\end{equation}
On the other hand, by Lemma~\ref{lemma: stationary phase fresnel}, one has
\begin{equation*}
	\int_{\R} e^{itz^2}\chi_0(z^2)\,\ud z = \frac{\sqrt{\pi}}{{\sqrt{-it}}}  + R_t, \quad \vert R_t \vert \leq C\vert t \vert^{-\frac{3}{2}}.
\end{equation*}
Hence, the leading contribution of $I_{2,1}^{(2,2)}$ can be written as 
\begin{equation*}
	\left \vert \int_\R e^{itz^2}\chi_0(z^2) \,\ud z [G_2(\underline{e}_{22}v_1\Phi)(x)][G_2(\underline{e}_{22}v_2\Phi)(y)]^* - \frac{\eta\sqrt{\pi}}{{\sqrt{-it}}} [G_2(\underline{e}_{22}v_1\Phi)(x)][G_2(\underline{e}_{22}v_2\Phi)(y)]^*\right \vert \leq C\vert t \vert^{-\frac{3}{2}}.
\end{equation*}
Thus, one concludes the estimate for  $I_{2,1}^{(2,2)}$:
\begin{equation}\label{eqn: I_2^{(2,2)}}
	\left \vert I_{2,1}^{(2,2)} - \frac{\eta\sqrt{\pi}}{{\sqrt{-it}} } [G_2(\underline{e}_{22}v_1\Phi)(x)][G_2(\underline{e}_{22}v_2\Phi)(y)]^* \right \vert \leq C \vert t \vert^{-\frac{3}{2}}\langle x \rangle \langle y \rangle.
\end{equation}
Finally, we note that a similar analysis holds for the terms $I_{2,1}^{(1,2)}$ and $I_{2,1}^{(2,1)}$ yielding the contributions
\begin{equation}\label{eqn: I_2^{(1,2)} and I_{2,1}^{(1,2)}}
	\begin{split}
		&\left \vert 	I_{2,1}^{(1,2)} - \frac{\eta\sqrt{\pi}}{{\sqrt{-it}} }[G_0(\underline{e}_{11}v_1\Phi)(x)][G_2(\underline{e}_{22}v_2\Phi)(y)]^* \right \vert \leq C\vert t \vert^{-\frac{3}{2}}\langle x \rangle^2 \langle y \rangle,\\
		&\left \vert I_{2,1}^{(2,1)} -\frac{\eta\sqrt{\pi}}{{\sqrt{-it}} }[G_2(\underline{e}_{22}v_1\Phi)(x)][G_0(\underline{e}_{11}v_2\Phi)(y)]^*\right \vert \leq C\vert t \vert^{-\frac{3}{2}}\langle x \rangle \langle y \rangle^2.
	\end{split}
\end{equation}
By adding all leading order contributions, we obtain
\begin{equation*}
	F_{t}^1(x,y) =\frac{\eta \sqrt{\pi}}{{\sqrt{-it}} }[(G_0\underline{e}_{11} + G_2\underline{e}_{22})v_1\Phi](x)[(G_0\underline{e}_{11} + G_2\underline{e}_{22})v_2\Phi]^*(y).
\end{equation*}
Recalling that $\calG_0 = G_0 \underline{e}_{11} + G_2\underline{e}_{22}$ from Lemma \ref{eqn: expansion of R_0}, that $\calG_0(v_1 \Phi ) = c_0\underline{e}_1 - \Psi$ from Lemma \ref{lemma: threshold resonance characterization}, and that $\calG_0(v_2 \Phi) = \sigma_3 \Psi - c_0 \underline{e}_1$ from Remark \ref{remark: psi_2} (c.f.~\eqref{eqn: sigma_3Psi}), we arrive at  
\begin{equation*}
	F_t^1(x,y) = \frac{\eta\sqrt{\pi}}{{\sqrt{-it}} }[c_0 \underline{e}_1 - \Psi(x)][\sigma_3 \Psi(y) - {c_0} \underline{e}_1]^*,
\end{equation*}
as claimed
\end{proof}
We continue the analysis for the terms involving the operators $S_1TP$ and $PTS_1$.
\begin{proposition}\label{prop: I_{2,3}}
For all $\vert t \vert \geq 1$, we have
\begin{equation}\label{eqn: I_{2,2}}
	\vert I_{2,2}(x,y) - F_t^{2}(x,y) \vert \leq C \vert t\vert^{-\frac{3}{2}}\langle x \rangle^2 \langle y \rangle^2,
\end{equation}
\begin{equation}\label{eqn: I_{2,3}}
	\vert I_{2,3}(x,y) - F_t^{3}(x,y) \vert \leq C \vert t\vert^{-\frac{3}{2}}\langle x \rangle^2 \langle y \rangle^2,
\end{equation}
where
\begin{equation}\label{eqn: F_t^2}
	F_{t}^{2}(x,y) := \frac{i\eta\Vert V_1 \Vert_{L^1(\R)}}{2}\frac{\sqrt{\pi}}{{\sqrt{-it}}}[c_0 \underline{e}_1 - \Psi(x)][e^{i\frac{y^2}{4t}} {c_0}\underline{e}_1]^*,
\end{equation}
\begin{equation}\label{eqn: F_t^3}
	F_{t}^{3}(x,y) := -\frac{i\eta\Vert V_1 \Vert_{L^1(\R)}}{2}\frac{\sqrt{\pi}}{{\sqrt{-it}}} [e^{-i\frac{x^2}{4t}}c_0 \underline{e}_{1}][\sigma_3 \Psi(y) -  {c_0} \underline{e}_1]^*.
\end{equation}
	
\end{proposition}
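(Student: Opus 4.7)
The plan is to exploit the rank-one structure of $S_1 T P$ established in \lem{lemma: threshold resonance characterization} and reduce the analysis of $I_{2,2}$ to a scalar stationary phase argument. Since $S_1(x_1,y_1) = \eta\,\Phi(x_1)\Phi^*(y_1)$ with $\eta = \Vert \Phi \Vert_{L^2(\R)\times L^2(\R)}^{-2}$, and $\langle \Phi, T(a,b)^\top\rangle = \overline{c_0}\Vert V_1 \Vert_{L^1(\R)}$ by self-adjointness of $T$ and the definition of $c_0$, one computes $[S_1 T P](x_1,y_1) = \eta\,\overline{c_0}\,\Phi(x_1)(a(y_1), b(y_1))$. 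Combined with the elementary identity $(a,b)\,v_2 = (V_1, V_2)$, this yields the rank-one factorization
\begin{equation*}
	[\calR_0(z) v_1 S_1 T P v_2 \calR_0(z)](x,y) = \eta\,\overline{c_0}\,A(z,x)\,B(z,y),
\end{equation*}
where $A(z,x) := [\calR_0(z) v_1 \Phi](x)$ is a column 2-vector and $B(z,y) := \int_\R (V_1,V_2)(y_1)\,\calR_0(z)(y_1,y)\,dy_1$ is a row 2-vector. Plugging into the definition of $I_{2,2}$ and swapping the $(x_1,y_1)$-integrations with the $z$-integral reduces the task to analyzing $\int_\R e^{itz^2}z\,\chi_0(z^2)A(z,x)B(z,y)\,dz$ via expansions of each factor near $z=0$.

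For $A(z,x)$, the Laurent expansion of $\calR_0(z)$ from \lem{lemma: expansion of R_0} produces a singular $\frac{i}{2z}\underline{e}_{11}$ contribution that vanishes upon integration against $v_1\Phi$, since $\int (v_1\Phi)_1\,dx_1 = -\langle (a,b)^\top,\Phi\rangle = 0$ (a consequence of $\Phi \in Q(L^2(\R)\times L^2(\R))$). Hence $A(z,x) = \calG_0(v_1\Phi)(x) + \calO(z\langle x\rangle^2)$, which by \lem{lemma: threshold resonance characterization} equals $(c_0\underline{e}_1 - \Psi(x)) + \calO(z\langle x\rangle^2)$. For $B(z,y)$ the analogous orthogonality fails since $\int V_1\,dy_1 = \Vert V_1 \Vert_{L^1(\R)} \neq 0$, so the first entry of $B(z,y)$ carries a $\frac{1}{z}$-singularity; however, the outer $z$ in $I_{2,2}$ precisely cancels this singularity, yielding $zB(z,y) = \tfrac{i}{2}\bigl(\int V_1(y_1)e^{iz|y-y_1|}\,dy_1\bigr)\underline{e}_1^\top + \widetilde{B}(z,y)\underline{e}_2^\top$, where $\widetilde{B}(z,y)$ vanishes to first order at $z=0$ and the crucial $y$-dependence is carried by the oscillatory factor $e^{iz|y-y_1|}$.

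Substituting these expansions into the $z$-integral and applying \lem{lemma: stationary phase fresnel} to the scalar oscillatory integral $\int e^{itz^2+iz|y-y_1|}\chi_0(z^2)\,dz = G_t(|y-y_1|)$ yields the leading Fresnel factor $\tfrac{\sqrt{\pi}}{\sqrt{-it}}e^{-i|y-y_1|^2/4t}$. Approximating $e^{-i|y-y_1|^2/4t} \approx e^{-iy^2/4t}$ with error $\calO(|t|^{-1}\langle y\rangle\langle y_1\rangle)$ and integrating against $V_1$ recovers the constant $\Vert V_1 \Vert_{L^1(\R)}$; assembling all coefficients reproduces exactly the kernel $F_t^2(x,y)$ as defined in \eqref{eqn: F_t^2}. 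The error $\calO(|t|^{-3/2}\langle x\rangle^2 \langle y\rangle^2)$ is collected from three standard sources: (i) the $\calO(z\langle x\rangle^2)$ remainder in $A$ gains an extra power of $z$ and contributes $|t|^{-3/2}\langle x\rangle^2$ via the non-stationary phase bound \eqref{eqn: VDC1} of \lem{lemma: van der corput estimate}; (ii) the $\calO(|t|^{-3/2}\langle y-y_1\rangle)$ remainder in $G_t$ from \lem{lemma: stationary phase fresnel}; and (iii) the Fresnel linearization error noted above. The exponential decay of $v_1, v_2$ absorbs the accumulated weights $\langle x_1\rangle^k, \langle y_1\rangle^k$ throughout.

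The estimate \eqref{eqn: I_{2,3}} follows by the mirror strategy. Since $PTS_1 = (S_1TP)^*$ has the rank-one kernel $\eta\,c_0\,(a(x_1),b(x_1))^\top\Phi^*(y_1)$, the roles of $x$ and $y$ are exchanged: the orthogonality is now applied on the right via $\Phi^* \in Q(L^2(\R)\times L^2(\R))$, and the identity $[\calG_0(v_2\Phi)](y) = \sigma_3\Psi(y) - c_0\underline{e}_1$ from \rem{remark: psi_2} produces the factor $[\sigma_3\Psi(y) - c_0\underline{e}_1]^*$ on the right, while the Fresnel phase $e^{-ix^2/4t}c_0\underline{e}_1$ emerges on the left. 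The overall sign flips to $-\tfrac{i\eta\Vert V_1\Vert_{L^1}}{2}\tfrac{\sqrt{\pi}}{\sqrt{-it}}$ because the singular $\frac{1}{z}$ and the outer $z$ now appear on opposite sides, yielding exactly $F_t^3$ as in \eqref{eqn: F_t^3}. The main technical obstacle throughout is the careful bookkeeping of constants, particularly the interplay of $c_0$ and $\overline{c_0}$ arising from the sesquilinear inner product, and ensuring the error bounds propagate uniformly with the claimed weights $\langle x\rangle^2\langle y\rangle^2$.
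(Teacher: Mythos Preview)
Your approach is correct and essentially the same as the paper's. Both arguments hinge on the rank-one identity $[S_1TP](x_1,y_1) = \eta\,\overline{c_0}\,\Phi(x_1)(a(y_1),b(y_1))$, the orthogonality $\Phi \in Q(L^2\times L^2)$ to kill the $z^{-1}$-singularity on the left, the formula $\calG_0(v_1\Phi) = c_0\underline{e}_1 - \Psi$, and Lemma~\ref{lemma: stationary phase fresnel} for the Fresnel integral on the right. The only organizational difference is that the paper first splits $\calR_0 = \calR_1 + \calR_2$ into the four blocks $I_{2,2}^{(i,j)}$ and treats each separately, whereas you factor out the rank-one structure globally and expand $A(z,x)$ and $zB(z,y)$ jointly; both routes produce the same leading kernel and the same error terms.

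Two small imprecisions in your write-up: your citation of \eqref{eqn: VDC1} for extracting $|t|^{-3/2}$ from an extra factor of $z$ is not quite right, since \eqref{eqn: VDC1} only yields $|t|^{-1/2}$; what you actually need is one integration by parts in $z$ (writing $ze^{itz^2} = (2it)^{-1}\partial_z e^{itz^2}$) followed by \eqref{eqn: VDC1}, which picks up the weight $\langle y - y_1\rangle \lesssim \langle y\rangle\langle y_1\rangle$ that you omitted from error source (i). Similarly, the Fresnel linearization error $|e^{-i(y-y_1)^2/4t} - e^{-iy^2/4t}| \lesssim |t|^{-1}|2yy_1 - y_1^2|$ is controlled by $|t|^{-1}\langle y\rangle\langle y_1\rangle^2$ rather than $\langle y\rangle\langle y_1\rangle$; the extra $\langle y_1\rangle$ is harmlessly absorbed by the exponential decay of $V_1$.
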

\begin{proof}
As in the proof of Proposition~\ref{prop: I_2 estimate}, we decompose $I_{2,2}$ into
\begin{equation*}
	I_{2,2} = I_{2,2}^{(1,1)} + I_{2,2}^{(1,2)} + I_{2,2}^{(2,1)} + I_{2,2}^{(2,2)},
\end{equation*}
with
\begin{equation*}
	I_{2,2}^{(i,j)} := \int_{\R} e^{itz^2} z \chi_0(z^2) [\calR_i(z) v_1 S_1TP v_2 \calR_{j}(z)] \,\ud z, \quad i,j \in \{1,2\},
\end{equation*}
where $\calR_1$ and $\calR_{2}$ were defined in \eqref{eqn: def calR_1 calR_2}. We start with 
\begin{equation*}
	I_{2,2}^{(1,1)}(x,y) = \int_{\R^3} e^{itz^2}z\chi_0(z^2) \frac{e^{iz \vert x - x_1 \vert}}{-2iz}[\underline{e}_{11}v_1S_1TPv_2\underline{e}_{11}](x_1,y_1)\frac{e^{iz \vert y - y_1 \vert}}{-2iz}  \,\ud x_1 \,\ud y_1 \,\ud z.
\end{equation*}
Using the orthogonality \eqref{eqn: S_1 orthogonality trick}, we have
\begin{equation*}
	\begin{split}
	I_{2,2}^{(1,1)}(x,y) &= \frac{1}{4} \int_{\R^3}  \int_{\vert x \vert}^{\vert x - x_1 \vert}\int_{\vert y \vert}^{\vert y - y_1 \vert} e^{itz^2+iz(s_1+s_2)}z\chi_0(z^2) [\underline{e}_{11}v_1S_1TPv_2\underline{e}_{11}](x_1,y_1) \,\ud s_1 \,\ud s_2 \,\ud x_1 \,\ud y_1 \,\ud z\\
	&\quad + \frac{1}{4i} \int_{\R^3}  \int_{\vert x \vert}^{\vert x - x_1 \vert}e^{itz^2+izs_1}\chi_0(z^2) [\underline{e}_{11}v_1S_1TPv_2\underline{e}_{11}](x_1,y_1)e^{iz \vert y \vert}\,\ud s_1 \,\ud x_1 \,\ud y_1 \,\ud z\\
	&=: I_{2,2;1}^{(1,1)} + I_{2,2;2}^{(1,1)}.
	\end{split}
\end{equation*}
By Lemma~\ref{lemma: van der corput estimate}, we have 
\begin{equation*}
	\left \vert \int_{\R}e^{itz^2+iz(s_1+s_2)}z\chi_0(z^2) \,\ud z \right \vert \leq C\vert t \vert^{-\frac{3}{2}}\langle s_1 \rangle \langle s_2 \rangle.
\end{equation*}
Using this estimate, the bound 
\begin{equation*}
	\int_{\vert x \vert}^{\vert x - x_1 \vert}\int_{\vert y \vert}^{\vert y - y_1 \vert} \langle s_1 \rangle \langle s_2 \rangle \,\ud s_1 \,\ud s_2 \lesssim \langle x_1 \rangle^2 \langle y_2 \rangle^2\langle x \rangle \langle y \rangle,
\end{equation*}
the absolute boundedness of $S_1TP$, and the exponential decay of $v_1,v_2$, we deduce that 
\begin{equation}
\begin{split}
	\left \vert I_{2,2;1}^{(1,1)}(x,y) \right \vert \lesssim \vert t \vert^{-\frac{3}{2}}\langle x \rangle \langle y \rangle \int_{\R^2}\vert \langle x_1 \rangle^2 \langle y_2 \rangle^2 [\underline{e}_{11}v_1S_1TPv_2\underline{e}_{11}](x_1,y_1)\vert \,\ud x_1 \,\ud y_1 \lesssim \vert t \vert^{-\frac{3}{2}}\langle x \rangle \langle y \rangle.
\end{split}
\end{equation}
By Lemma~\ref{lemma: threshold resonance characterization} and direct computation,
\begin{equation}\label{eqn: S_1TPv_2}
	\int_{\R} S_1TP(x_1,y_1)v_2(y_1)\underline{e}_{11} \,\ud y_1 = \eta \Vert V_1 \Vert_{L^1(\R)}\Phi(x_1)[c_0\underline{e}_1]^*.
\end{equation}
Hence, integrating in $y_1$, we have
\begin{equation*}
	\begin{split}
		I_{2,2;2}^{(1,1)}(x,y)  &= \frac{\eta \Vert V_1 \Vert_{L^1(\R)}}{4i} \left(\int_{\R}\int_{\vert x \vert}^{\vert x - x_1 \vert} \int_{\R} e^{itz^2+iz(s_1 + \vert y \vert)} \chi_0(z^2)  \underline{e}_{11} v_1(x_1)\Phi(x_1) \,\ud z \,\ud s_1 \,\ud x_1\right)[c_0\underline{e}_1]^*\\
		&= \frac{\eta \Vert V_1 \Vert_{L^1(\R)}}{4i}\left(\int_{\R}\int_{\vert x \vert}^{\vert x - x_1 \vert} G_t(s_1 + \vert y \vert)\,\ud s_1 \underline{e}_{11} v_1(x_1)\Phi(x_1)\,\ud x_1\right)[c_0\underline{e}_1]^*,
	\end{split}
\end{equation*}
where $G_t$ is the function defined in Lemma~\ref{lemma: stationary phase fresnel}. By Lemma~\ref{lemma: stationary phase fresnel} (c.f.~\eqref{eqn: G_t(s_1+s_2)}--\eqref{eqn: I_2^{(1,1)}} for similar computations), we have
\begin{equation*}
	\left \vert I_{2,2;2}^{(1,1)}(x,y)  - \frac{i \eta \Vert V_1 \Vert_{L^1(\R)}}{2}    [G_0(\underline{e}_{11}v_1\Phi)(x)][e^{i\frac{y^2}{4t}} {c_0}\underline{e}_1]^* \right \vert \leq C \vert t \vert^{-\frac{3}{2}}\langle x \rangle^2 \langle y \rangle^2,
\end{equation*}
where $G_0$ is the operator defined in \eqref{eqn: def of G_0}. This completes the analysis of the term $I_{2,2}^{(1,1)}$. Next, we treat the term
\begin{equation}
	\begin{split}
		I_{2,2}^{(2,1)}(x,y) = \int_{\R^3} e^{itz^2}z \chi_0(z^2) \frac{e^{-\sqrt{z^2+2\mu}\vert x - x_1 \vert}}{-2\sqrt{z^2+2\mu}}[\underline{e}_{22}v_1S_1TPv_2\underline{e}_{11}](x_1,y_1)\frac{e^{iz \vert y - y_1 \vert}}{-2iz} \,\ud x_1 \,\ud y_1\,\ud z.
	\end{split}
\end{equation}
By inserting $e^{iz \vert y \vert}$, we write
\begin{equation*}
	\begin{split}
		I_{2,2}^{(2,1)}(x,y) &= -\frac{1}{2}\int_{\R^3} e^{itz^2}z \chi_0(z^2) \frac{e^{-\sqrt{z^2+2\mu}\vert x - x_1 \vert}}{-2\sqrt{z^2+2\mu}}[\underline{e}_{22}v_1S_1TPv_2\underline{e}_{11}](x_1,y_1) \int_{\vert y \vert}^{\vert y - y_1 \vert}e^{izs_2} \,\ud s_2 \,\ud x_1 \,\ud y_1\,\ud z\\
		&\quad + \int_{\R^3} e^{itz^2}z \chi_0(z^2) \frac{e^{-\sqrt{z^2+2\mu}\vert x - x_1 \vert}}{-2\sqrt{z^2+2\mu}}[\underline{e}_{22}v_1S_1TPv_2\underline{e}_{11}](x_1,y_1)\frac{e^{iz \vert y \vert}}{-2iz} \,\ud x_1 \,\ud y_1\,\ud z\\
		&=: I_{2,2;1}^{(2,1)}(x,y) + I_{2,2;2}^{(2,1)}(x,y),
	\end{split}
\end{equation*}
where $I_{2,2;2}^{(2,1)}$ is the leading term. By Lemma~\ref{lemma: van der corput estimate} and Lemma~\ref{lemma: bound on 2,2 resolvent},
\begin{equation}\label{eqn: R_0R_2 terms}
\left \vert 	\int_{\R} e^{itz^2+izs_2} z \chi_0(z^2) \frac{e^{-\sqrt{z^2+2\mu}\vert x - x_1 \vert}}{-2\sqrt{z^2+2\mu}}\,\ud z \right \vert \leq C \vert t \vert^{-\frac{3}{2}}\langle s_2 \rangle.
\end{equation}
Hence, using the absolute boundedness of $S_1TP$ and the bound \eqref{eqn: s_1 s_2 angle}, we have
\begin{equation*}
	\begin{split}
		\left \vert I_{2,2;1}^{(2,1)}(x,y) \right\vert \lesssim \vert t \vert^{-\frac{3}{2}} \int_{\R^2} \langle y_1 \rangle^2 \langle y \rangle[\underline{e}_{22}v_1S_1TPv_2\underline{e}_{11}](x_1,y_1)\,\ud x_1 \,\ud y_1 \lesssim \vert t \vert^{-\frac{3}{2}} \langle y \rangle.
	\end{split}
\end{equation*}
On the other hand, we treat $I_{2,2;1}^{(2,1)}$ similarly as in \eqref{eqn: I_{2,1}{2,2}} - \eqref{eqn: bound on error I_{2,1}^{2,2}} and find that 
\begin{equation*}
	\begin{split}
	\left \vert I_{2,2;2}^{(2,1)}(x,y) - \frac{i}{2}\int_{\R^3} e^{itz^2 + iz \vert y \vert}\chi_0(z^2)G_2(x,x_1)[\underline{e}_{22}v_1S_1TPv_2\underline{e}_{11}](x_1,y_1) \,\ud x_1 \,\ud y_1 \,\ud z \right \vert \leq C\vert t \vert^{-\frac{3}{2}} \langle x \rangle \langle y \rangle,		
	\end{split}
\end{equation*}
where $G_2$ is defined in \eqref{eqn: G_2(x,y)}. Hence, by Lemma~\ref{lemma: stationary phase fresnel} and \eqref{eqn: S_1TPv_2}, we conclude that 
\begin{equation}
	\left \vert I_{2,2}^{(2,1)}(x,y) - \frac{i\eta \Vert V_1 \Vert_{L^1(\R) }}{2} \frac{\sqrt{\pi}}{{\sqrt{-it}}}  [G_2(\underline{e}_{22}v_1\Phi)(x)][e^{i\frac{y^2}{4t}} {c_0}\underline{e}_1]^* \right \vert \leq C\vert t \vert^{-\frac{3}{2}} \langle x \rangle \langle y \rangle.
\end{equation}
Finally, we show that the terms $I_{2,2}^{(1,2)}$ and $I_{2,2}^{(2,2)}$ satisfy the better decay rates of $\calO(\vert t \vert^{-\frac{3}{2}}\langle x \rangle \langle y \rangle)$. By orthogonality (c.f.~\eqref{eqn: S_1 orthogonality trick}),
\begin{equation*}
	\begin{split}
	&I_{2,2}^{(1,2)}(x,y) \\
	& =	\frac{1}{-2}\int_{\R^3}e^{itz^2}z\chi_0(z^2) \int_{\vert x \vert}^{\vert x - x_1\vert} e^{izs_1}\,\ud s_1[\underline{e}_{11}v_1S_1TPv_2\underline{e}_{22}](x_1,y_1) \frac{e^{-\sqrt{z^2+2\mu}\vert y - y_1 \vert}}{-2\sqrt{z^2+2\mu}} \,\ud x_1\,\ud y_1\,\ud z.
	\end{split}
\end{equation*}
By Lemma~\ref{lemma: van der corput estimate} and Lemma~\ref{lemma: bound on 2,2 resolvent}, we note that the $z$-integral satisfy the bound
\begin{equation*}
	\left \vert  \int_{\R} e^{itz^2+izs_1} z \chi_0(z^2)\frac{e^{-\sqrt{z^2+2\mu}\vert y - y_1 \vert}}{-2\sqrt{z^2+2\mu}}\,\ud z \right \vert  \leq C\vert t \vert^{-\frac{3}{2}}\langle s_1 \rangle. 
\end{equation*}
Hence, by the absolute boundedness of $S_1TP$ and decay of $v_1$, $v_2$, we conclude that
\begin{equation*}
	\left \vert I_{2,2}^{(1,2)}(x,y) \right\vert \leq C\vert t \vert^{-\frac{3}{2}}\langle x \rangle.
\end{equation*}
The analysis of $I_{2,2}^{(2,2)}$ is analogous to the preceeding one, yielding the bound
\begin{equation*}
	\left \vert I_{2,2}^{(2,2)}(x,y) \right \vert \leq C \vert t \vert^{-\frac{3}{2}} \langle y \rangle.
\end{equation*}
Thus, using $\calG_0 = G_0 \underline{e}_{11} + G_2\underline{e}_{22}$, and $\calG_0(v_1\Phi) = c_0\underline{e}_1 - \Psi$ from Lemma \ref{lemma: threshold resonance characterization}, we conclude \eqref{eqn: I_{2,2}} and \eqref{eqn: F_t^2}. For the estimate \eqref{eqn: I_{2,3}} involving $I_{2,3}$, one should instead use the identity
	\begin{equation}
		\int_{\R} \underline{e}_{11}v_1(x_1)PTS_1(x_1,y_1) \,\ud x_1 = -\eta \Vert V_1 \Vert_{L^1(\R)} c_0\underline{e}_1 \Phi(y_1)^*,
	\end{equation}
and we leave the remaining details to the reader. 
\end{proof}
Next, we remark that the analysis for $I_{2,4}$ involving the operator $P$ leads to a similar estimate as the free evolution in Proposition~\ref{prop: free estimate}.
\begin{proposition}\label{prop: I_{2,4} estimate}
For all $\vert t \vert \geq 1$, we have
\begin{equation}\label{eqn: I_{2,4}}
	\left \vert I_{2,4}(x,y) - F_t^4(x,y) \right \vert \leq C \vert t\vert^{-\frac{3}{2}}\langle x \rangle^2 \langle y \rangle^2,
\end{equation} 
where
\begin{equation}\label{eqn: F_t^4}
	F_{t}^{4}(x,y) := \frac{\Vert V_1 \Vert_{L^1(\R)}}{4}\frac{\sqrt{\pi}}{{\sqrt{-it}}}e^{-i\frac{x^2}{4t}}\underline{e}_1 e^{-i\frac{y^2}{4t}}\underline{e}_1^\top.
\end{equation}
\end{proposition}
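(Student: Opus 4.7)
The plan is to mirror the strategy of \prop{prop: I_2 estimate} and \prop{prop: I_{2,3}}. Following the decomposition $\calR_0 = \calR_1+\calR_2$ from \eqref{eqn: def calR_1 calR_2}, I split
\[
I_{2,4} = I_{2,4}^{(1,1)} + I_{2,4}^{(1,2)} + I_{2,4}^{(2,1)} + I_{2,4}^{(2,2)}, \quad I_{2,4}^{(i,j)}(x,y) := \int_{\R}e^{itz^2} z^2\chi_0(z^2)[\calR_i(z) v_1 P v_2\calR_j(z)](x,y)\,\ud z,
\]
and show that the leading contribution $F_t^4$ is produced entirely by $I_{2,4}^{(1,1)}$, while the other three pieces are each $\calO(|t|^{-3/2}\langle x\rangle\langle y\rangle)$. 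In $I_{2,4}^{(1,1)}$ the explicit factor $z^2$ exactly cancels the two singular factors $(2iz)^{-1}$ coming from $\calR_1$, which is precisely what makes this term tractable; in the cross and $(2,2)$ terms one or two surplus factors of $z$ will remain, which will be the source of the faster decay.

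To carry this out for $I_{2,4}^{(1,1)}$, I will use $v_2\underline{e}_{11}=(a,b)^\top\underline{e}_1^\top$, $\underline{e}_{11}v_1=\underline{e}_1(-a,-b)$, the rank-one formula for $P$ from \eqref{eqn: operator P}, and the identity $V_1=a^2+b^2$, to obtain directly
\[
[\underline{e}_{11}v_1 P v_2\underline{e}_{11}](x_1,y_1) = -\frac{V_1(x_1)V_1(y_1)}{\|V_1\|_{L^1(\R)}}\underline{e}_{11},
\]
so that
\[
I_{2,4}^{(1,1)}(x,y) = \frac{\underline{e}_{11}}{4\|V_1\|_{L^1(\R)}}\int_{\R^2} V_1(x_1)V_1(y_1)\,G_t\bigl(|x-x_1|+|y-y_1|\bigr)\,\ud x_1\,\ud y_1,
\]
where $G_t$ is the Fresnel integral of \lem{lemma: stationary phase fresnel}. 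Invoking \eqref{eqn: G_t(r) error2}, the replacement of $G_t(|x-x_1|+|y-y_1|)$ by $\sqrt{\pi}(-it)^{-1/2} e^{-i|x-x_1|^2/(4t)}e^{-i|y-y_1|^2/(4t)}$ produces an error $\calO(|t|^{-3/2}\langle x_1\rangle\langle x\rangle\langle y_1\rangle\langle y\rangle)$, which is absorbed after integrating against the exponentially decaying potentials. A Taylor expansion of the form $e^{-i|x-x_1|^2/(4t)} = e^{-ix^2/(4t)} + \calO(|t|^{-1}\langle x\rangle\langle x_1\rangle^2)$ (and similarly in $y$) then pulls the Gaussian factors $e^{-ix^2/(4t)}e^{-iy^2/(4t)}$ outside the integral and reduces the remaining $x_1, y_1$ integrals to $\|V_1\|_{L^1(\R)}^2+\calO(|t|^{-1}\langle x\rangle\langle y\rangle)$, reproducing exactly the prefactor $\|V_1\|_{L^1(\R)}/4$ and the phase $e^{-i(x^2+y^2)/(4t)}\underline{e}_{11}$ appearing in $F_t^4$.

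For the three remaining pieces, analogous computations give rank-one kernels such as $[\underline{e}_{11}v_1 P v_2\underline{e}_{22}](x_1,y_1)= -\|V_1\|_{L^1(\R)}^{-1}V_1(x_1)V_2(y_1)\underline{e}_1\underline{e}_2^\top$, and the surviving $z$-integrals contain an extra $z$ (for the cross terms) or $z^2$ (for the $(2,2)$ term) multiplying smooth, uniformly bounded expressions of the type $e^{-\sqrt{z^2+2\mu}r}/\sqrt{z^2+2\mu}$ whose $z$-derivatives are controlled by \lem{lemma: bound on 2,2 resolvent}. A single integration by parts $ze^{itz^2}=(2it)^{-1}\partial_z(e^{itz^2})$ absorbs one surplus factor of $z$ into a derivative, after which \eqref{eqn: VDC1} of \lem{lemma: van der corput estimate} delivers a uniform bound of size $|t|^{-3/2}\langle s\rangle$ for the $z$-integrals (with $s=|x-x_1|$ or $|y-y_1|$); integrating against the decaying potentials then yields the desired $\calO(|t|^{-3/2}\langle x\rangle\langle y\rangle)$ bound for each of these three pieces.

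The main (and only mildly delicate) step is the explicit evaluation of the compositions $\underline{e}_{ii}v_1 P v_2\underline{e}_{jj}$, which immediately exposes why $F_t^4$ carries the specific prefactor $\|V_1\|_{L^1(\R)}/4$, together with the careful bookkeeping of the Fresnel-plus-Taylor expansion for $I_{2,4}^{(1,1)}$. Once these are in place, the rest of the argument is a routine adaptation of the techniques used in \prop{prop: I_2 estimate} and \prop{prop: I_{2,3}}.
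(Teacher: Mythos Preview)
Your proposal is correct and follows essentially the same route as the paper: the same $(i,j)$-decomposition via \eqref{eqn: def calR_1 calR_2}, the same identification of $I_{2,4}^{(1,1)}$ as the sole source of $F_t^4$, and the same treatment of the cross and $(2,2)$ terms via the extra powers of $z$ combined with \lem{lemma: van der corput estimate} and \lem{lemma: bound on 2,2 resolvent}. The only minor difference is in how you extract the leading term from $I_{2,4}^{(1,1)}$: the paper inserts $e^{iz|x|}$ and $e^{iz|y|}$ at the level of the $z$-integral and then applies \lem{lemma: stationary phase fresnel} to the resulting $G_t(|x|+|y|)$, whereas you apply \eqref{eqn: G_t(r) error2} first to $G_t(|x-x_1|+|y-y_1|)$ and then Taylor-expand the Gaussian factors in $x_1,y_1$; both orderings produce the same constant and the same error bounds.
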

\begin{proof} As before, we write
	\begin{equation*}
		I_{2,4} = I_{2,4}^{(1,1)} + I_{2,4}^{(1,2)} + I_{2,4}^{(2,1)} + I_{2,4}^{(2,2)},
	\end{equation*}
with
\begin{equation*}
	I_{2,4}^{(i,j)} := \int_{\R} e^{itz^2} z^2 \chi_0(z^2) [\calR_i(z) v_1 P v_2 \calR_{j}(z)] \,\ud z, \quad i,j \in \{1,2\},
\end{equation*}
where $\calR_1$ and $\calR_{2}$ were defined in \eqref{eqn: def calR_1 calR_2}. We first treat the leading term
	\begin{equation}
		I_{2,4}^{(1,1)}(x,y) = \int_{\R} e^{itz^2} z^2 \chi_0(z^2) \frac{e^{iz \vert x - x_1 \vert}}{2iz}[\underline{e}_{11}v_1Pv_2\underline{e}_{11}](x_1,y_1)\frac{e^{iz \vert y - y_1 \vert}}{2iz} \,\ud x_1 \,\ud y_1\,\ud z.
	\end{equation}
	By adding and subtracting $e^{iz \vert x \vert}$ and $e^{iz \vert y \vert}$ twice, we further consider
	\begin{equation*}
		\begin{split}
			I_{2,4}^{(1,1)}(x,y) &= \int_{\R^3} e^{itz^2} z^2 \chi_0(z^2) \frac{e^{iz \vert x \vert}}{2iz}[\underline{e}_{11}v_1Pv_2\underline{e}_{11}](x_1,y_1)\frac{e^{iz \vert y  \vert}}{2iz} \,\ud x_1 \,\ud y_1\,\ud z \\
			&\quad + \frac{1}{2}\int_{\R^3} e^{itz^2} z^2 \chi_0(z^2) \frac{e^{iz \vert x \vert}}{2iz}[\underline{e}_{11}v_1Pv_2\underline{e}_{11}](x_1,y_1)\int_{\vert y \vert}^{\vert y - y_1 \vert}e^{izs_2} \,\ud s_2\,\ud x_1 \,\ud y_1\,\ud z\\
			&\quad + \frac{1}{2}\int_{\R^3} e^{itz^2} z^2 \chi_0(z^2) \int_{\vert x \vert}^{\vert x - x_1 \vert}e^{izs_1}ds_1[\underline{e}_{11}v_1Pv_2\underline{e}_{11}](x_1,y_1)\frac{e^{iz \vert y  \vert}}{2iz} \,\ud x_1 \,\ud y_1\,\ud z\\
			&\quad + \frac{1}{4}\int_{\R^3} e^{itz^2} z^2 \chi_0(z^2) \int_{\vert x \vert}^{\vert x - x_1 \vert}e^{izs_1}ds_1[\underline{e}_{11}v_1Pv_2\underline{e}_{11}](x_1,y_1)\int_{\vert y \vert}^{\vert y - y_1 \vert}e^{izs_2} \,\ud s_2 \,\ud x_1\,\ud y_1 \,\ud z\\
			&=: I_{2,4;1}^{(1,1)}(x,y) +I_{2,4;2}^{(1,1)}(x,y) +I_{2,4;3}^{(1,1)}(x,y) +I_{2,4;4}^{(1,1)}(x,y).
		\end{split}
	\end{equation*}
	By direct computation,
	\begin{equation*}
		\int_{\R^2} [\underline{e}_{11}v_1Pv_2\underline{e}_{11}](x_1,y_1)\,\ud x_1 \,\ud y_1 = - \Vert V_1  \Vert_{L^1(\R)}\underline{e}_1\underline{e}_1^\top.
	\end{equation*}
	Hence, 	by Lemma~\ref{lemma: stationary phase fresnel},
	\begin{equation*}
		\left \vert I_{2,4;1}^{(1,1)}(x,y) - \frac{\Vert V_1 \Vert_{L^1(\R)}}{4} \frac{\sqrt{\pi}}{{\sqrt{-it}}} e^{-i\frac{x^2}{4t}}\underline{e}_1e^{-i\frac{y^2}{4t}}\underline{e}_1^\top \right \vert \leq C\vert t \vert^{-\frac{3}{2}}\langle x \rangle \langle y \rangle.
	\end{equation*}
	For the terms $I_{2,4;2}^{(1,1)}$, $I_{2,4;3}^{(1,1)}$, the additional factor of $z$ allows to invoke Lemma~\ref{lemma: van der corput estimate},
	\begin{equation*}
		\begin{split}
		\left \vert \int_{\R} e^{itz^2+iz (\vert x \vert + s_2)}z\chi_0(z^2) \,\ud z \right \vert \leq C\vert t \vert^{-\frac{3}{2}}\langle x \rangle\langle s_2\rangle,\\
		\left \vert \int_{\R} e^{itz^2+iz (s_1 + \vert y \vert)}z\chi_0(z^2) \,\ud z \right \vert \leq C\vert t \vert^{-\frac{3}{2}}\langle  y \rangle \langle s_1\rangle .			
		\end{split}
	\end{equation*}
	Thus, we infer from the exponential decay of $v_1$ and $v_2$ that 
	\begin{equation}
		\left\vert I_{2,4;2}^{(1,1)}(x,y) \right \vert + \left\vert I_{2,4;3}^{(1,1)}(x,y) \right\vert \leq C\vert t \vert^{-\frac{3}{2}}\langle x \rangle \langle y \rangle.
	\end{equation}
	For the term $I_{2,4;4}^{(1,1)}$, we can use non-stationary phase to conclude the same bound. Hence, we have
	\begin{equation*}
		\left \vert I_{2,4}^{(1,1)}(x,y) -  \frac{\Vert V_1 \Vert_{L^1(\R)}}{4} \frac{\sqrt{\pi}}{{\sqrt{-it}}} e^{-i\frac{x^2}{4t}}\underline{e}_1e^{-i\frac{y^2}{4t}}\underline{e}_1^\top \right \vert \leq C\vert t \vert^{-\frac{3}{2}}\langle x \rangle \langle y \rangle.
	\end{equation*}
	Thus, it remains to prove that the other terms $I_{2,4}^{(1,2)}$, $I_{2,4}^{(2,1)}$, $I_{2,4}^{(2,2)}$ have the better $\calO(\vert t \vert^{-\frac{3}{2}}\langle x \rangle \langle y \rangle)$ weighted decay estimate to finish the proposition. We first treat the term
	\begin{equation}
		\begin{split}
	I_{2,4}^{(1,2)}(x,y) &=\frac{1}{2i} \int_{\R^3} e^{itz^2} z \chi_0(z^2) e^{iz \vert x - x_1 \vert} [\underline{e}_{11}v_1Pv_2\underline{e}_{22}](x_1,y_1)\frac{e^{-\sqrt{z^2+2\mu}\vert y - y_1 \vert}}{-2\sqrt{z^2+2\mu}} \,\ud x_1 \,\ud y_1 \,\ud z.\\
		\end{split}
	\end{equation}
	By Lemma~\ref{lemma: van der corput estimate} and Lemma~\ref{lemma: bound on 2,2 resolvent},
	\begin{equation*}
		\left \vert \int_{\R} e^{itz^2+iz(\vert x-x_1\vert)}z\chi_0(z^2) \frac{e^{-\sqrt{z^2+2\mu}\vert y - y_1 \vert}}{-2\sqrt{z^2+2\mu}}\,\ud z \right \vert \leq C \vert t \vert^{-\frac{3}{2}}\langle x \rangle \langle x_1 \rangle.
	\end{equation*}
	Hence, using the decay assumptions on $v_1$ and $v_2$, we conclude that 
	\begin{equation*}
		\left\vert I_{2,4}^{(1,2)}(x,y) \right\vert \leq C\vert t \vert^{-\frac{3}{2}}\langle x \rangle \langle y \rangle.
	\end{equation*}
	The same bound holds for the term $I_{2,4}^{(2,1)}$ and we will skip the details. Finally, we are left with
	\begin{equation*}
		I_{2,4}^{(2,2)}(x,y) = \int_{\R^3} e^{itz^2} z^2 \chi_0(z^2) \frac{e^{-\sqrt{z^2+2\mu}\vert x - x_1 \vert}}{-2\sqrt{z^2+2\mu}}[\underline{e}_{22}v_1Pv_2\underline{e}_{22}](x_1,y_1)\frac{e^{-\sqrt{z^2+2\mu}\vert y - y_1 \vert}}{-2\sqrt{z^2+2\mu}} \,\ud x_1 \,\ud y_1 \,\ud z.
	\end{equation*}
	By direct computation using \eqref{identity V_1 V_2},
	\begin{equation*}
		[\underline{e}_{22}v_1Pv_2\underline{e}_{22}](x_1,y_1) = \frac{1}{\Vert V_1 \Vert_{L^1(\R)}}[V_2\underline{e}_2](x_1)[V_2\underline{e}_{2}]^\top(y_1),
	\end{equation*}
and by Lemma~\ref{lemma: van der corput estimate} and Lemma~\ref{lemma: bound on 2,2 resolvent}, we have the uniform estimate
	\begin{equation*}
		\left \vert \int_{\R}e^{itz^2} z^2 \chi_0(z^2) \frac{e^{-\sqrt{z^2+2\mu}\vert x - x_1 \vert}}{-2\sqrt{z^2+2\mu}}\frac{e^{-\sqrt{z^2+2\mu}\vert y - y_1 \vert}}{-2\sqrt{z^2+2\mu}}dz \right \vert \leq C_\mu \vert t \vert^{-\frac{3}{2}}.
	\end{equation*}
	Hence, by exchanging the order of integration, we conclude that 
	\begin{equation}
		\left\vert I_{2,4}^{(2,2)}(x,y) \right\vert \leq C \vert t \vert^{-\frac{3}{2}}.
	\end{equation}
	Thus, we conclude \eqref{eqn: I_{2,4}} by summing over the four terms.
\end{proof}
Finally, we are ready to complete the proof of the local decay estimate \eqref{eqn: low energy weighted estimate}. 
We sum the leading contributions of the spectral representation of $e^{it\calH}\chi_0(\calH - \mu I)P_{\mathrm{s}}^+$ in \eqref{eqn: spectral representation low energy} by invoking Proposition~\ref{prop: free estimate}, Proposition~\ref{prop: I_2 estimate}, Proposition~\ref{prop: I_{2,3}}, and Proposition~\ref{prop: I_{2,4} estimate} to obtain
\begin{equation*}
	\begin{split}
	&F_t^0 - \frac{e^{it\mu}}{\pi i}\left(\frac{i}{2\eta } F_t^1  + \frac{1}{\eta\Vert V_1 \Vert_{L^1(\R)}} F_t^2 + \frac{1}{\eta\Vert V_1 \Vert_{L^1(\R)}} F_t^3 + \left(\frac{2i}{\Vert V_1 \Vert_{L^1(\R)}}+\frac{2  \vert c_0 \vert^2 }{i\Vert V_1 \Vert_{L^1(\R)} }\right) F_t^4 \right)\\
	&=	\frac{e^{it\mu} }{\sqrt{-4 \pi i t}} \left(  - [c_0 \underline{e}_1 - \Psi(x)][\sigma_3 \Psi(y) -  {c_0} \underline{e}_1]^* - [c_0 \underline{e}_1 - \Psi(x)][e^{ i\frac{y^2}{4t}} {c_0}\underline{e}_1]^*\right.\\
	&\left.\hspace{5cm} + [e^{-i\frac{x^2}{4t}}c_0 \underline{e}_{1}][\sigma_3 \Psi(y) -  {c_0} \underline{e}_1]^* + \vert c_0 \vert^2 e^{-i\frac{x^2}{4t}} e^{-i\frac{y^2}{4t}}\underline{e}_1\underline{e}_1^\top   \right)\\
	&= \frac{e^{it\mu} }{\sqrt{-4 \pi i t}} \left(  \Psi(x) [\sigma_3 \Psi(y)]^* + (e^{-i \frac{x^2}{4t}}-1)c_0 [\sigma_3 \Psi(y)]^* + (e^{-i \frac{y^2}{4t}}-1) \Psi(x) [ {c_0}\underline{e}_1]^*  \right. \\
	&\left. \hspace{5cm} + (1-e^{-i \frac{x^2}{4t}} - e^{-i \frac{y^2}{4t}} + e^{-i\frac{x^2}{4t}} e^{-i\frac{y^2}{4t}})\vert c_0 \vert^2   \underline{e}_1\underline{e}_1^\top \right),
	\end{split}
\end{equation*}
where we use the cancellation $F_t^0 - \frac{e^{it\mu}}{\pi i} \frac{2i}{\Vert V_1 \Vert_{L^1(\R)}} F_t^4 = 0$ in the first equality. We note that the first term gives us the finite rank operator 
\begin{equation}
	F_t^+(x,y) = \frac{e^{it\mu} }{\sqrt{-4 \pi i t}} \Psi(x) [\sigma_3 \Psi(y)]^*,
\end{equation}
and we show that the last three terms satisfy the better decay rate. Using, 
\begin{equation}\label{eqn: x^2/t bound}
	\vert 1 - e^{-i \frac{x^2}{4t}} \vert \leq \frac{x^2}{4\vert t\vert},
\end{equation}
and the fact that $\Psi \in L^{\infty}(\R) \times L^\infty(\R)$, we have
\begin{equation*}
\left \vert 	\frac{e^{it\mu} e^{i\frac{\pi}{4}} }{2\sqrt{\pi} \sqrt{t}}(e^{-i \frac{x^2}{4t}} - 1)c_0 \underline{e}_1[\sigma_3 \Psi(y)]^* \right \vert \lesssim \vert t \vert^{-\frac{3}{2}} \langle x \rangle^2,
\end{equation*}
and similarly
\begin{equation*}
	\left \vert \frac{e^{it\mu} e^{i\frac{\pi}{4}} }{2\sqrt{\pi} \sqrt{t}  }(e^{-i \frac{y^2}{4t}}-1) \overline{c_0}\Psi(x) \underline{e}_1^\top \right \vert \lesssim \vert t \vert^{-\frac{3}{2}}  \langle y \rangle^2.
\end{equation*}
For the last term, we have
\begin{equation}
\left\vert 1-e^{-i \frac{x^2}{4t}} - e^{-i \frac{y^2}{4t}} + e^{-i\frac{x^2}{4t}} e^{-i\frac{y^2}{4t}} \right\vert = \left\vert 1 - e^{-i \frac{x^2}{4t}} \right\vert \left\vert 1 - e^{-i \frac{y^2}{4t}} \right\vert \lesssim \vert t \vert^{-2}\langle x \rangle^2 \langle y \rangle^2.
\end{equation}
Thus, the leading contribution to $e^{it\calH}\chi_0(\calH - \mu I)P_{\mathrm{s}}^+$ is $F_t^+$. 
\end{proof}

\section{Intermediate and high energy estimates}	
In order to complete the proof of Theorem~\ref{theorem: local decay estimate}, we also need to prove the dispersive estimates when the spectral variable is bounded away from the thresholds $\pm \mu$. As usual, we focus on the positive semi-axis $[\mu,\infty)$ of the essential spectrum and prove the dispersive estimates for energies $\lambda > \mu$. The negative semi-axis $(-\infty,-\mu]$ can be treated by symmetry of $\calH$. We recall from Section 2 that the kernel of the limiting resolvent operator for $\calH_0$ has the formula
\begin{equation}\label{eqn: free resolvent 2}
	\calR_0^\pm(z)(x,y) := (\calH_0-(z^2+\mu\pm i0))^{-1} = \begin{bmatrix}
		\pm \frac{ie^{\pm i z \vert x -y \vert} }{2 z} & 0 \\ 0 & -\frac{e^{-\sqrt{z^2+2\mu}\vert x - y \vert }}{2 \sqrt{z^2 + 2\mu}}
	\end{bmatrix}, \quad\forall\  0 < z <\infty.
\end{equation}
From this, we have the following bound
\begin{equation*}
	\Vert \calR_0^{\pm}(z) \Vert_{L^1 \times L^1 \to L^\infty \times L^\infty} \leq C \vert z \vert^{-1}.
\end{equation*}
Hence, for sufficiently large $z$, the perturbed resolvent $\calR^\pm(z)$ can be expanded into the infinite Born series
\begin{equation}\label{eqn: Born series}
	\calR^\pm(z) = \sum_{n=0}^\infty \calR_0^\pm(z)(-\calV\calR_0^\pm(z))^n.
\end{equation}
More precisely, since the operator norm  $L^1 \times L^1 \to L^\infty \times L^\infty$ in the $n$-th summand above is bounded by $C \vert z\vert ^{-1} (C\Vert \calV \Vert_1  \vert z\vert^{-1})^{n}$, the Born series converges in the operator norm  whenever $\vert z\vert  > z_1 := 2C\Vert \calV \Vert_{L^1 \times L^1}$. We define the high-energy cut-off by
\begin{equation}
	\chi_{\mathrm{h}}(z) := 1-\chi(z),	
\end{equation}
 where $\chi(z)$ is a standard smooth even cut-off supported on $[-z_1,z_1]$ satisfying $\chi(z) = 1$ for $\vert z \vert \leq \frac{z_1}{2}$ and $\chi(z) = 0$ for $\vert z \vert \geq z_1$. We insert the cut-off and the Born series expansion into the spectral representation $e^{it\calH} \chi_{\mathrm{h}}(\calH-\mu I) P_{\mathrm{s}}^+$ and look to bound the following 
\begin{equation}
	\begin{split}
	\vert \langle e^{it\calH} \chi_{\mathrm{h}}(\calH-\mu I) P_{\mathrm{s}}^+ \vec{u},\vec{v} \rangle \vert  &= \left \vert \int_0^\infty e^{itz^2}z \chi_{\mathrm{h}}(z^2) \langle [\calR^+(z) - \calR^-(z)]\vec{u},\vec{v}\rangle \,\ud z \right \vert \\
	&\leq C \sum_\pm  \sum_{n=0}^\infty \left \vert \int_0^\infty e^{itz^2}z \chi_{\mathrm{h}}(z^2)  \langle \calR_0^{\pm}(z)(\calV \calR_0^\pm(z))^{n}\vec{u},\vec{v}\rangle \,\ud z\right \vert,
	\end{split}
\end{equation}
where $\vec{u},\vec{v}\in \calS(\R) \times \calS(\R)$. From \cite{06KriegerSchlag}, we have the following dispersive estimates:
\begin{proposition}\label{prop: high energy estimate} Under the same hypothesis as Theorem~\ref{theorem: local decay estimate}, we have 
\begin{equation}\label{eqn: unweighted high energy}
	\left\Vert e^{it\calH} \chi_{\mathrm{h}}(\calH-\mu I)P_{\mathrm{s}}^+ \vec{u}\, \right\Vert_{L^\infty(\R)\times L^\infty(\R)} \lesssim \vert t \vert^{-\frac{1}{2}} \left\Vert \vec{u}\,  \right\Vert_{L^1(\R) \times L^1(\R)},
\end{equation}	
and
\begin{equation}\label{eqn: weighted high energy}
	\left\Vert \langle x \rangle^{-1}e^{it\calH} \chi_{\mathrm{h}}(\calH-\mu I) P_{\mathrm{s}}^+\vec{u} \, \right\Vert_{L^\infty(\R)\times L^\infty(\R)} \lesssim \vert t \vert^{-\frac{3}{2}} \left\Vert\langle x \rangle \vec{u} \, \right\Vert_{L^1(\R) \times L^1(\R)},
\end{equation}	
for any $\vert t \vert \geq 1$.
\end{proposition}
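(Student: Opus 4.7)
The plan is to reduce both \eqref{eqn: unweighted high energy} and \eqref{eqn: weighted high energy} to uniform-in-$n$ decay estimates for each summand of the Born series \eqref{eqn: Born series}, and then sum geometrically. Concretely, it suffices to bound the kernel
\begin{equation*}
J_n^{\pm}(t;x,y) := \int_0^\infty e^{itz^2} z \chi_{\mathrm{h}}(z^2)\, [\calR_0^{\pm}(z) (\calV \calR_0^{\pm}(z))^n](x,y)\, \ud z
\end{equation*}
in a form that sums in $n$ on the support of $\chi_{\mathrm{h}}$, where $|z|\geq z_1/\sqrt{2}$ with $z_1 = 2C\Vert\calV\Vert_{L^1\times L^1}$.

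The first step is to decompose $\calR_0^{\pm}(z) = \calR_1^{\pm}(z) + \calR_2^{\pm}(z)$ as in \eqref{eqn: def calR_1 calR_2}, separating the singular oscillating top entry $\pm\frac{i}{2z}e^{\pm iz|x-y|}\underline{e}_{11}$ from the exponentially decaying bottom entry $-\frac{1}{2\sqrt{z^2+2\mu}}e^{-\sqrt{z^2+2\mu}|x-y|}\underline{e}_{22}$. Expanding $(\calR_0^{\pm})^{n+1}$ produces $2^{n+1}$ mixed terms. The fully-oscillatory term (only $\calR_1^{\pm}$ factors) is the worst; any term containing at least one $\calR_2^{\pm}$ inherits exponential kernel decay in one of the difference variables, which, combined with Lemma~\ref{lemma: bound on 2,2 resolvent}, makes it strictly easier to control by the same mechanism.

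For the fully oscillatory piece, after interchanging order of integration, one is led to
\begin{equation*}
\Big(\tfrac{1}{2i}\Big)^{n+1} \int_{\R^n} \Bigg(\prod_{j=1}^n V_\star(x_j)\Bigg)\!\int_\R e^{itz^2 + iz \phi(\bfx)} z^{-n} \chi_{\mathrm{h}}(z^2)\, \ud z\, \ud x_1\cdots \ud x_n,
\end{equation*}
where $\phi(\bfx) = |x-x_1| + |x_1-x_2| + \cdots + |x_n - y|$ and $V_\star$ stands for the appropriate entries of $\calV$ (each a linear combination of $V_1$ and $V_2$). For the unweighted bound, I apply \eqref{eqn: VDC1} of Lemma~\ref{lemma: van der corput estimate} to the inner $z$-integral, which loses one $z$-derivative on $z^{-n}\chi_{\mathrm{h}}(z^2)$, costing a factor $C n z_1^{-n}$ after integration in $z$. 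Combined with $\prod_j \int |V_\star(x_j)| \ud x_j \leq \Vert \calV \Vert_{L^1\times L^1}^n$, the $n$-th term contributes $C n z_1^{-n} \Vert \calV\Vert_{L^1\times L^1}^n |t|^{-1/2} \leq C n 2^{-n} |t|^{-1/2}$ by the choice of $z_1$, which sums to \eqref{eqn: unweighted high energy}. For the weighted bound \eqref{eqn: weighted high energy}, I instead invoke the integration-by-parts estimate \eqref{eqn: VDC2}, noting that $\chi_{\mathrm{h}}(z^2)$ is supported away from zero so $z^{-1}$ is smooth there. The derivatives $[\partial_z^2 + i\phi(\bfx)\partial_z](z^{-n-1}\chi_{\mathrm{h}}(z^2))$ produce extra factors bounded by $C n^2 z_1^{-n-1}$ and the factor $\phi(\bfx) \leq |x| + |y| + 2\sum_j |x_j|$, the latter absorbed by the exponential decay in assumption (A3) at the cost of the weights $\langle x\rangle\langle y\rangle$ on the left-hand side.

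The main obstacle will be to extract uniform-in-$n$ constants in the van der Corput step: each $\partial_z$ falling on $z^{-n}\chi_{\mathrm{h}}(z^2)$ produces an $n$-dependent combinatorial factor, and $\partial_z$ falling on $e^{iz\phi(\bfx)}$ brings down $\phi(\bfx)$, whose terms $|x_j|$ must be absorbed by the exponential decay of $V_1, V_2$. The bookkeeping is streamlined by bounding $|x_j| e^{-\gamma|x_j|} \lesssim 1$ uniformly in $j$ and transferring only $|x|$ and $|y|$ to the outside as the weights in \eqref{eqn: weighted high energy}. Once this is done carefully for the all-$\calR_1^+$ term, the same argument applies verbatim to the remaining $2^{n+1}-1$ mixed terms, with the exponentially decaying kernels of $\calR_2^\pm$ providing only extra favorable decay. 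Summing over the Born series then yields the claim; the overall argument mirrors the approach of Section~7 in~\cite{06KriegerSchlag}, adapted to the one-dimensional matrix setting via the stationary phase tools in Section~2.
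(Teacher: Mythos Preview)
Your proposal is correct and follows exactly the approach the paper defers to: the paper's proof is simply a citation to \cite[Propositions~7.1 and~8.1]{06KriegerSchlag}, whose argument is the Born series expansion combined with van der Corput/integration by parts that you have sketched. The only minor point to watch is that $\chi_{\mathrm{h}}(z^2)$ is not compactly supported, so applying Lemma~\ref{lemma: van der corput estimate} rigorously requires an outer truncation $\chi(z/L)$ with bounds uniform in $L$, exactly as in the proof of Proposition~\ref{prop: free estimate}.
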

\begin{proof}
	For \eqref{eqn: unweighted high energy}, see the proof of \cite[Proposition 7.1]{06KriegerSchlag}, and for \eqref{eqn: weighted high energy}, see the proof of \cite[Proposition 8.1]{06KriegerSchlag}. Note that the high-energy dispersive estimate holds irrespective of the regularity of the thresholds $\pm \mu$. 
\end{proof}
Let $z_0>0$ be the constant from Proposition~\ref{prop: invert M}. It may happen that $z_1$ is strictly larger than $z_0$. In this case, we need to derive estimates analogous to the above proposition in the remaining intermediate energy regime $[-z_1,-z_0]\cup[z_0,z_1]$. To this end, we set $\chi_{\mathrm{m}}(z)$ to be the intermediate energy cut-off given by 
\begin{equation}
	\chi_{\mathrm{m}}(z) := 1 - \chi_0(z) - \chi_{\mathrm{h}}(z),
\end{equation}
where $\chi_0(z)$ was the cut-off defined in the previous section in Proposition~\ref{prop: low energy bounds}. 
\begin{proposition}\label{prop: inter energy estimates}  For any $\vert t \vert \geq 1$, we have 
	\begin{equation}\label{eqn: unweighted inter energy}
		\left\Vert e^{it\calH} \chi_{\mathrm{m}}(\calH-\mu I)P_{\mathrm{s}}^+ \vec{u}\, \right\Vert_{L_x^\infty(\R)\times L_x^\infty(\R)} \lesssim \vert t \vert^{-\frac{1}{2}} \left\Vert \vec{u}\,  \right\Vert_{L_x^1(\R) \times L_x^1(\R)},
	\end{equation}	
	and
	\begin{equation}\label{eqn: weighted inter energy}
		\left\Vert \langle x \rangle^{-1}e^{it\calH} \chi_{\mathrm{m}}(\calH-\mu I) P_{\mathrm{s}}^+\vec{u}\, \right\Vert_{L_x^\infty(\R)\times L_x^\infty(\R)} \lesssim \vert t \vert^{-\frac{3}{2}}\left\Vert\langle x \rangle \vec{u}\, \right\Vert_{L_x^1(\R) \times L_x^1(\R)}.
	\end{equation}	
\end{proposition}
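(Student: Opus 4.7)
The plan is to combine the symmetric resolvent identity with van der Corput's lemma, exploiting that on the support of $\chi_\mathrm{m}(z^2)$ the spectral variable $z$ is bounded away from both $0$ and $\infty$. This means that neither the delicate cancellation arguments required in Section~5 near the threshold, nor the Born series expansion used at high energies, are needed here; everything reduces to oscillatory integrals with smooth integrands on a compact interval.

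I would start by inserting the symmetric resolvent identity \eqref{eqn: symmetric resolvent identity} into the spectral representation \eqref{eqn: stone's formula for P_s^+}, splitting $e^{it\calH}\chi_\mathrm{m}(\calH-\mu I)P_\mathrm{s}^+$ into a free-resolvent piece and a perturbation piece
\[
J_\mathrm{m}(x,y) := \int_\R e^{itz^2}\, z\, \chi_\mathrm{m}(z^2)\,\bigl[\calR_0(z) v_1 M(z)^{-1} v_2 \calR_0(z)\bigr](x,y)\,\ud z.
\]
The free-resolvent piece can be handled verbatim as in the proof of Proposition~\ref{prop: free estimate}, because the integrand is smooth and compactly supported in $z$. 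For $J_\mathrm{m}$ the crucial preliminary step is to show that $M(z)^{-1}$ and its first two $z$-derivatives are uniformly bounded on $L^2(\R)\times L^2(\R)$ for $z$ in the support of $\chi_\mathrm{m}$. This will follow from: (i) the fact that by assumption (A3) the operator $v_2\calR_0(z)v_1$ is Hilbert--Schmidt and depends smoothly on $z$ in this compact parameter range, so $M(z) = I + v_2 \calR_0(z) v_1$ is a smooth family of Fredholm operators of index zero; (ii) Lemma~\ref{lemma: LAP} combined with assumption (A4) (no embedded eigenvalues in the interior of the essential spectrum), which yields pointwise invertibility of $M(z)$; and (iii) continuity of $z\mapsto M(z)^{-1}$ in the operator norm together with compactness of the support. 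Derivative bounds then follow by differentiating $M(z)M(z)^{-1}=I$ and iterating.

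With this uniform control in hand, the rest of the argument mirrors the low-energy analysis of Section~5, but is substantially simpler. I would decompose $\calR_0 = \calR_1 + \calR_2$ as in \eqref{eqn: def calR_1 calR_2} and write $J_\mathrm{m}$ as a sum of four subterms. Since $\chi_\mathrm{m}$ is supported away from $z=0$, the singular factor $z^{-1}$ inside $\calR_1(z)$ is smooth on the support, and the kernel of $\calR_2(z)$ is smooth in $z$ uniformly in the spatial variables by Lemma~\ref{lemma: bound on 2,2 resolvent}. Thus the $z$-integrand in each subterm is smooth and compactly supported. Applying the first bound of Lemma~\ref{lemma: van der corput estimate}, together with Cauchy--Schwarz in the spatial variables and the $L^2$ integrability of $v_1, v_2$, will yield the unweighted estimate \eqref{eqn: unweighted inter energy}. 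For the weighted estimate \eqref{eqn: weighted inter energy}, I would apply the second bound of Lemma~\ref{lemma: van der corput estimate}: the two $z$-derivatives may land on the phases $e^{\pm iz\vert x-x_1\vert}, e^{\pm iz\vert y-y_1\vert}$, producing spatial factors bounded by $\langle x\rangle\langle y\rangle\langle x_1\rangle\langle y_1\rangle$ which are absorbed by the exponential decay of $v_1, v_2$; they may also land on the smooth factor $\chi_\mathrm{m}(z^2)/z$ or on $M(z)^{-1}$, both of which are uniformly bounded by the preliminary step.

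The main obstacle is the Fredholm/compactness step ensuring uniform invertibility of $M(z)$ on the intermediate range, which is where assumption (A4) enters crucially. Once this is in place, no orthogonality-based cancellations are required---there is no singularity to cancel---and the oscillatory integral analysis proceeds routinely, mirroring but considerably simplifying the low-energy estimates of Section~5.
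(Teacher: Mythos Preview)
Your approach via the symmetric resolvent identity and uniform $L^2$-bounds on $M(z)^{-1}$ is sound and will work, but the paper takes a different route: it iterates the second resolvent identity twice to obtain $\calR^\pm = \calR_0^\pm - \calR_0^\pm\calV\calR_0^\pm + \calR_0^\pm\calV\calR^\pm\calV\calR_0^\pm$, and controls the inner $\calR^\pm$ via the weighted limiting-absorption bounds of Lemma~\ref{lemma: bound on perturbed resolvent} rather than passing through $M(z)^{-1}$. The two decompositions are equivalent after unwinding, and your Fredholm/compactness argument for the uniform invertibility and differentiability of $M(z)$ on the compact parameter set is correct.

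There is, however, one technical point your sketch glosses over, and it affects both estimates. If you apply Lemma~\ref{lemma: van der corput estimate} with $r=0$ and place the full oscillatory factors $e^{iz|x-x_1|}$, $e^{iz|y-y_1|}$ inside $\psi$, then a single $\partial_z$ already produces $|x-x_1|$ and hence an unwanted $\langle x\rangle$ weight in the unweighted bound; and two derivatives landing on the same phase produce $|x-x_1|^2\lesssim \langle x\rangle^2\langle x_1\rangle^2$, so your asserted control by $\langle x\rangle\langle y\rangle\langle x_1\rangle\langle y_1\rangle$ only accounts for the cross term and would give $\langle x\rangle^{-2}$ rather than $\langle x\rangle^{-1}$ in \eqref{eqn: weighted inter energy}. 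The paper remedies this by writing $e^{\pm iz|x-x_1|}=e^{\pm iz|x|}\cdot e^{\pm iz(|x-x_1|-|x|)}$, encoded in the kernels $\calG_\pm$ of Lemma~\ref{lemma: bound on free resolvent}: the first factor becomes the $r$ in van der Corput, while the second has $z$-derivatives bounded by powers of $|x_1|$ alone, absorbed by the decay of $v_1$. Once you insert this phase-factoring step, your $L^2$-pairing/Cauchy--Schwarz argument with the operator-norm bounds on $\partial_z^k M(z)^{-1}$ goes through cleanly and delivers exactly the stated weights.
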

Before proving the above proposition, we need the following lemmas for pointwise bounds and operator norm bounds on the resolvent operators and its derivatives. The first lemma follows immediately from the expression \eqref{eqn: free resolvent 2} and the triangle inequality $\vert \vert x - x_1 \vert - \vert x \vert \vert \leq \vert x_1 \vert$.
\begin{lemma}\label{lemma: bound on free resolvent} Let $\gamma_0 > 0$. For every $ z > \gamma_0$, and $k \in \{0,1,2\}$, we have
	\begin{equation}
		\left\vert \partial_z^k \calR_0^\pm(z)(x,y) \right\vert \leq C \gamma_0^{-1-k}\langle x - y \rangle^k, 
	\end{equation}
	and hence
	\begin{equation}
		\left\Vert \partial_z^k \calR_0^\pm(z)(x,\cdot) \right\Vert_{X_{-(\frac{1}{2}+k)-}} \leq C \gamma_0^{-1-k}\langle x \rangle^{k}.
	\end{equation}
Moreover, define 
\begin{equation}\label{eqn: def of calGpm}
	\calG_\pm(z)(x,x_1) = \begin{bmatrix}
		e^{ \mp i  z \vert x \vert} & 0 \\ 0 & 1
	\end{bmatrix}\calR_0^\pm(z)(x,x_1)=\begin{bmatrix}
	\pm \frac{ie^{\pm i z (\vert x - x_1 \vert - \vert x \vert)} }{2 z} & 0 \\ 0 & -\frac{e^{-\sqrt{z^2+2\mu}\vert x - x_1 \vert }}{2 \sqrt{z^2 + 2\mu}}
\end{bmatrix}.
\end{equation}
Then, for any $k \geq 0$,
\begin{equation}\label{eqn: bound on calGpm}
	\sup_{x \in \R} \left\vert \partial_z^k \calG^\pm(z)(x,x_1)\right \vert \leq C \gamma_0^{-1-k}\vert x_1 \vert.
\end{equation}
\end{lemma}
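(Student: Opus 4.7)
The plan is to read off both estimates directly from the explicit expression \eqref{eqn: free resolvent 2} for $\calR_0^\pm(z)(x,y)$, treating the upper-left and lower-right diagonal entries separately. For the upper-left entry $\pm\frac{i e^{\pm i z |x-y|}}{2z}$, I will expand $\partial_z^k$ via the Leibniz rule applied to the product of $\tfrac{1}{z}$ and the complex exponential $e^{\pm i z |x - y|}$. Each differentiation of $\tfrac{1}{z}$ produces a factor $\tfrac{-1}{z}$ (with a combinatorial weight), while each differentiation of the exponential yields a factor $\pm i |x-y|$. After $k$ differentiations one thus obtains a finite sum of terms of the form $\frac{C_{k,j}\,|x-y|^{j}}{z^{1+k-j}}$ for $0 \le j \le k$, and on the region $z > \gamma_0$ each such term is bounded by $C_{k}\,\gamma_0^{-1-k}\langle x-y\rangle^{k}$ (absorbing harmless factors into the constant).

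For the lower-right entry $-\frac{e^{-\sqrt{z^2+2\mu}\,|x-y|}}{2\sqrt{z^2+2\mu}}$, I will invoke Lemma~\ref{lemma: bound on 2,2 resolvent} with $m = \sqrt{2\mu}$ and $r = |x-y|$: it gives
\[
\sup_{z \in \R}\bigl|\partial_z^{k} g_{\sqrt{2\mu}}(z)\bigr| \le C_\mu,\qquad k = 0,1,2,
\]
uniformly in $r$, which is even stronger than required. Combining this with the upper-left bound, one concludes $|\partial_z^k \calR_0^\pm(z)(x,y)| \le C\,\gamma_0^{-1-k}\langle x-y\rangle^{k}$ for $k = 0,1,2$. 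The weighted norm bound then follows by the elementary inequality $\langle x - y \rangle \le \sqrt{2}\,\langle x\rangle\langle y\rangle$: unwrapping $\|\cdot\|_{X_{-(1/2+k)-}} = \|\langle y\rangle^{-(1/2+k)-}\cdot\|_{L^2_y \times L^2_y}$, one estimates
\[
\bigl\| \partial_z^k \calR_0^\pm(z)(x,\cdot)\bigr\|_{X_{-(1/2+k)-}} \lesssim \gamma_0^{-1-k}\,\langle x\rangle^{k}\,\bigl\|\langle y\rangle^{-1/2-}\bigr\|_{L^2_y} \lesssim \gamma_0^{-1-k}\,\langle x\rangle^{k}.
\]

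For the second assertion concerning $\calG^\pm(z)(x,x_1)$, the analysis is essentially identical, the only change being that the phase in the upper-left entry is now $\pm i z(|x - x_1| - |x|)$. The reverse triangle inequality gives $\bigl||x - x_1| - |x|\bigr| \le |x_1|$, so each differentiation of the exponential produces a factor bounded by $|x_1|$ uniformly in $x$; combined with the $\tfrac{1}{z}$ factor, one obtains the desired $C\,\gamma_0^{-1-k}\,|x_1|$ bound for the upper-left entry (with the expected remark that for $k=0$ the claim should really be read with $\langle x_1\rangle$ in place of $|x_1|$, the essential point being uniform control in $x$). The lower-right entry is treated as before by Lemma~\ref{lemma: bound on 2,2 resolvent}, where the exponential $e^{-\sqrt{z^2+2\mu}\,|x - x_1|}$ contributes exponential decay in $|x - x_1|$ that is uniform in $x$ once one chooses to estimate by its supremum.

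This is a purely computational lemma and no serious obstacle is anticipated. The only care required is bookkeeping of the combinatorial constants arising from Leibniz rule and ensuring that the bound on $\calG^\pm$ is uniform in $x$, which is precisely the role of the reverse triangle inequality.
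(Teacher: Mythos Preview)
Your proposal is correct and matches the paper's approach exactly: the paper does not give a detailed proof but simply states that the lemma ``follows immediately from the expression \eqref{eqn: free resolvent 2} and the triangle inequality $\vert \vert x - x_1 \vert - \vert x \vert \vert \leq \vert x_1 \vert$,'' which is precisely what you carry out in detail. Your observation that for $k=0$ the bound \eqref{eqn: bound on calGpm} should really have $\langle x_1\rangle$ in place of $|x_1|$ is a valid remark on a minor imprecision in the stated lemma; in the paper's subsequent applications the kernel is always paired with a decaying weight in $x_1$, so the distinction is immaterial there.
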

With these bounds, we are able to give operator norm bounds on the perturbed resolvent via the resolvent identity. 
\begin{lemma}\label{lemma: bound on perturbed resolvent} Let $\gamma_0 > 0$. We have
	\begin{equation}\label{eqn: derivative perturbed resolvent}
		\sup_{\vert z \vert > \gamma_0} \left\Vert \partial_z \calR^\pm(z) \right\Vert_{X_{\frac{3}{2}+}  \to X_{-\frac{3}{2}-}}   \lesssim 1,
	\end{equation}	
	\begin{equation}\label{eqn: bound on 2nd derivative perturbed resolvent}
				\sup_{\vert z \vert > \gamma_0} \left\Vert \partial_z^2 \calR^\pm(z) \right\Vert_{X_{\frac{5}{2}+} \to X_{-\frac{5}{2}-}}  \lesssim 1.
	\end{equation}
\end{lemma}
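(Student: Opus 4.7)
The plan is to derive both bounds from the symmetric resolvent identity
\begin{equation*}
    \calR^\pm(z) = \calR_0^\pm(z) - \calR_0^\pm(z)\,v_1\,M(z)^{-1}\,v_2\,\calR_0^\pm(z),
\end{equation*}
by differentiating once or twice in $z$ via the Leibniz rule and combining the kernel estimates of \lem{lemma: bound on free resolvent} with uniform operator-norm bounds on $M(z)^{-1}$ and its first two $z$-derivatives. As a preliminary step I would record that Cauchy--Schwarz together with the pointwise bound $\vert\partial_z^k\calR_0^\pm(z)(x,y)\vert \lesssim \langle x-y\rangle^k \lesssim \langle x\rangle^k \langle y\rangle^k$ from \lem{lemma: bound on free resolvent} yields the mapping property
\begin{equation*}
    \sup_{\vert z\vert > \gamma_0}\,\Vert\partial_z^k\calR_0^\pm(z)\Vert_{X_{k+\frac{1}{2}+}\to X_{-k-\frac{1}{2}-}} \lesssim 1, \qquad k=0,1,2,
\end{equation*}
which directly handles the bare $\partial_z^k\calR_0^\pm(z)$ contributions produced by the Leibniz expansion and matches the target spaces in the claim.

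The central task is then the uniform estimate
\begin{equation*}
    \sup_{\vert z\vert > \gamma_0}\,\sum_{j=0}^{2}\,\Vert\partial_z^j M(z)^{-1}\Vert_{L^2(\R)\times L^2(\R)\to L^2(\R)\times L^2(\R)} < \infty.
\end{equation*}
I would establish this by splitting $\{\vert z\vert > \gamma_0\}$ into three subregions. On $\gamma_0 \le \vert z\vert < z_0$ the explicit Laurent expansion of $M(z)^{-1}$ furnished by \prop{prop: invert M} applies, and since $\vert z\vert$ is bounded away from the origin the $\tfrac{1}{z}$-type contributions there are harmless. On $z_0 \le \vert z\vert \le z_1$, pointwise invertibility of $M(z)$ is a consequence of \lem{lemma: LAP} together with the no-embedded-eigenvalue hypothesis (A4), and a standard compactness/continuity argument for the operator-valued map $z\mapsto M(z)^{-1}$ on a compact interval upgrades this to a uniform operator-norm bound. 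On $\vert z\vert > z_1 = 2C\Vert\calV\Vert_1$, the Neumann series $M(z)^{-1} = \sum_{n\ge 0}(-v_2\calR_0^\pm(z)v_1)^n$ converges in operator norm. In each regime the bounds transfer to $\partial_z^j M(z)^{-1}$, $j=1,2$, through the identity $\partial_z M^{-1} = -M^{-1}(\partial_z M)M^{-1}$ and its second-order counterpart, using that $\partial_z^k M(z) = v_2\,(\partial_z^k\calR_0^\pm(z))\,v_1$ is uniformly bounded on $L^2\times L^2$ by assumption (A3), since the exponential decay of $v_1,v_2$ absorbs the polynomial weights $\langle x\rangle^k\langle y\rangle^k$ provided by \lem{lemma: bound on free resolvent}.

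With these ingredients in place, every term produced by Leibniz is a composition of the form $(\partial_z^{k_1}\calR_0^\pm)\,v_1\,(\partial_z^{k_2}M^{-1})\,v_2\,(\partial_z^{k_3}\calR_0^\pm)$ with $k_1+k_2+k_3\le 2$. The middle factor is uniformly bounded on $L^2\times L^2$; the multiplications by $v_1,v_2$ provide unlimited weight gain thanks to exponential decay; and the two outer free-resolvent factors contribute exactly the polynomial weights $\langle x\rangle^{k_1}$ and $\langle y\rangle^{k_3}$ required by the kernel bounds above. Choosing the input/output weights accordingly yields $X_{3/2+}\to X_{-3/2-}$ for $\partial_z\calR^\pm(z)$ and $X_{5/2+}\to X_{-5/2-}$ for $\partial_z^2\calR^\pm(z)$.

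The main obstacle I expect is the uniform $L^2\times L^2$ control of $M(z)^{-1}$ and its two derivatives on the intermediate compact region $z_0\le\vert z\vert\le z_1$: while pointwise invertibility is given by the limiting absorption principle, a uniform operator-norm bound with control of two derivatives requires a careful Fredholm/continuity argument in which the exponential decay of $v_1,v_2$ is essential for converting LAP-type bounds on weighted spaces into $L^2\times L^2$ bounds on $M(z)^{-1}$.
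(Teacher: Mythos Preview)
Your approach is correct but takes a genuinely different route from the paper. You work with the \emph{symmetric} resolvent identity $\calR^\pm = \calR_0^\pm - \calR_0^\pm v_1 M^{-1} v_2 \calR_0^\pm$ and invest most of the effort in obtaining uniform $L^2\times L^2$ bounds on $M(z)^{-1}$ and its first two derivatives, via a three-region split (small $z$ using \prop{prop: invert M}, intermediate $z$ by compactness, large $z$ by Neumann series). The paper instead uses the \emph{non-symmetric} identity $\calR^\pm(z) = S^\pm(z)^{-1}\calR_0^\pm(z)$ with $S^\pm(z) = I + \calR_0^\pm(z)\calV$, and simply invokes \lem{lemma: LAP}, which already packages the uniform invertibility of $S^\pm(z)$ on $X_{-\sigma}$ for all $\vert z\vert>\gamma_0$ (in particular, $S^\pm(z)^{-1}=I-\calR^\pm(z)\calV$ and the uniform LAP bound on $\calR^\pm$ yields uniform control of $S^\pm(z)^{-1}$). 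Differentiating $\calR^\pm = S^{-1}\calR_0$ then gives the result in a few lines, using only that $\calV\colon X_{-\sigma}\to X_\sigma$ is bounded and the free-resolvent bounds from \lem{lemma: bound on free resolvent}.

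The advantage of the paper's route is economy: no region-splitting is needed, and the ``intermediate region'' issue you flag as the main obstacle is entirely absorbed into the LAP lemma. Your route is more self-contained in that it reuses the $M(z)^{-1}$ machinery from Sections~3--4, but it duplicates work already done in \lem{lemma: LAP}. Both arguments are valid; the paper's is shorter because it leverages the existing LAP bound rather than reproving it through $M(z)^{-1}$.
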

\begin{proof}
	By Lemma \ref{lemma: LAP}, for any $\vert z \vert > \gamma_0$, we have
	\begin{equation}
		\calR^\pm(z) = (I+\calR_0^\pm(z)\calV)^{-1}\calR_0^\pm(z) =: S^\pm(z)^{-1}\calR_0^\pm(z),
	\end{equation}
as a bounded operator from $X_{\frac{1}{2}+}$ to $X_{-\frac{1}{2}-}$. Note that $S^\pm(z)$ is boundedly invertible on $X_{-\sigma}$ for any $\sigma>0$. By differentiation, we have
\begin{equation}
	\partial_z \calR^\pm(z) = -S^\pm(z)^{-1}\partial_z\calR_0^\pm(z) \calV S^\pm(z)^{-1} \calR_0^\pm(z) + S^\pm(z)^{-1}\partial_z \calR_0^\pm(z).
\end{equation}
Moreover, as a multiplication operator, $\calV:X_{-\sigma} \to X_{\sigma}$ is bounded for any $\sigma>0$ due to the exponential decay of $\calV$. By Lemma~\ref{lemma: bound on free resolvent}, $\partial_zR_0^\pm(z):X_{\frac{3}{2}+} \to X_{-\frac{3}{2}-}$ is bounded and since the embedding $X_{-\frac{1}{2}-}\subset X_{-\frac{3}{2}-}$ is continuous, we infer the bound \eqref{eqn: derivative perturbed resolvent} by taking composition. By a similar argument, 
\begin{equation}
	\Vert \partial_z^2 \calR^\pm(z) \Vert_{X_{\frac{5}{2}+} \to X_{-\frac{5}{2}-}} \lesssim 1.
\end{equation}
\end{proof}
\begin{proof}[Proof of Proposition~\ref{prop: inter energy estimates}]
By iterating the second resolvent identity, we write the perturbed resolvent as a finite sum
\begin{equation}\label{eqn: twice expanded resolvent}
	\calR^\pm(z) = \calR_0^\pm(z) - \calR_0^\pm(z)\calV\calR_0^\pm(z) + \calR_0^\pm(z)\calV\calR^\pm(z)\calV\calR_0^\pm(z),
\end{equation}
and we write
	\begin{equation}
		e^{it\calH}\chi_{\mathrm{m}}(\calH - \mu I)P_{\mathrm{s}}^+(x,y) = \sum_{j=1}^3 \int_0^\infty e^{itz^2}z\chi_{\mathrm{m}}(z^2)(-1)^{j+1}(\calE_{j}^+(z) - \calE_{j}^-(z))(x,y)dz,
	\end{equation}
with
\begin{equation}
\calE_1^\pm(z) = \calR_0^\pm(z),\quad  \calE_2^\pm(z) = \calR_0^\pm(z)\calV\calR_0^\pm(z),\quad \calE_3^\pm(z) = \calR_0^\pm(z)\calV\calR^\pm(z)\calV\calR_0^\pm(z).
\end{equation}
Hence, to prove \eqref{eqn: unweighted inter energy} and \eqref{eqn: weighted inter energy}, it is sufficient to establish the estimates 
\begin{equation}\label{eqn: estimate for calE_j}
	\sup_{\pm}\sup_{j=1,2,3}\left\vert \int_0^\infty e^{itz^2}z\chi_{\mathrm{m}}(z^2)\calE_{j}^\pm(z)(x,y) \,\ud z \right \vert \lesssim \min\{\vert t \vert^{-\frac{1}{2}},\vert t \vert^{-\frac{3}{2}}\langle x \rangle \langle y \rangle\}.
\end{equation}
The term involving $\calE_1^\pm$ is handled by the earlier Proposition~\ref{prop: free estimate}, while the second term involving $\calE_2^\pm$ can be treated analogously as in Proposition~\ref{prop: high energy estimate}. We refer the reader to \cite[Lemma 3]{04GoldbergSchlag} and \cite[Proposition 3]{07Goldberg} for similar computations. For the term involving $\calE_3^\pm$, we first write
\begin{equation*}
	\calR_0^\pm(z)(s_1,s_2) = \begin{bmatrix}
		e^{\pm i z\vert s_1 \vert} & 0 \\ 0 & 1
	\end{bmatrix}\calG_\pm(z)(s_1,s_2),
\end{equation*}
where the operator $\calG_\pm(z)$ was defined in \eqref{eqn: def of calGpm}. Then, using that the kernel $\calR_0^\pm(z)(x,y)$ is symmetric in $x$ and $y$ variables, and using the matrix identity
\begin{equation}
	\underline{e}_{jj} \begin{bmatrix}
		a_{11} & a_{12} \\ a_{21} & a_{22}
	\end{bmatrix}\underline{e}_{kk} = a_{jk}\underline{e}_j\underline{e}_k^\top, \quad j,k \in \{1,2\},
\end{equation}
we compute the following kernel identity 
\begin{equation*}
	\begin{split}
	\calE_3^\pm(z)(x,y) &= \int_{\R^2} \calR_0^\pm(x,x_1)[\calV\calR^\pm(z)\calV](x_1,y_1)\calR_0^\pm(y,y_1)\,\ud x_1 \,\ud y_1\\
	&= \begin{bmatrix}
		e^{\pm iz \vert x \vert} & 0 \\ 0 & 1
	\end{bmatrix}\int_{\R^2} \calG^\pm(x,x_1)[\calV\calR^\pm(z)\calV](x_1,y_1)\calG^\pm(y,y_1)\,\ud x_1 \,\ud y_1\begin{bmatrix}
	e^{\pm iz \vert y \vert} & 0 \\ 0 & 1
\end{bmatrix}\\
&= e^{\pm iz (\vert x \vert + \vert y \vert)}\langle (\calG^\pm)^*(z)(x,\cdot)\underline{e}_1,\calV \calR^\pm(z)\calV \calG^\pm(z)(y,\cdot)\underline{e}_1\rangle\, \underline{e}_{1}\underline{e}_1^\top \\
&\quad +  e^{\pm iz \vert x \vert}\langle (\calG^\pm)^*(z)(x,\cdot)\underline{e}_2,\calV \calR^\pm(z)\calV \calG^\pm(z)(y,\cdot)\underline{e}_1\rangle\, \underline{e}_{1}\underline{e}_2^\top \\
&\quad +  e^{\pm iz \vert y \vert}\langle (\calG^\pm)^*(z)(x,\cdot)\underline{e}_1,\calV \calR^\pm(z)\calV \calG^\pm(z)(y,\cdot)\underline{e}_2\rangle\, \underline{e}_{2}\underline{e}_1^\top \\
&\quad + \langle (\calG^\pm)^*(z)(x,\cdot)\underline{e}_2,\calV \calR^\pm(z)\calV \calG^\pm(z)(y,\cdot)\underline{e}_2\rangle\, \underline{e}_{2}\underline{e}_2^\top\\
&=:   e^{\pm iz (\vert x \vert + \vert y \vert)} A_{1}^\pm(z,x,y) + e^{\pm iz \vert x \vert} A_{2}^\pm(z,x,y) + e^{\pm iz \vert y \vert}A_3^\pm(z,x,y) + A_4^\pm(z,x,y).
	\end{split}
\end{equation*}
We plug this identity into the left hand side of \eqref{eqn: estimate for calE_j}, and hence it will be sufficient to provide the bounds
\begin{equation}\label{eqn: bound on A_k}
	\left\vert  \int_{0}^\infty e^{itz^2 \pm iz r }z\chi_{\mathrm{m}}(z^2) A_{k}^\pm(z,x,y)\,\ud z \right \vert \lesssim \min\{\vert t \vert^{-\frac{1}{2}},\vert t \vert^{-\frac{3}{2}}\langle r \rangle\}, \quad k\in \{1,\ldots,4\},
\end{equation}
where $r$ can represent $0$ or $\vert x\vert $, $\vert y\vert$, or the sum of both variables. For the case $k=1$, by Lemma~\ref{lemma: van der corput estimate}, we have that
\begin{equation*}
	\left\vert  \int_{0}^\infty e^{itz^2 \pm  iz (\vert x \vert + \vert y \vert) }z\chi_{\mathrm{m}}(z^2) A_{1}^\pm(z,x,y)\,\ud z \right \vert  \leq C  \vert t \vert^{-\frac{1}{2}}\Vert \partial_z \left(z\chi_{\mathrm{m}}(z^2) A_{1}^\pm(z,x,y) \right)\Vert_{L_z^1(\R)}.
\end{equation*}
Since the term $z\chi_{\mathrm{m}}(z^2)$ is smooth and has compact support, we only need to track the derivatives when they fall onto either $\calG^\pm(z)$ or $\calR^\pm(z)$. In any case, thanks to the exponential decay of $\calV$, and the bounds \eqref{eqn: bound on calGpm}, \eqref{eqn: derivative perturbed resolvent} from the previous lemmas, we have the following uniform bound
\begin{equation}\label{eqn: bound on calG}
	\begin{split}
		&\sup_{\pm}\sup_{z \in \supp (\chi_{\mathrm{m}})}\sup_{j,k =1,2} \vert \partial_z \langle (\calG^\pm)^*(z)(y,\cdot)\underline{e}_j, \calV \calR^\pm(z)\calV \calG^\pm(z)(x,\cdot)\underline{e}_k\rangle\vert \\
		&\lesssim\sup_{\pm}\sup_{z \in \supp (\chi_{\mathrm{m}})}\sup_{j,k =1,2} \left\Vert \sqrt{\vert \calV\vert}(x_1) \left(\vert \calR^\pm(z)(x_1,x_2) \vert + \vert \partial_z \calR^\pm(z)(x_1,x_2) \vert\right) \sqrt{\vert \calV\vert}(x_2) \right\Vert_{L_{x_2}^2 \to L_{x_1}^2} \\
		&\quad \cdot \Vert\sqrt{\vert \calV\vert}(x_1) \left(\vert \calG^\pm(z)(x,x_1)\vert + \vert \partial_z\calG^\pm(z)(x,x_1)\vert\right) \underline{e}_j\Vert_{L_{x_1}^2} \\
		&\quad \cdot \Vert \sqrt{\vert \calV\vert}(x_2)\left(\vert \calG^\pm(z)(x_2,y)\vert + \vert \partial_z\calG^\pm(z)(x_2,y)\vert\right) \underline{e}_k\Vert_{L_{x_2}^2}\\
		&\lesssim 1,
	\end{split}
\end{equation}
for all $x,y \in \R$. 

To prove the weighted dispersive estimate, we invoke the stronger estimate in Lemma~\ref{lemma: van der corput estimate}:
\begin{equation*}
	\left\vert  \int_{0}^\infty e^{itz^2 \pm  iz (\vert x \vert + \vert y \vert) }z\chi_{\mathrm{m}}(z^2) A_{1}^\pm(z,x,y)\,\ud z \right \vert  \leq C  \vert t \vert^{-\frac{3}{2}} \left\Vert [\partial_z^2 \pm i(\vert x \vert + \vert y \vert)\partial_z]  \left(\chi_{\mathrm{m}}(z^2) A_{1}^\pm(z,x,y) \right)\right\Vert_{L_z^1(\R)}
\end{equation*}
Here, we can apply the same argument as in \eqref{eqn: bound on calG} for the two derivatives bound on $A_1^\pm$ using the estimates \eqref{eqn: bound on calGpm} and \eqref{eqn: bound on 2nd derivative perturbed resolvent}, whereas the bound on one derivative for $A_1^\pm$ leads to the weights $\langle x \rangle \langle y \rangle$. Thus, we prove \eqref{eqn: bound on A_k} for $k=1$. The other cases follow by the same argument and we are done. 
\end{proof}
Finally, we conclude with the proof of Theorem~\ref{theorem: local decay estimate}.
\begin{proof}[Proof of Theorem~\ref{theorem: local decay estimate}]
	By combining the estimates from Proposition~\ref{prop: low energy bounds}, Proposition~\ref{prop: high energy estimate}, and Proposition~\ref{prop: inter energy estimates}, we have established the bounds 
	\begin{equation*}
		\left\Vert e^{it\calH}P_\mathrm{s}^+ \vecu \, \right \Vert_{L_x^\infty(\R)\times L_x^\infty(\R)} \lesssim \vert t \vert^{-\frac{1}{2}} \Vert \vecu \, \Vert_{L_x^1(\R) \times L_x^1(\R)},
	\end{equation*}
	as well as
	\begin{equation*}
		\left\Vert \langle x \rangle^{-2} (e^{it\calH}P_\mathrm{s}^+  - F_t^+)\vecu \, \right \Vert_{L_x^\infty(\R)\times L_x^\infty(\R)} \lesssim \vert t \vert^{-\frac{3}{2}} \Vert \vecu \, \Vert_{L_x^1(\R) \times L_x^1(\R)},
	\end{equation*}
	for any $\vecu := (u_1,u_2)^\top \in \calS(\R) \times \calS(\R)$ and $\vert t \vert \geq 1$, with $F_t^+$ given by \eqref{eqn: formula F_t^+}. By  Remark~\ref{remark: Ps^-}, we can similarly deduce that the unweighted dispersive estimate for the evolution $e^{it\calH}P_\mathrm{s}^-$ using the identity \eqref{eqn: P_s^-}. On the other hand, for the weighted estimate, we find that the leading contribution to $e^{it\calH}P_{\mathrm{s}}^{-}$ is given by 
	\begin{equation}
		F_t^{-}(x,y) = \sigma_1 F_{-t}^{+}(x,y)\sigma_1 =  -\frac{e^{-it\mu} }{\sqrt{4 \pi i t}}[\sigma_1\Psi(x)][\sigma_3\sigma_1\Psi(y)]^*,
	\end{equation}
	where we used the anti-commutation identity $\sigma_3 \sigma_1 = - \sigma_1 \sigma_3$. Thus, we conclude the local decay estimate \eqref{eqn: theorem local decay estimate} and the formula \eqref{eqn: def of F_t} by setting $F_t := F_t^+ + F_t^-$.	
\end{proof}

	\begin{appendix}
	\section{Neumann series}
\begin{lemma} \label{lemma: Neumann series}
	Let $A$ be an invertible operator and $B$ be a bounded operator satisfying $\Vert B \Vert < \Vert A^{-1} \Vert^{-1}$. Then, $A-B$ is invertible with
	\begin{equation}\label{eqn: appendix neumann1}
		(A-B)^{-1} =A^{-1}\sum_{n=0}^\infty (BA^{-1})^{n} =  A^{-1} + A^{-1}BA^{-1} + A^{-1}BA^{-1}BA^{-1} + \cdots,
	\end{equation}
	and
	\begin{equation}\label{eqn: appendix neumann2}
		\Vert (A-B)^{-1} \Vert \leq (\Vert A^{-1}\Vert^{-1} - \Vert B \Vert)^{-1}.
	\end{equation}
\begin{proof}
By the hypothesis $\Vert B \Vert < \Vert A^{-1}\Vert^{-1}$, we have $\Vert A^{-1}B\Vert <1$. 
Consider the identity
\begin{equation*}
	(A-B)^{-1} = (I-A^{-1}B)^{-1}A^{-1}.
\end{equation*}
The term on the right hand side can be written in the usual Neumann series
\begin{equation*}
	(I-A^{-1}B)^{-1} = \sum_{n=0}^\infty (A^{-1}B)^{n}.
\end{equation*}
Thus, by multiplying $A^{-1}$, we deduce \eqref{eqn: appendix neumann1}. Note that the argument also holds true for $(A-B)^{-1} = A^{-1}(I-BA^{-1})^{-1}$. Now, since we have the estimate
\begin{equation*}
	\Vert (I- A^{-1}B)^{-1}\Vert \leq (1 - \Vert A^{-1}B\Vert)^{-1},
\end{equation*}
we deduce \eqref{eqn: appendix neumann2} by the sub-multiplicative property for operator norms.
\end{proof}
\end{lemma}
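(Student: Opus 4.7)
The plan is to reduce the claim to the classical Neumann series for $(I - T)^{-1}$ when $\|T\| < 1$. First I would factor $A - B = A(I - A^{-1}B)$, using that $A$ is invertible by hypothesis, so the invertibility of $A-B$ is equivalent to that of $I - A^{-1}B$. The hypothesis $\|B\| < \|A^{-1}\|^{-1}$ together with submultiplicativity of the operator norm gives $\|A^{-1}B\| \leq \|A^{-1}\|\,\|B\| < 1$, which is exactly the convergence criterion for the standard Neumann series $(I - A^{-1}B)^{-1} = \sum_{n=0}^\infty (A^{-1}B)^n$ in the operator norm topology.

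Taking inverses in the factorization yields $(A-B)^{-1} = (I - A^{-1}B)^{-1}A^{-1} = \sum_{n=0}^\infty (A^{-1}B)^n A^{-1}$. To match the representation \eqref{eqn: appendix neumann1} stated in the lemma, I would use the elementary identity $(A^{-1}B)^n A^{-1} = A^{-1}(BA^{-1})^n$, which is proved by a one-line induction on $n$ (the base case is trivial and the inductive step just slides one $A^{-1}$ across a factor $B$). This produces $(A-B)^{-1} = A^{-1}\sum_{n=0}^\infty (BA^{-1})^n$ as desired. Alternatively one can factor on the other side as $A-B = (I - BA^{-1})A$ and invert to obtain the same formula directly, which avoids the reshuffling step.

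For the norm bound \eqref{eqn: appendix neumann2}, I would apply the triangle inequality termwise to the series and sum a geometric series:
\begin{equation*}
\|(A-B)^{-1}\| \;\leq\; \|A^{-1}\|\sum_{n=0}^\infty \|BA^{-1}\|^n \;\leq\; \frac{\|A^{-1}\|}{1 - \|B\|\,\|A^{-1}\|} \;=\; \frac{1}{\|A^{-1}\|^{-1} - \|B\|}.
\end{equation*}
Since this is a textbook perturbation-of-inverses statement and each step is a direct consequence of the Banach contraction principle, there is no genuine obstacle. The only mild bookkeeping point is choosing on which side to factor $A$ so that the series is presented in the form $A^{-1}\sum_n (BA^{-1})^n$ rather than $\sum_n (A^{-1}B)^n A^{-1}$; these are equivalent, but the former is what the statement records.
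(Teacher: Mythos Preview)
Your proposal is correct and follows essentially the same approach as the paper: factor $A-B = A(I-A^{-1}B)$, invoke the standard Neumann series for $(I-A^{-1}B)^{-1}$ using $\|A^{-1}B\|<1$, and deduce the norm bound via submultiplicativity. The only cosmetic difference is that you spell out the reshuffling identity $(A^{-1}B)^nA^{-1}=A^{-1}(BA^{-1})^n$, whereas the paper simply remarks that one may equally factor as $(A-B)^{-1}=A^{-1}(I-BA^{-1})^{-1}$.
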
		 
	\end{appendix}

\bibliographystyle{alpha}
	\bibliography{references}

\end{document}